\documentclass[a4paper,10pt]{article}
\usepackage{geometry}
\usepackage[tbtags]{amsmath}
\usepackage{amssymb}
\usepackage{amsthm}
\usepackage{fancyhdr}
\usepackage{comment}
\usepackage{latexsym}
\usepackage{mathrsfs}
\usepackage{xcolor}
\usepackage{wasysym}
\usepackage{fancyhdr}
\usepackage{float}
\usepackage{graphicx}
\usepackage[numbers,sort&compress]{natbib}
\usepackage{pict2e,color}
\usepackage{tikz}
\usepackage{float}
\usepackage{caption}
\usetikzlibrary{positioning}
\usepackage{tkz-graph}
\usetikzlibrary{arrows,shapes,chains}
\usepackage{enumerate}
\usepackage{verbatim}
\allowdisplaybreaks[4]

\newtheorem{theorem}{\bf Theorem}[section]
\newtheorem{lemma}[theorem]{Lemma}

\newtheorem{corollary}[theorem]{Corollary}
\newtheorem{conjecture}[theorem]{Conjecture}
\newtheorem{claim}[theorem]{\indent Claim}
\theoremstyle{definition}
\newtheorem{definition}[theorem]{Definition}
\newtheorem{proposition}[theorem]{Proposition}

\newtheorem{example}[theorem]{Example}
\newtheorem{remark}[theorem]{Remark}

\begin{document}

\title{Hypergraphs without Subgraphs of Given Connectivity}
\author{
Jie Ma\footnote{School of Mathematical Sciences, University of Science and Technology of China, Hefei 230026, China.} \footnote{  Yau Mathematical Sciences Center, Tsinghua University, Beijing 100084, China.}~~~~~~
Shengjie Xie\footnotemark[1]~~~~~~
Zhiheng Zheng\footnotemark[1]
}
\date{\today}
\maketitle
\begin{center}
\begin{minipage}{120mm}

\begin{abstract}
In this paper, we study the problem of determining the maximum number $F_r(n,k)$ of edges in an $n$-vertex $r$-uniform hypergraph that contains no $(k+1)$-connected subgraph. 
The graph case, initiated by Mader, is a classical problem in graph theory that remains open. We first establish a limit theorem for $F_r(n,k)$ for all $k\ge r\ge 2$. As a consequence, we prove for the first time that, in Mader's problem (i.e., $r=2$), there exists a constant $c_k>0$ such that $F_2(n,k)=c_k n+O_k(1)$, and, for every $r\ge 3$, we determine $F_r(n,k)$ up to an $O(n)$ error term, thereby identifying its leading asymptotic term.
We also address a related question of Carmesin by establishing a tight bound for $r$-uniform hypergraphs with no $(k+1)$-connected subgraph on more than $Ck$ vertices for any constant $C>2$ and sufficiently large $r$, and further obtain an asymptotically tight bound in the case $C=2$. Our proof combines the separator tree method introduced by Carmesin with several new combinatorial and optimization techniques, and we conclude with related remarks and open problems.
\end{abstract}
\end{minipage}
\end{center}

\section{Introduction}
A fundamental question in graph theory asks for the maximum possible number of edges in an $n$-vertex graph that contains no $(k+1)$-connected subgraph. In 1972, Mader~\cite{2} proved that every graph with average degree at least $4k$ contains such a subgraph (on more than $2k$ vertices).
Mader's result has inspired extensive subsequent research and has found applications in diverse areas, including bipartite minors~\cite{5}, girth problems~\cite{8}, the relationship between connectivity and chromatic number~\cite{5,9,10,11}, and the study of highly connected substructures in Tur\'an type problems~\cite{4} and graph minors~\cite{7}.
In 1979, Mader~\cite{10} proved a better bound than $4k$ for large graphs
and formulated the following famous conjecture:

\begin{conjecture}[\cite{10}, Mader]\label{conj:graph}
For any integer \(k\ge 2\) and all sufficiently large \(n\), every \(n\)-vertex graph without a \((k+1)\)-connected subgraph satisfies \(e(G) \le \binom{n}{2}-\binom{n-k}{2} + \frac{k-1}{2}(n-2k)=\frac{3}{2}\bigl(k-\frac{1}{3}\bigr)(n-k).\)
\end{conjecture}

\noindent In the same paper, Mader gave the following construction $G_{q,k}$ achieving the bound. Let $k,q$ be integers with $n=qk$. Partition the vertex set into $V_0,V_1,\dots,V_{q-1}$, each of size $k$, where $V_0$ is independent and $V_1,\dots,V_{q-1}$ induce cliques. Join every vertex of $V_0$ to every vertex of $\bigcup_{i=1}^{q-1} V_i$, and add no other edges.
Twenty years later, Yuster~\cite{13} improved Mader's bound~\cite{10} to $e(G) \le \frac{193}{120}k(n-k)$. 
The current best result, due to Bernshteyn and Kostochka~\cite{1}, states that $e(G) \le \frac{19}{12}k(n-k)$. 
Improving the original result of Mader~\cite{2}, Carmesin~\cite{3} proved that every graph with average degree at least $\frac{10}{3}k$ contains a $(k+1)$-connected subgraph on more than $2k$ vertices, and showed that the bound $\frac{10}{3}k$ is sharp under the requirement that the subgraph has more than $2k$ vertices. 
(The significance of the cut-off at $2k$ vertices will be discussed in the concluding remarks.)


In this paper, we study hypergraph analogues of Mader's result and conjecture~\cite{2,10}. 
All hypergraphs considered are $r$-uniform for some fixed $r\ge 2$. 
A natural question in this direction is the following:
\begin{quote}
Determine the maximum number $F_r(n,k)$ of edges in an $n$-vertex graph or $r$-uniform hypergraph that contains no $(k+1)$-connected subgraph.
\end{quote}
Here and throughout the paper, a hypergraph $H$ is \emph{connected} if for any $x,y\in V(H)$ there exists a sequence $x=a_0,a_1,\dots,a_s=y$ such that each pair $\{a_{i-1},a_i\}$ is contained in a common hyperedge, and it is \emph{$(k+1)$-connected} if it remains connected after the deletion of any $k$ vertices.
Hypergraph connectivity has been studied extensively, including $j$-connectivity in random hypergraphs~\cite{14}; extremal results for vertex-maximal uniform hypergraphs~\cite{15}; analytic connectivity conditions~\cite{16}; minimum cut computation~\cite{17,18}; and applications to dynamical processes~\cite{19}. 
Despite these developments, deterministic density conditions guaranteeing a $(k+1)$-connected subgraph in hypergraphs remain largely unexplored.

Carmesin explicitly raised the question of extending his result in~\cite{3} to hypergraphs (see Question~8.1 in~\cite{3}).
To formalize this, for any $c\ge 0$ and $r\ge 2$, let $\mathcal{F}_{c,r}(n,k)$ denote the family of $n$-vertex $r$-uniform hypergraphs containing no $(k+1)$-connected subgraph on more than $ck+k$ vertices.
Carmesin's question is to determine
\[
\max_{H\in \mathcal{F}_{c,r}(n,k)} e(H) \mbox{~~for $c=1$}.
\]

Our main results indicate that the above questions for $r \ge 3$ differ substantially from the graph case ($r=2$).
Suppose that $H$ is an $r$-uniform hypergraph that is not $(k+1)$-connected. Then there exists a separator of size $k$ that separates the remaining vertices into two parts.
Edges intersecting the separator and both parts do not affect the connectivity of $H$; we emphasize that such edges (called \emph{bonded edges}) do not appear in the graph case.
Moreover, the set of all edges intersecting the separator of $k$ vertices forms a hypergraph containing no $(k+1)$-connected subgraph (of any size), 
which provides a lower bound $\binom{n}{r}-\binom{n-k}{r}$ for the above question of Carmesin.
It turns out that this lower bound constitutes the main term in $\max_{H\in \mathcal{F}_{c,r}(n,k)} e(H)$ for any $c\ge 0$.
This motivates the following notion, which is the main focus of the present paper.
We define the \emph{surplus} by
\begin{equation}\label{eq:surplus}
f_{c,r}(n,k) = \max_{H\in \mathcal{F}_{c,r}(n,k)} e(H) - \binom{n}{r} + \binom{n-k}{r},
\end{equation}
Note that $F_r(n,k)=\binom{n}{r}-\binom{n-k}{r}+f_{0,r}(n,k)$.
For convenience, we further define the {\emph{relative surplus}}
\begin{equation}\label{eq:re_l(S)urplus}
g_{c,r}(n,k) = \frac{r! \cdot f_{c,r}(n,k)}{k^{r-1}(n-k)}.
\end{equation}

We now present our main results. 
Our first result shows that for any \(k\geq r\geq 2\) and \(c\geq 0\), the limit \(\lim_{n\to\infty} g_{c,r}(n,k)\) exists.
This determines the function $f_{c,r}(n,k)$ up to $O(n)$.

\begin{theorem}\label{thm:limit}
For any \(k\geq r\geq 2\) and \(c\geq 0\), the limit \(g^k_{c,r}:=\lim_{n\to\infty} g_{c,r}(n,k)\) exists. 
\end{theorem}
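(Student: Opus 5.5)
We will show the limit exists through a Fekete-type (subadditivity) argument applied to $f_{c,r}(n,k)$ viewed as a function of $n-k$, after first checking that $g_{c,r}(n,k)$ stays bounded.

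\emph{Step 1: boundedness.} For each fixed $k$ we need $0\le f_{c,r}(n,k)=O_k(n)$. The lower bound $f_{c,r}\ge 0$ comes from the construction in the introduction. For the upper bound we argue by induction on $n$, cutting along a separator of size $k$. Let $H\in\mathcal F_{c,r}(n,k)$. If $H$ is $(k+1)$-connected, then $n\le ck+k$ and the bound is trivial. Otherwise there is a separator $S$ with $|S|=k$ that splits $V\setminus S$ into nonempty sets $A$ and $B$. Every edge then lies in $A\cup S$, lies in $B\cup S$, or is a bonded edge, meaning it meets $S$, $A$ and $B$. Write $a=|A|$ and $b=|B|$. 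The number of bonded edges plus the edges inside $S$ can be compared against $\binom{n}{r}-\binom{n-k}{r}$ minus the corresponding main terms for $H[A\cup S]$ and $H[B\cup S]$. Both of these induced subgraphs lie in the respective families. A direct computation then gives
\[
f(a+b+k)\le f(a+k)+f(b+k)+E(a,b),
\]
where $E(a,b)$ is an explicit polynomial error. The heart of this step is checking that $E$ is nonpositive, or is $O_k(1)$ in a way that is absorbed, once $a,b$ are large. The reason is that the edges meeting $S$ are counted twice, while the edges meeting both $A$ and $B$ are missing from $\binom{n}{r}-\binom{n-k}{r}$ only through terms of order $k\cdot ab\cdot(\dots)$. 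This yields $f_{c,r}(n,k)\le C_k(n-k)$.

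\emph{Step 2: superadditivity from gluing.} Write $m=n-k$. Take extremal hypergraphs $H_1$ on $S\cup A$ and $H_2$ on $S\cup B$ that share the same $k$-set $S$. Form their union and add every $r$-set that meets $S$, $A$ and $B$ simultaneously. We must check that the result contains no $(k+1)$-connected subgraph on more than $ck+k$ vertices. Any $(k+1)$-connected subgraph $K$ that meets both $A$ and $B$ would be separated by $K\cap S$, which has size at most $k$, because the added edges do not join $A$ to $B$ once $S$ is deleted. Hence $K$ lies inside $A\cup S$ or inside $B\cup S$, and there it is excluded by the choice of $H_1$ or $H_2$. Counting the edges, the main terms telescope exactly: $\binom{n}{r}-\binom{n-k}{r}$ equals the two main terms, minus the $r$-sets inside $S$, plus the $r$-sets meeting $S$, $A$ and $B$. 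This gives
\[
f(m_1+m_2)\ \ge\ f(m_1)+f(m_2)-\binom{k}{r}.
\]
So $\tilde f(m):=f(m)-\binom{k}{r}$ is superadditive in $m$. Since Step 1 gives $\tilde f(m)/m$ bounded above, Fekete's lemma shows that $\lim_{m\to\infty}\tilde f(m)/m=\sup_m \tilde f(m)/m$ exists and is finite. Multiplying by $r!/k^{r-1}$, and noting that the constant $\binom{k}{r}$ disappears after dividing by $m$, we obtain the limit of $g_{c,r}(n,k)$.

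\emph{Main obstacle.} Step 2 is clean. The part I expect to be hard is Step 1, proving $f=O_k(n)$, because the bonded edges give $\Theta(k\,a\,b\,n^{r-3})$ extra edges, which is superlinear when $r\ge3$. The plan is to show these are exactly compensated by the deficit in the edges avoiding $S$. Concretely, the $r$-sets in $A\cup B$ that meet both $A$ and $B$ are all absent, and there are far more of them than bonded edges. I would first handle this by bounding the edges of $H$ that avoid $S$ by $\binom{a}{r}+\binom{b}{r}$ plus the recursive surplus terms. If the recursion becomes messy, the fallback is Carmesin's separator tree: decompose $H$ into pieces of size $O_k(1)$ glued along $k$-sets, and charge each piece $O_k(1)$ surplus.
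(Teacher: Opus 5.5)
Your overall strategy (a linear upper bound plus Fekete's lemma) works and differs from the paper's route. However, the gluing in Step 2 needs a repair, and the difficulty you expect in Step 1 is not actually there.

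\textbf{The gluing step.} As written, ``form their union'' is not justified. $H_1$ and $H_2$ are unrelated extremal hypergraphs, so $H_1[S]$ and $H_2[S]$ need not agree. Then $H[A\cup S]=H_1\cup H_2[S]$ may strictly contain $H_1$. Adding edges can create a $(k+1)$-connected subgraph, so the sentence ``there it is excluded by the choice of $H_1$'' fails.

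The fix is to keep only $H_1[S]\cap H_2[S]$ inside $S$. Then $H[A\cup S]\subseteq H_1$ and $H[B\cup S]\subseteq H_2$. Since $(k+1)$-connectivity survives adding edges, $\mathcal{F}_{c,r}$ is closed under deleting edges, and your separation argument applies.

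Write $M(m)=\binom{m+k}{r}-\binom{m}{r}$ and let $\beta$ be the number of $r$-sets meeting $S$, $A$ and $B$. Your telescoping identity $M(a+b)=M(a)+M(b)-\binom{k}{r}+\beta$ then gives
\[
f_{c,r}(a+b+k,k)\ \ge\ f_{c,r}(a+k,k)+f_{c,r}(b+k,k)+\binom{k}{r}-\bigl|H_1[S]\cup H_2[S]\bigr|\ \ge\ f_{c,r}(a+k,k)+f_{c,r}(b+k,k).
\]
This is exact superadditivity, slightly better than what you claimed. The paper avoids the mismatch altogether by gluing $m$ copies of a single extremal $H$ along the same $S$.

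\textbf{Step 1.} The bonded edges cause no trouble, because by the same identity they are already part of $\binom{n}{r}-\binom{n-k}{r}$. For $H$ with a cut $(S,A,B)$ we have $e(H)=e(H[A\cup S])+e(H[B\cup S])-e(H[S])+\beta_H$ with $\beta_H\le\beta$. Both induced pieces lie in the appropriate families, so
\[
f_{c,r}(a+b+k,k)\le f_{c,r}(a+k,k)+f_{c,r}(b+k,k)+\binom{k}{r}.
\]
Use induction with the trivial base $f_{c,r}(m+k,k)\le\binom{m}{r}$ for $m\le ck$. This yields $f_{c,r}(m+k,k)\le\bigl(\binom{ck}{r}+\binom{k}{r}\bigr)m-\binom{k}{r}$. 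That is exactly the paper's bound $f\le n\binom{ck}{r}+n\binom{k}{r}$ from Lemma~\ref{lem:edgenum_septree}, done one cut at a time. No compensation argument and no fallback are needed.

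\textbf{Comparison of routes.} The paper proves three things: boundedness, monotonicity of $f_{c,r}(n+k,k)$ in $n$ (by adding one vertex attached through $S$), and $f_{c,r}(mn+k,k)\ge m\,f_{c,r}(n+k,k)$. It then feeds these into the ad hoc Lemma~\ref{lem:limit_exist}.

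Your superadditivity over arbitrary pairs lets you invoke Fekete's lemma directly and makes the monotonicity step unnecessary. With the exact superadditivity above, it also identifies the limit as $\sup_{n>k} g_{c,r}(n,k)$. The price is gluing two different hypergraphs, which is exactly where the edges inside $S$ must be handled with care.
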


In this limit formulation, the asymptotic version of Conjecture~\ref{conj:graph} asserts that $g^k_{0,2} = 1 - \frac{1}{k}$, while Carmesin's result~\cite{3} establishes that $g^k_{1,2} = \frac{4}{3} - \frac{1}{k}$ for all $k \ge 2$. As an immediate corollary, 
for any $n>k\geq r=2$, there exists a constant $c_k>0$ such that
\[
F_2(n,k)=c_k n+O_k(1),
\]
and for any $n > k \ge r \ge 3$, we determine the leading term of $F_r(n,k)$ as
\begin{equation}\label{equ:F_r}
    F_r(n,k)=\binom{n}{r}-\binom{n-k}{r}+O_{r,k}(n).
\end{equation}

The next result provides a general upper bound for the relative surplus. 

\begin{theorem}\label{main} 
Let \(k\geq r\geq 3\), \(c\geq 1\) and \(n\geq (\frac{2^rc}{2^r-1}+1)k\). 
Then 
\begin{equation}\label{eq:main-1} 
g_{c,r}(n,k)\leq c^{r-1}+\frac{1}{c}\Bigl(1-\frac{1}{2^{r+1}-2}\Bigr)-\frac{k}{n-k},\quad \mbox{and consequently, } g^k_{c,r}\leq c^{r-1}+\frac{1}{c}\Bigl(1-\frac{1}{2^{r+1}-2}\Bigr). 
\end{equation} 
\end{theorem}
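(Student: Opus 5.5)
We prove the bound by induction on $n$, using a separator decomposition in the spirit of Carmesin's separator tree. The inequality is equivalent to
\[
f_{c,r}(n,k)\le \frac{k^{r-1}}{r!}\Bigl[\Bigl(c^{r-1}+\tfrac1c\bigl(1-\tfrac{1}{2^{r+1}-2}\bigr)\Bigr)(n-k)-k\Bigr]=:\Phi(n).
\]
This is an affine function of $n$ with slope $\alpha k^{r-1}/r!$, where $\alpha=c^{r-1}+\frac1c(1-\frac1{2^{r+1}-2})$.

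Let $H\in\mathcal{F}_{c,r}(n,k)$ have the maximum number of edges. If $H$ itself is $(k+1)$-connected, then $n\le ck+k$. This contradicts the hypothesis $n\ge(\frac{2^rc}{2^r-1}+1)k$, which exceeds $(c+1)k$. Hence $H$ has a separator $S$ with $|S|=k$ splitting $V(H)\setminus S$ into nonempty parts $A$ and $B$, with no edge meeting both $A$ and $B$ while avoiding $S$.

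Write $H_1=H[A\cup S]$ and $H_2=H[B\cup S]$, and let $n_1=|A|+k$ and $n_2=|B|+k$, so $n=n_1+n_2-k$. The edges of $H$ fall into three classes:
\begin{itemize}
\item edges of $H_1$;
\item edges of $H_2$;
\item bonded edges, which meet $S$, $A$ and $B$; edges inside $S$ are counted in both $H_1$ and $H_2$ and must be subtracted once.
\end{itemize}
The key identity is
\[
\binom{n}{r}-\binom{n-k}{r}=\Bigl[\binom{n_1}{r}-\binom{n_1-k}{r}\Bigr]+\Bigl[\binom{n_2}{r}-\binom{n_2-k}{r}\Bigr]-\binom{k}{r}+\#\{\text{$r$-sets meeting $S,A,B$}\}.
\]
Consequently, the surplus of $H$ is at most the sum of the surpluses of $H_1$ and $H_2$ plus $\binom kr$. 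Both $H_1$ and $H_2$ lie in $\mathcal{F}_{c,r}(n_i,k)$, since subgraphs inherit the forbidden property. This yields the recursion
\[
f(n)\le f(n_1)+f(n_2)+\binom kr .
\]
Since $\Phi$ is affine, $\Phi(n_1)+\Phi(n_2)=\Phi(n)-\frac{k^{r-1}}{r!}(\alpha k+k)+\ldots$. Each split therefore loses roughly $\frac{k^{r-1}}{r!}(\alpha+1)k$, which is much larger than $\binom kr\le k^r/r!$. So induction closes whenever both $n_1$ and $n_2$ are at least the threshold $n_0=(\frac{2^rc}{2^r-1}+1)k$.

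The whole difficulty is the base regime: pieces with fewer than $n_0$ vertices. For these we need a direct bound on $f_{c,r}(m,k)$ valid for all $m$, slightly weaker than $\Phi$. First, we choose the separation cleverly: take $S$ with a smallest possible side $A$, or iterate to a separator tree whose leaves are $(k+1)$-connected pieces of size at most $(c+1)k$. For a leaf piece $L$ with $\ell=|L|-k$ new vertices attached along a $k$-set, the edges inside $L$ that are not already counted by the $\binom{n}{r}-\binom{n-k}{r}$ main term number at most $\binom{\ell}{r}$ plus the $S$-touching edges. Against the main-term budget, this gives a surplus of at most about $\frac{k^{r-1}}{r!}\,c^{r-1}\ell$, using $\binom{ck}{r}\approx \frac{(ck)^r}{r!}$ and $\ell\le ck$. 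This is where the term $c^{r-1}$ comes from. The term $\frac1c(1-\frac{1}{2^{r+1}-2})$ should come from the bonded edges at each tree node, which are amortized over at least $ck$ new vertices. The saving factor $1-\frac{1}{2^{r+1}-2}$ reflects the fact that, for a node whose separator is shared by several branches, a fraction of $r$-sets meeting $S$ and two branches cannot all be counted. This is obtained by an averaging or convexity argument over the $2^r-1$ ways an $r$-set can distribute among the branches, and it matches the constant $\frac{2^r}{2^r-1}$ appearing in the threshold on $n$.

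I expect this amortization to be the main obstacle. The task is to set up a weight/charging scheme on the separator tree that assigns each leaf and each internal node a surplus proportional to the number of new vertices it contributes, with the exact constants $c^{r-1}$ and $\frac1c(1-\frac1{2^{r+1}-2})$ and the additive $-k$. I would first try an LP/convexity optimization over the sizes of the branches at each node. Finally, the limit statement follows by letting $n\to\infty$, since $\frac{k}{n-k}\to0$, and Theorem~\ref{thm:limit} guarantees that $g^k_{c,r}$ exists.
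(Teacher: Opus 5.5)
There is a genuine gap. It sits exactly where you put the "main obstacle," and it is not only a base-case problem: the decoupled recursion $f(n)\le f(n_1)+f(n_2)+\binom{k}{r}$ is too lossy to prove the bound at all.

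First, the slack is miscomputed. Since $(n_1-k)+(n_2-k)=n-k$, you get $\Phi(n)-\Phi(n_1)-\Phi(n_2)=\frac{k^r}{r!}$ exactly. This beats $\binom{k}{r}$ only by $O_r(k^{r-1})$, so there is no room to absorb a "slightly weaker" bound on small pieces.

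Second, the recursion already fails at $n=(2c+1)k$, which lies in the range of the theorem. Let $H$ consist of two complete $r$-graphs on $(c+1)k$ vertices sharing a $k$-set $S$, plus all bonded edges. Then $S$ is its only separator, and both pieces have $f=\binom{ck}{r}$. Your recursion gives $2\binom{ck}{r}+\binom{k}{r}=\frac{k^r}{r!}(2c^r+1)-O_{c,r}(k^{r-1})$. This exceeds $\Phi((2c+1)k)=\frac{k^r}{r!}\bigl(2c^r+1-\frac{1}{2^r-1}\bigr)$ once $k$ is large. Iterated over a tree of atoms of size $(c+1)k$, the recursion yields relative surplus about $c^{r-1}+\frac{1}{c}$, strictly above the target.

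The loss comes from bounding $\bar e(S)$ by $\binom{k}{r}$ independently of $H_1,H_2$. In fact every anti-edge inside $S$ is also an anti-edge of both sides; for the two-atom shape the surplus is at most $2\binom{ck}{r}-\bar e(S)$.

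Your heuristics for the constants are also off. Bonded edges contribute nothing to the surplus: in your own identity they cancel against the main term, and bonded anti-edges only decrease it. Likewise, $2^r-1$ is not an average over the ways an $r$-set meets the branches.

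What is missing is the paper's free anti-edge accounting. One orients the separator tree so that the small branch of $S$ is the one with fewer normal atoms, i.e.\ atoms of size at least $\frac{4k}{3}$. An anti-edge inside $S$ is then counted for $S$ only if it is free, roughly meaning its vertices get separated further down inside the small branch. For a fixed anti-edge, the separators where it is not free are injectively charged to atoms containing it or separators it is bonded to (Lemma~\ref{lem:edge_f(S)}). This gives $f(S)\le\binom{k}{r}-\ell(S)\binom{k/\ell(S)}{r}$.

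An induction over normal trees then yields $\frac{t}{ck}\binom{ck}{r}+\bigl(\frac{t}{ck}-1\bigr)\binom{k}{r}-\frac{t}{2ck}\sum_{i\ge 0}\binom{k/2^i}{r}+f(k,r,L)$. Here the $2^r-1$ is the geometric series $\sum_i 2^{-ir}$, coming from separators whose vertices are forced into $2^i$ classes.

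Tiny atoms (size below $\frac{4k}{3}$) can beat this normal-tree bound, so they must be deleted and controlled separately. That is the role of the data $m(A,S)$, Proposition~\ref{free2} and Claims~\ref{claim2} and~\ref{claim1}. This costs an extra $\frac{t}{2ck}\binom{k}{r}$, which is what turns $\frac{1}{2c}\bigl(1-\frac{1}{2^r-1}\bigr)$ into $\frac{1}{c}\bigl(1-\frac{1}{2^{r+1}-2}\bigr)$. The hypothesis $n\ge\bigl(\frac{2^rc}{2^r-1}+1\bigr)k$ is used to absorb $f(k,r,1)\le\frac{2^{r-1}}{2^r-1}\binom{k}{r}$ into $\frac{t}{2ck}\binom{k}{r}$.

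None of these ingredients appears in your plan, and the charging scheme with the exact constants is left open. As it stands, the proposal does not prove the statement.
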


Our third result determines the precise value of the limit $g^k_{c,r}$ for any $c\ge 1$ (asymptotically in the case $c=1$) assuming $k\gg r$ both are sufficiently large, thereby providing an answer to Carmesin's question.

\begin{theorem}\label{0}
Let $r\geq 3$. Then we have the following:
\begin{itemize}
\item If \(c = 1\), then there exist infinitely many \(k \in \mathbb{N}\) such that
    \begin{equation}\label{eq:0-1}
     g^k_{1,r} = 2 - o_r(1).
     \end{equation}
\item If \(c \ge (2r)^{1/(r-1)}\), then for infinitely many \(k \in \mathbb{N}\) the exact value of \(g_{c,r}^k\) is determined (see \eqref{eq:gcrknm} for the precise expression). Moreover, for all sufficiently large $k\gg r$, we have the asymptotic estimate:  
\begin{equation}\label{eq:main-2}
g^k_{c,r} = c^{r-1} + \frac{1}{2c}\left(1 - \frac{1}{2^{r} - 1}\right) - O_r\left(\frac{1}{k}\right).
\end{equation} 
\end{itemize}
\end{theorem}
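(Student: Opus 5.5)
The theorem has two kinds of content: upper bounds on $g^k_{c,r}$ and matching lower-bound constructions. For the upper bounds we lean on Theorem~\ref{main}, refined where needed. For $c=1$, Theorem~\ref{main} already gives $g^k_{1,r}\le 1+(1-\frac{1}{2^{r+1}-2})=2-\frac{1}{2^{r+1}-2}=2-o_r(1)$. For $c\ge(2r)^{1/(r-1)}$, however, Theorem~\ref{main} gives the constant $\frac1c(1-\frac{1}{2^{r+1}-2})$, while \eqref{eq:main-2} requires $\frac{1}{2c}(1-\frac{1}{2^r-1})$. So the upper bound must be sharpened in the regime of large $c$, and the $c=1$ case reduces mostly to constructing extremal examples. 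Throughout, Theorem~\ref{thm:limit} lets us compute $g^k_{c,r}$ along any convenient sequence $n\to\infty$.

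\textbf{Lower bounds.} The plan is a hypergraph analogue of Mader's $G_{q,k}$, organized along a separator tree. Fix a $k$-set $V_0$ and take all $\binom{n}{r}-\binom{n-k}{r}$ edges meeting $V_0$. Partition the remaining vertices into blocks $V_1,\dots,V_{q}$. Inside each $V_0\cup V_i$-type piece we add the edges avoiding $V_0$, choosing block sizes so that no $(k+1)$-connected subgraph exceeds $ck+k$ vertices. The natural block size is about $ck$. Edges inside a block of size $ck$ avoiding $V_0$ number about $\binom{ck}{r}\approx (ck)^r/r!$. Per $ck$ vertices this contributes $c^{r-1}k^{r-1}/r!$ per vertex, which is the $c^{r-1}$ term.

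The $\frac{1}{2c}(1-\frac1{2^r-1})$ term should come from bonded edges. A second level of separators, of size $k$ between consecutive blocks, permits edges that meet the separator and two sides. We then optimize the fraction of such edges. The count $2^r-1$ suggests counting nonempty patterns of how an edge distributes across two halves, which should be made precise by choosing half-blocks of size $ck/2$. We intend to take $k$ along a sequence, for example divisible by suitable powers of $2$ and with $ck$ integral, so that everything is an exact integer; this gives the ``infinitely many $k$'' and the exact formula. For $c=1$ the same recursive construction with many levels yields a surplus approaching $2k^{r-1}(n-k)/r!$, since the geometric series of bonded contributions sums to $1-o_r(1)$.

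\textbf{Upper bound for large $c$.} We would rerun the separator-tree argument behind Theorem~\ref{main}. Decompose $H$ by repeatedly splitting along $k$-separators until each leaf has at most $ck+k$ vertices. Assign each edge to a node of the tree, and bound the number of edges per leaf vertex by a weighted optimization over leaf sizes. The hypothesis $c\ge(2r)^{1/(r-1)}$ should make the $c^{r-1}$ leaf term dominate. As a result the optimum is attained when leaves are as large as allowed, that is $ck$, and the LP/convexity bound then gives the halved bonded contribution.

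\textbf{Main obstacle.} The hardest step is this sharpened upper bound: the charging of bonded edges must exactly match the construction's $\frac{1}{2c}(1-\frac1{2^r-1})$ up to $O_r(1/k)$. We would first try a convexity argument in the leaf sizes $x_i$. It should show that each leaf of size $x k$ contributes at most $\binom{xk}{r}$ plus bonded edges bounded by $\frac{k^{r-1}}{r!}\cdot\frac{x}{2}(1-\frac{1}{2^r-1})$, with the threshold on $c$ ensuring monotonicity in $x$. Summing over leaves then closes the bound.
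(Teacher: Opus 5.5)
\textbf{Verdict: genuine gaps.} Your $c=1$ upper bound via Theorem~\ref{main}, your use of Theorem~\ref{thm:limit}, and your observation that Theorem~\ref{main} is too weak for large $c$ all match the paper. The $c=1$ lower bound, however, has a genuine gap, and the large-$c$ part identifies the wrong mechanism.

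\textbf{The $c=1$ lower bound.} You claim that ``the same recursive construction with many levels'' reaches surplus $2k^{r-1}(n-k)/r!$. This contradicts your own accounting: the construction designed to realize $c^{r-1}+\frac{1}{2c}(1-\frac{1}{2^r-1})$ gives only $\frac32-\frac{1}{2(2^r-1)}$ at $c=1$. No variant of it can do better. Its atoms have $2k\ge\frac{4k}{3}$ vertices, so its separator tree is normal. Theorem~\ref{normal2} combined with (A.1) caps the relative surplus of every hypergraph with a normal separator tree at $\frac32-\frac{1}{2(2^r-1)}$ when $c=1$.

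The missing idea is tiny atoms (Example~\ref{ex2}). Glue onto each complete $2k$-vertex atom a complete atom on $k+pk$ vertices along a $k$-set. Then build each level-$0$ separator from $pk$ vertices of the tiny atom and $(1-p)k$ vertices of the big one. In the normal construction the level-$0$ separators carry $\binom{k}{r}-1\cdot\binom{k}{r}=0$ free anti-edges. Here they carry $\binom{k}{r}-\binom{(1-p)k}{r}-\binom{pk}{r}\approx\binom{k}{r}$ once $(1-p)^r\le 2/r$, i.e.\ $p\approx\frac{\log r}{r}$. That supplies the missing $\frac12$ at a cost of $O(\frac{\log r}{r})$.

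\textbf{The large-$c$ part.} Your template cannot produce the extra term. As long as every $r$-set meeting a fixed $k$-set $V_0$ is an edge, each component $C$ of $H-V_0$ has at most $ck$ vertices, since otherwise $V_0\cup C$ is $(k+1)$-connected. So the surplus is at most $\frac{n-k}{ck}\binom{ck}{r}$, i.e.\ $g\le c^{r-1}$.

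The extra term does not come from bonded edges either. They enter Lemma~\ref{lem:edgenum_septree} only through the penalty $-\bar\beta(S)$, and all are present in the extremal examples. It comes from anti-edges \emph{inside} separators: since $e(P)=e(A)+e(B)+\beta(S)-e(S)$, a sparse separator is subtracted less. A free such anti-edge cannot lie in a single atom of the small branch. Hence a separator with $\ell$ normal atoms there carries at most $\binom{k}{r}-\ell\binom{k/\ell}{r}$ of them (Lemma~\ref{normal1}).

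Accordingly, Example~\ref{ex1} is a binary hierarchy. The level-$i$ separator takes $k/2^i$ fresh vertices from each of $2^i$ atoms, and then $m$ copies are glued along an independent $k$-set. The factor $2^r-1$ comes from the level sum $\sum_{i\ge0}\binom{k/2^i}{r}\approx\frac{2^r}{2^r-1}\binom{k}{r}$, not from patterns over halves of size $ck/2$.

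For the upper bound, your per-leaf convexity bound would charge each leaf a separator bonus proportional to its size, at the rate the construction realizes. That is false for tiny leaves. A tiny atom whose $m$ tiny vertices are assigned to a reach-$1$ separator $S$ can raise $f(S)$ from $0$ to $\binom{k}{r}-\binom{k-m}{r}$. That is about $\frac{r}{k}\binom{k}{r}$ per tiny vertex, against the rate $\frac{1}{2ck}(1-\frac{1}{2^r-1})\binom{k}{r}$; Example~\ref{ex2} exploits exactly this.

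The paper proceeds in three steps:
\begin{itemize}
\item It deletes tiny atoms and reduces to Theorem~\ref{normal2}. Its global induction, using $f(k,r,L)$ and $X(S)\ge0$, is where $\frac{2^r}{2^r-1}$ arises.
\item Via the assignment $m(A,S)$, Lemma~\ref{technical data} and Proposition~\ref{free2}, it shows each tiny vertex adds at most $\frac{2r}{k}\binom{k}{r}$.
\item The hypothesis $c\ge(2r)^{1/(r-1)}$ makes this at most the per-vertex budget $\frac{1}{ck}\binom{ck}{r}\ge\frac{c^{r-1}}{k}\binom{k}{r}$, and taking $n$ large absorbs $f(k,r,L)$.
\end{itemize}
Without such a step your plan has no argument that tiny atoms are unprofitable, and that is the whole difficulty.
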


Note that the second item holds for any fixed $c>1$ and sufficiently large $r$ (indeed, it holds that $c\ge (2r)^{1/(r-1)}$). We emphasize that, in both items, the upper bound holds for all sufficiently large $k \gg r$. However, due to the complexity of the lower bound construction, we do not aim to obtain matching lower bounds for all such $k$; instead, we determine the exact value for infinitely many $k$ and derive asymptotically matching lower bounds for all sufficiently large $k$. 
We expect that a complete determination for all such $k$ can be achieved using our approach.

\subsection{Organization and proof outline}
We now describe the organization of the paper together with a proof outline.
The starting point is to build a framework (Section~\ref{sep-tree}) that decomposes the hypergraph as follows: repeatedly remove a set of \(k\) vertices (a separator) that disconnects it, and continue on each component until every piece has at most a prescribed size (no more than \(ck+k\)). This yields a hierarchical partition of the vertex set (see Definition~\ref{def:sep_tree} for details).

We then introduce an abstract model that captures this decomposition. In this model, the total number of edges is bounded by counting the missing edges (i.e. non-edges) at each separator and within each piece. By maximizing the number of missing edges subject to the structural constraints, we first obtain an upper bound for the case where all pieces are relatively large (see Section~\ref{normal}).
If some pieces are smaller, we delete them one by one and show that the resulting loss of edges is small enough to be controlled. This constitutes the most technical part of the proof and is carried out in Sections~\ref{tiny},~\ref{delete}, and~\ref{bound}. 
Combining these arguments yields the desired upper bounds for our main results.

Explicit constructions (Section~\ref{7}) show that the bound in Theorem~\ref{0} is tight under the given conditions, and in particular asymptotically tight for large uniformity when $c=1$.

Finally, we conclude with several remarks and open problems in Section~\ref{sec:remarks}.

\section{The separator tree}\label{sep-tree}

We now analyze the structure of \(H \in \mathcal{F}_{c,r}(n,k)\) for \(c \geq 0\). Since any \((k+1)\)-connected hypergraph has at least \(k+2\) vertices, we may assume without loss of generality that \(c \geq \frac{1}{k}\).

Suppose that \(n > ck+k\). 
Since \(H\) is not \((k+1)\)-connected, there exists a separator \(S\) of size at most \(k\), together with two subsets \(A\) and \(B\), satisfying the following:
\begin{itemize}
\item [(a)] \(A \cap B = S\), \(A \cup B = V(H)\), and \(A \setminus S, B \setminus S \neq \emptyset\).
\item [(b)] No edge of \(H\) crosses between \(A \setminus S\) and \(B \setminus S\); that is, for every edge \(e \in E(H)\) with \(e \subseteq (A \cup B) \setminus S\), we have \(e \subseteq A \setminus S\) or \(e \subseteq B \setminus S\).
\end{itemize}

We may further assume that \(|S| = k\). Indeed, if \(|S| < k\), note that \(H\) has at least \(k+2\) vertices. We can add any \(k - |S|\) vertices from \((A \cup B) \setminus S\) to \(S\) while keeping at least one vertex in each of \(A \setminus S\) and \(B \setminus S\), thereby obtaining a new separator \(S'\). One can easily verify that \((S', A \cup S', B)\) still satisfies properties (a) and (b).

Now, if \(|A| > ck+k\) or \(|B| > ck+k\), then, because \(H\) contains no \((k+1)\)-connected subgraph on more than \(ck+k\) vertices, we can again find a separator and two subsets within \(H[A]\) or \(H[B]\), respectively. Repeating this decomposition process on such subsets until each has size at most \(ck+k\) eventually yields a tree-like structure. The process of separating an \( r \)-graph (or subgraph) into two parts via a separator can be viewed in reverse: we glue the two parts along the separator to reconstruct the original hypergraph. 

\begin{definition}\label{def:sep_tree}
Suppose that \(H \in \mathcal{F}_{c,r}(n,k)\). We say a rooted tree \(T_H\) is a \emph{separator tree} of \(H\) if it satisfies the following:
\begin{itemize}
\item There are two types of nodes: \emph{subgraph nodes} and \emph{separator nodes}.
\item Each subgraph node of \(T_H\) is labeled by a vertex subset \(A\) of \(H\), and each separator node is labeled by a \(k\)-vertex subset \(S\).
\item The root of \(T_H\) is labeled by \(V(H)\) (also denoted by \(H\)).
\item For every subgraph node \(A\) with \(|A| > ck + k\), it has exactly one child, which is a separator node \(S\) such that \(S\) separates \(A\).
\item For every separator node \(S\), it has a unique parent \(A\) and two children \(A_1\) and \(A_2\), where \((S, A_1, A_2)\) satisfies properties (a) and (b) with respect to the induced subgraph \(H[A]\).
\item Every subgraph node \(A\) with \(|A| \leq ck + k\) is a leaf of \(T_H\). We denote the leaves by \emph{atoms}.
\end{itemize}
Let \(A\) and \(B\) be two nodes of \(T_H\). 
We say that \(B\) is \emph{directly below} \(A\) (or \(A\) is \emph{directly above} \(B\)) if \(B\) is a child of \(A\).
We say that \(B\) is \emph{below} \(A\) (or \(A\) is \emph{above} \(B\)) if there exists a downward path from \(A\) to \(B\).
\end{definition}

From the previous discussion, we can recursively construct a separator tree by applying the decomposition process to the subgraphs of \(H\).
We state the following without a proof.
\begin{proposition}\label{prop:exist_sep_tree}
For each \(H \in \mathcal{F}_{c,r}(n,k)\), there exists a separator tree \(T_H\).
\end{proposition}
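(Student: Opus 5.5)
We build $T_H$ recursively by strong induction on $|A|$, proving the more general claim that for every subset $A\subseteq V(H)$ there is a rooted tree with root labelled $A$ that satisfies all the conditions of Definition~\ref{def:sep_tree}, where properties (a) and (b) are always taken relative to the relevant induced subgraph of $H$. Applying this with $A=V(H)$ gives Proposition~\ref{prop:exist_sep_tree}.

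\emph{Base case.} If $|A|\le ck+k$, the tree with the single node $A$ works, since $A$ is then an atom.

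\emph{Inductive step, finding a separator.} Suppose $|A|>ck+k$. By the definition of $\mathcal{F}_{c,r}(n,k)$, the induced subgraph $H[A]$ is not $(k+1)$-connected. We interpret this via the convention that a $(k+1)$-connected hypergraph has at least $k+2$ vertices; because $c\ge \frac1k$, we have $|A|>ck+k\ge k+1$, so $|A|\ge k+2$. Hence there is a set $S_0\subseteq A$ with $|S_0|\le k$ such that $H[A]-S_0$ is disconnected. Let $C$ be the vertex set of one component of $H[A]-S_0$. Put $A_1=C\cup S_0$ and $A_2=A\setminus C$. Then $A_1\cap A_2=S_0$, $A_1\cup A_2=A$, and both $A_1\setminus S_0$ and $A_2\setminus S_0$ are nonempty, so (a) holds. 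For (b), let $e$ be an edge contained in $A\setminus S_0$ that meets both $C$ and $A_2\setminus S_0$. Then $e$ contains a pair of vertices, one from each side, lying in a common hyperedge, which contradicts $C$ being a component. So (b) holds.

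\emph{Inductive step, padding the separator to size $k$.} We enlarge $S_0$ exactly as described in the text. Since $|A|\ge k+2$, the set $A\setminus S_0$ has at least $k-|S_0|+2$ vertices. We therefore add $k-|S_0|$ of these vertices to the separator while keeping at least one vertex on each side. If a moved vertex came from the $A_1$ side, we must also place it in $A_2$, so that $A_1\cap A_2$ is the new separator $S$. Condition (b) survives, because any edge avoiding $S$ also avoids $S_0$ and lies entirely on one side. Property (a) also survives, including the nonemptiness of both sides. This gives the separator node $S$ with $|S|=k$.

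\emph{Inductive step, termination.} Since $A_i\setminus S\ne\emptyset$ for $i=1,2$, each $A_i$ misses at least one vertex of $A$, so $|A_i|<|A|$. By the induction hypothesis there are trees rooted at $A_1$ and $A_2$. We attach these as the two children of the separator node $S$, and make $S$ the unique child of $A$. All conditions of Definition~\ref{def:sep_tree} then hold, because each condition is local to a node and its children. Every atom satisfies $|A|\le ck+k$, and the recursion terminates because sizes strictly decrease.

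The main point needing care is the padding step: moved vertices must be added to both sides so that $A_1\cap A_2=S$ still holds, and the side sizes must be tracked so that neither side becomes empty. We also need $|A|\ge k+2$, which comes from $c\ge 1/k$. Everything else is routine.
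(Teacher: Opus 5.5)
Your proof is correct and follows the route the paper sketches just before the proposition. You find a separator of size at most $k$ in the non-$(k+1)$-connected induced subgraph $H[A]$, using $c\ge 1/k$ to get $|A|\ge k+2$, pad it to exactly $k$ vertices while keeping both sides nonempty, and recurse on the two strictly smaller sides. Your padding step adds the new vertices to both sides so that $A_1\cap A_2=S$, which is slightly more careful than the paper's claim that $(S',A\cup S',B)$ satisfies (a) and (b).
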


\begin{figure}[H]
\begin{center}
	\begin{tikzpicture}[thin][node distance=1cm,on grid]
\filldraw[draw=black , fill=gray, fill opacity=0.3,  line width=1pt] (0,0.7) ellipse (0.25cm and 0.8cm); 

\filldraw[draw=black, fill=gray, fill opacity=0.3, line width=1pt] (0.625,0.7) ellipse (0.9cm and 0.3cm); 
     \filldraw[draw=black, fill=gray, fill opacity=0.3] (0,0.7) ellipse (1.55cm and 0.8cm); 
         
		\draw(0.1,-0.8)node {$H$};
		\draw(-1,0.6)node {$A$};
		\draw[color=black] (0.85,1.2)node {$B$};
		\draw[color=black] (0.85,0.25)node {$C$};
		\draw[color=blue] (0.08, 2) node (S1_new) {$S_1$};
    \draw[color=blue, ->] (S1_new) -- (0, 1.3); 
		\draw[color=blue] (2,0.7) node (S2_new) {$S_2$};
    \draw[color=blue, ->] (S2_new) -- (1.3,0.7);

	\end{tikzpicture}
	\hspace{2cm}
	\begin{tikzpicture}
\begin{scope}[local bounding box=left]
        \node[circle, minimum size=1.8mm, draw=black, fill=black, line width=0.6pt] (v1) at (0,1.2) {};
        \node[circle, minimum size=1.8mm, draw=black, fill=black, line width=0.6pt] (v2) at (-0.6,0) {};
        \node[circle, minimum size=1.8mm, draw=black, fill=black, line width=0.6pt] (v3) at (0.6,0) {};
        \node[circle, minimum size=1.8mm, draw=black, fill=black, line width=0.6pt] (v4) at (0.4,-0.9) {};
        \node[circle, minimum size=1.8mm, draw=black, fill=black, line width=0.6pt] (v5) at (1.4,-0.9) {};
        
        \node[circle, minimum size=1.8mm, draw=blue!80!black, fill=blue!80!black, line width=0.6pt] (w1) at (0,0.6) {};
        \node[circle, minimum size=1.8mm, draw=blue!80!black, fill=blue!80!black, line width=0.6pt] (w2) at (1,-0.5) {};
        
        \draw[black, line width=0.8pt] (v1) -- (w1);
        \draw[black, line width=0.6pt] (w1) -- (v2);
        \draw[black, line width=0.6pt] (w1) -- (v3);
        \draw[black, line width=0.6pt] (v3) -- (w2);
        \draw[black, line width=0.6pt] (w2) -- (v4);
        \draw[black, line width=0.6pt] (w2) -- (v5);
        
        \node[font=\small\bfseries] at (0,1.6) {$H$};
        \node[font=\small\bfseries] at (-1,0) {$A$};
        \node[font=\small\bfseries] at (1.2,0) {$B \cup C$};
        \node[font=\small\bfseries] at (0.4,-1.25) {$B$};
        \node[font=\small\bfseries] at (1.4,-1.25) {$C$};
        \node[font=\small\bfseries, color=blue!80!black] at (0.25,0.9) {$S_1$};
        \node[font=\small\bfseries, color=blue!80!black] at (1.35,-0.35) {$S_2$};
    \end{scope}
	\end{tikzpicture}
\caption{On the left, we sketch an \( r \)-uniform hypergraph \( H \) recursively separated by the separators \( S_1 \) and \( S_2 \), where \( S_1 = A \cap (B \cup C) \) and \( S_2 = B \cap C \). The corresponding separator tree is shown on the right. The three atoms are labelled \( A \), \( B \), and \( C \), each of size at most \( ck+k \).}
\label{Figseparator_tree}
\end{center}
\end{figure}
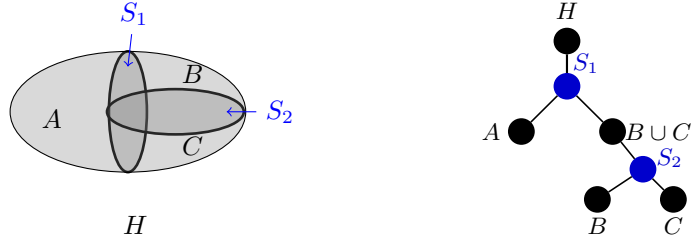

In the remaining of this section, we fix an \(H\in \mathcal{F}_{c,r}(n,k)\) and a separator tree \(T_H\) of \(H\).
Given the separator tree, we obtain an inductive relation for the number of edges in the subgraph nodes.

\begin{proposition}\label{prop:induction_edgenum}
Let \(P\) be a subgraph node that is not a leaf, let \(S\) be the separator node directly below \(P\), and let \(A\) and \(B\) be the two subgraph nodes directly below \(S\). Then
\[
    |P| = |A| + |B| - k \qquad\text{and}\qquad 
    e(P) = e(A) + e(B) + \beta(S) - e(S) ,
\] 
where \(\beta(S)\) is the number of edges that intersect all three parts \(S\), \(A \setminus S\) and \(B \setminus S\) in \(P\).    
\end{proposition}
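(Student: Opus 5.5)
Both identities follow directly from properties (a) and (b) of the triple \((S,A,B)\) with respect to \(H[P]\), which hold by Definition~\ref{def:sep_tree}; here \(e(X)\) means the number of edges of the induced subhypergraph \(H[X]\).

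\emph{Vertex count.} By (a), \(A\cup B=P\) and \(A\cap B=S\) with \(|S|=k\). Inclusion–exclusion then gives \(|P|=|A|+|B|-|A\cap B|=|A|+|B|-k\).

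\emph{Edge count.} Every edge \(e\) of \(H[P]\) lies in exactly one of three classes.
\begin{itemize}
\item[(i)] \(e\subseteq A\).
\item[(ii)] \(e\subseteq B\) but \(e\not\subseteq A\).
\item[(iii)] \(e\) is contained in neither \(A\) nor \(B\).
\end{itemize}
Class (i) has exactly \(e(A)\) edges. For class (ii), an edge contained in both \(A\) and \(B\) lies in \(A\cap B=S\). So the edges inside \(B\) number \(e(B)\), of which exactly \(e(S)\) are also inside \(A\); class (ii) therefore has \(e(B)-e(S)\) edges.

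For class (iii), we claim these are exactly the edges meeting all three of \(S\), \(A\setminus S\) and \(B\setminus S\). If \(e\not\subseteq A\), then \(e\) meets \(P\setminus A=B\setminus S\); likewise \(e\not\subseteq B\) forces \(e\) to meet \(A\setminus S\). So \(e\) meets both \(A\setminus S\) and \(B\setminus S\), and \(e\not\subseteq S\). If in addition \(e\cap S=\emptyset\), then \(e\subseteq P\setminus S=(A\setminus S)\cup(B\setminus S)\). Property (b) then forces \(e\subseteq A\setminus S\) or \(e\subseteq B\setminus S\), contradicting that \(e\) meets both. Hence \(e\) meets all three parts. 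Conversely, an edge meeting \(A\setminus S\) and \(B\setminus S\) is contained in neither \(A\) nor \(B\), so it is in class (iii). Class (iii) thus has exactly \(\beta(S)\) edges.

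Summing the three classes gives \(e(P)=e(A)+e(B)-e(S)+\beta(S)\).

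The only step needing care is the characterization of class (iii). It relies on reading (b) as applying to edges of \(H[P]\) disjoint from \(S\), which is exactly how (b) is stated for \(H[P]\); the rest of the argument is routine bookkeeping.
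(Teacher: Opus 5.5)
Your proof is correct and follows the same route as the paper: inclusion--exclusion for the vertex count, and a split of the edges of \(H[P]\) into those inside \(A\), those inside \(B\) (which overlap exactly in the \(e(S)\) edges inside \(S\)), and the bonded edges counted by \(\beta(S)\). The paper simply asserts this trichotomy, whereas you justify via property (b) why an edge contained in neither \(A\) nor \(B\) must meet all three parts.
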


\begin{proof}
The first equality holds because \(|S| = k\) and \(S\) separates \(A\) and \(B\).
Now consider an edge \(e\) in \(P\). Then either \(e \subseteq A\), \(e \subseteq B\), or \(e\) intersects all of \(S\), \(A \setminus S\) and \(B \setminus S\).
Since \(S = A \cap B\), the number of edges of the first two types is \(e(A) + e(B) - e(S)\).
By definition, the number of edges of the third type is \(\beta(S)\).
Summing these gives \(e(P) = e(A) + e(B) - e(S) + \beta(S)\).
\end{proof}

We introduce the notion of \emph{anti-edge} to obtain an effective upper bound for \(e(H)\). An \(r\)-subset of \(V(H)\) is called an anti-edge if it is not an edge of \(H\).

\begin{definition}\label{def:anti_edge}
For an atom \(A\), let \(\bar{e}(A) = \binom{|A|}{r} - e(A)\) denote the number of anti-edges in \(A\).
For a separator node \(S\), let \(P_1\) and \(P_2\) be the two subgraph nodes directly below \(S\), and set \(p_1 = |P_1| - k\) and \(p_2 = |P_2| - k\).
We define \(\bar{e}(S) = \binom{k}{r} - e(S)\) as the number of anti-edges in \(S\), and define
\[
\bar{\beta}(S) = \binom{p_1 + p_2 + k}{r} - \binom{p_1 + k}{r} - \binom{p_2 + k}{r} + \binom{k}{r} - \binom{p_1 + p_2}{r} + \binom{p_1}{r} + \binom{p_2}{r} 
-\beta(S)\]
By the inclusion–exclusion principle, \(\bar{\beta}(S)\) is precisely the number of anti-edges that intersect all three parts \(P_1 \setminus S\), \(P_2 \setminus S\) and \(S\) in \(P_1 \cup P_2\). We say these anti-edges \emph{bonded to} \(S\).
\end{definition}

\begin{lemma}\label{lem:edgenum_septree}
Suppose that \(\mathcal{A}\) is the set of atoms in \(T_H\) and \(\mathcal{S}\) is the set of separator nodes in \(T_H\). Then
\[
e(H) = \binom{n}{r} - \binom{n-k}{r} + \sum_{A\in \mathcal{A}}\left(\binom{|A|-k}{r} - \bar{e}(A)\right) + \sum_{S\in\mathcal{S}}\bigl( \bar{e}(S) - \bar{\beta}(S)\bigr).
\]
\end{lemma}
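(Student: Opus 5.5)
We argue by induction over the separator tree \(T_H\), proving a statement for every subgraph node \(P\) rather than only for the root. For a subgraph node \(P\), let \(\mathcal{A}_P\) and \(\mathcal{S}_P\) denote the atoms and separator nodes in the subtree rooted at \(P\). The claim is
\[
e(P) = \binom{|P|}{r} - \binom{|P|-k}{r} + \sum_{A\in \mathcal{A}_P}\left(\binom{|A|-k}{r} - \bar{e}(A)\right) + \sum_{S\in\mathcal{S}_P}\bigl( \bar{e}(S) - \bar{\beta}(S)\bigr).
\]
Applying this at the root \(P=V(H)\), where \(|P|=n\), gives Lemma~\ref{lem:edgenum_septree}. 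The induction runs from the leaves upward, equivalently on \(|P|\).

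\emph{Base case.} If \(P\) is an atom, then \(\mathcal{A}_P=\{P\}\) and \(\mathcal{S}_P=\emptyset\). The right-hand side is \(\binom{|P|}{r}-\binom{|P|-k}{r}+\binom{|P|-k}{r}-\bar e(P)\), which equals \(e(P)\) by the definition of \(\bar e(P)\).

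\emph{Inductive step.} Let \(P\) be a non-leaf, with separator child \(S\) and subgraph grandchildren \(A\) and \(B\). Set \(p_1=|A|-k\) and \(p_2=|B|-k\), so \(|P|=p_1+p_2+k\) by Proposition~\ref{prop:induction_edgenum}. The subtree of \(P\) consists of \(S\) together with the disjoint union of the subtrees of \(A\) and \(B\), so
\[
\mathcal{A}_P=\mathcal{A}_A\sqcup\mathcal{A}_B \quad\text{and}\quad \mathcal{S}_P=\mathcal{S}_A\sqcup\mathcal{S}_B\sqcup\{S\}.
\]
Proposition~\ref{prop:induction_edgenum} gives \(e(P)=e(A)+e(B)+\beta(S)-e(S)\). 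We substitute the induction hypothesis for \(e(A)\) and \(e(B)\), and substitute
\[
e(S)=\binom{k}{r}-\bar e(S), \qquad \beta(S)=\Phi-\bar\beta(S),
\]
where \(\Phi\) is the alternating binomial expression in Definition~\ref{def:anti_edge}. The two sums over atoms and separators then combine exactly into the sums over \(\mathcal{A}_P\) and \(\mathcal{S}_P\). It remains to verify the purely binomial identity
\[
\binom{p_1+k}{r}-\binom{p_1}{r}+\binom{p_2+k}{r}-\binom{p_2}{r}+\Phi-\binom{k}{r} = \binom{p_1+p_2+k}{r}-\binom{p_1+p_2}{r}.
\]
Expanding \(\Phi\) shows that all the \(\binom{p_i+k}{r}\), \(\binom{p_i}{r}\) and \(\binom{k}{r}\) terms cancel, which confirms the identity.

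The only real content is this bookkeeping identity, and it is immediate once \(\Phi\) is written out. So the main point needing care is just confirming that the subtree decomposition is a disjoint union, meaning each atom and each separator node is counted exactly once. This is clear because \(T_H\) is a tree.
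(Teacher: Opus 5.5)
Your proof is correct and takes the same route as the paper. Both argue by induction over the subgraph nodes of $T_H$, with the base case at an atom, and the inductive step uses Proposition~\ref{prop:induction_edgenum} together with $e(S)=\binom{k}{r}-\bar e(S)$ and the definition of $\bar\beta(S)$. You also write out the binomial cancellation that the paper leaves to ``comparing the definition,'' and you state the claim for every subgraph node rather than re-applying the lemma to $H[P_i]$; both are small improvements in explicitness.
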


\begin{proof}
We prove Lemma~\ref{lem:edgenum_septree} by induction.
First, suppose that \(H\) is an atom.
Then \(\mathcal{A} = \{H\}\) and \(\mathcal{S} = \emptyset\). Substituting into the formula yields
\[
e(H) = \binom{n}{r} - \bar{e}(H) = \binom{n}{r} - \binom{n-k}{r} + \sum_{A\in \mathcal{A}}\left(\binom{|A|-k}{r} - \bar{e}(A)\right) + \sum_{S\in\mathcal{S}}\bigl( \bar{e}(S) - \bar{\beta}(S)\bigr),
\]
which verifies the base case.

Now assume that the formula holds for \(H[P]\) for every subgraph node \(P\) except \(P = H\).
Let \(S_0\) be the separator node directly below \(H\), and let \(P_1, P_2\) be the two subgraph nodes directly below \(S_0\).
Let \(\mathcal{A}_1, \mathcal{A}_2\) be the sets of atoms in \(P_1, P_2\), and let \(\mathcal{S}_1, \mathcal{S}_2\) be the sets of separator nodes in \(P_1, P_2\), respectively.
Set \(p_1 = |P_1| - k\) and \(p_2 = |P_2| - k\); then \(n = p_1 + p_2 + k\).

By Proposition~\ref{prop:induction_edgenum} and the induction hypothesis, we have
\begin{equation}\label{eq:eH_induction}
\begin{aligned}
 e(H) &= e(P_1) + e(P_2) + \bigl(\beta(S_0) - e(S_0)\bigr) \\[2mm]
&= \binom{p_1+k}{r} - \binom{p_1}{r} + \sum_{A\in \mathcal{A}_1 } \left(\binom{|A|-k}{r} - \bar{e}(A)\right) 
+ \sum_{S\in \mathcal{S}_1 } \bigl(\bar{e}(S) - \bar{\beta}(S)\bigr)  \\
&\quad\ + \binom{p_2+k}{r} - \binom{p_2}{r}+\sum_{A\in \mathcal{A}_2} \left(\binom{|A|-k}{r} - \bar{e}(A)\right) 
+ \sum_{S\in  \mathcal{S}_2} \bigl(\bar{e}(S) - \bar{\beta}(S)\bigr) 
+ \bigl(\beta(S_0) - e(S_0)\bigr).
\end{aligned}
\end{equation}

Observe that \(\mathcal{A} = \mathcal{A}_1 \sqcup \mathcal{A}_2\) and \(\mathcal{S} = \mathcal{S}_1 \sqcup \mathcal{S}_2 \sqcup \{S_0\}\). 
Since \(\bar{e}(S_0) = \binom{k}{r} - e(S_0)\), comparing the definition of \(\bar{\beta}(S_0)\) with \eqref{eq:eH_induction} yields the desired formula.
\end{proof}

From Lemma~\ref{lem:edgenum_septree}, we see that given the shape of \(T_H\) and the sizes of all atoms in \(T_H\), the number of edges \(e(H)\) depends only on
\[
W = \sum_{S\in\mathcal{S}} \bar{e}(S) - \sum_{A\in \mathcal{A}} \bar{e}(A) - \sum_{S\in\mathcal{S}} \bar{\beta}(S).
\]
We analyze the contribution of each anti-edge to \(W\).
For an anti-edge \(e\), define
\[
w(e) = \bigl|\{S\in\mathcal{S} : e \subseteq S\}\bigr| - \bigl|\{S'\in\mathcal{S} : e \text{ is bonded to } S'\}\bigr| - \bigl|\{A\in\mathcal{A} : e \subseteq A\}\bigr|.
\]
Then \(W = \sum_{\text{anti-edge } e} w(e)\).
To estimate \(w(e)\), we introduce the notion of a \emph{free anti-edge}.

\begin{definition}\label{def:orientation_and_free}
Fix an arbitrary orientation \(\sigma\) of \(T_H\); that is, for every separator node \(S \in \mathcal{S}\), we designate one of its two child subgraph nodes as the \emph{small branch} and the other as the \emph{big branch}. The distinction between the two will be specified later.
For a subgraph node \(P\) and an anti-edge \(e \subseteq P\), we define the relation between \(e\) and \(P\) recursively as follows:
\begin{enumerate}
    \item If \(P\) is an atom, then \(e\) is \emph{atomic} in \(P\).
    \item Otherwise, let \(S_0\) be the separator node directly below \(P\), let \(A\) be the small branch, and let \(B\) be the big branch.
    \begin{enumerate}
        \item If all endpoints of \(e\) belong to both \(A \setminus S_0\) and \(B \setminus S_0\), then \(e\) is \emph{free} in \(P\).
        \item If \(e\) is bonded to \(S_0\), then \(e\) is \emph{bonded} in \(P\).
        \item If \(e \subseteq B\), then the relation between \(e\) and \(P\) is the same as the relation between \(e\) and \(B\).
        \item Otherwise, \(e \subseteq A\), and the relation between \(e\) and \(P\) is the same as the relation between \(e\) and \(A\).
    \end{enumerate}
\end{enumerate}

For a separator node \(S\) and an anti-edge \(e \subseteq S\), we say that \(e\) is \emph{free in \(S\)} if \(e\) is free in the small branch of \(S\). 
Let \(f_\sigma(S)\) denote the number of free anti-edges in \(S\); when no ambiguity arises, we simply write \(f(S)\).
\end{definition}

\begin{lemma}\label{lem:edge_f(S)}
For any anti-edge \(e\), we have
\begin{equation}\label{eq:free_weight}
w(e) \leq \bigl|\{S \in \mathcal{S} : e \text{ is free in } S\}\bigr|.
\end{equation}

Consequently,
\[
e(H) \leq \binom{n}{r} - \binom{n-k}{r} + \sum_{A\in \mathcal{A}} \binom{|A|-k}{r} + \sum_{S\in\mathcal{S}} f(S).
\]
\end{lemma}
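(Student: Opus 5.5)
We prove \eqref{eq:free_weight} by induction along the separator tree, for a fixed anti-edge \(e\), using a statement localized to subtrees. For a subgraph node \(P\) with \(e\subseteq P\), let
\[
w_P(e)=\bigl|\{S\text{ below }P: e\subseteq S\}\bigr|-\bigl|\{S\text{ below }P: e\text{ bonded to }S\}\bigr|-\bigl|\{A\in\mathcal{A}\text{ below or equal to }P: e\subseteq A\}\bigr|,
\]
and let \(F_P(e)\) be the number of separator nodes \(S\) below \(P\) with \(e\) free in \(S\). Here "free" is read as in Definition~\ref{def:orientation_and_free}(a): \(e\subseteq P\setminus S_0\) and \(e\) meets both \(A\setminus S_0\) and \(B\setminus S_0\).

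We will prove the following claim:
\[
w_P(e)\le F_P(e)-1 \text{ if } e \text{ is not free in } P, \qquad w_P(e)\le F_P(e) \text{ if } e \text{ is free in } P.
\]
Taking \(P=V(H)\), every separator and atom is below the root, so \(w(e)=w_{V(H)}(e)\le F_{V(H)}(e)\). This is exactly \eqref{eq:free_weight}.

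The claim is proved by induction on the height of \(P\), splitting into cases as in Definition~\ref{def:orientation_and_free}. Throughout, let \(S_0\) be the separator below \(P\), with small branch \(A\) and big branch \(B\).

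1. If \(P\) is an atom, then \(e\) is atomic in \(P\), \(w_P(e)=-1\) and \(F_P(e)=0\).

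2. If \(e\) is free in \(P\), then \(e\not\subseteq A\) and \(e\not\subseteq B\). Hence \(e\) lies in no separator or atom strictly below \(P\). It is also not bonded to any separator below \(P\): bonding to a separator \(S'\) requires \(e\) to lie inside the parent of \(S'\), and \(e\) does not meet \(S_0\). Therefore \(w_P(e)=0\le F_P(e)\).

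3. If \(e\) is bonded to \(S_0\), the same reasoning shows that the only contribution is the \(-1\) from \(S_0\). So \(w_P(e)=-1\le F_P(e)-1\).

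4. If \(e\subseteq B\setminus S_0\) or \(e\subseteq A\setminus S_0\), only the corresponding subtree contributes, and \(S_0\) contributes nothing. Then \(w_P(e)=w_B(e)\) (respectively \(w_A(e)\)), with the same value of \(F\) and the same relation to the node. The claim transfers directly.

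The main case is \(e\subseteq S_0\). Then \(e\) lies in both branches, and
\[
w_P(e)=1+w_A(e)+w_B(e),\qquad F_P(e)=F_A(e)+F_B(e)+[e\text{ free in }A],
\]
because \(e\) is free in \(S_0\) exactly when it is free in the small branch \(A\). By rule (c), the relation of \(e\) to \(P\) equals its relation to \(B\).
\begin{itemize}
\item If \(e\) is free in \(A\), induction gives \(w_A(e)\le F_A(e)\). Then \(1+w_A(e)\le F_A(e)+[e\text{ free in }A]\).
\item If \(e\) is not free in \(A\), induction gives \(w_A(e)\le F_A(e)-1\), so again \(1+w_A(e)\le F_A(e)+[e\text{ free in }A]\).
\end{itemize}
In both cases, \(w_P(e)\le F_P(e)-F_B(e)+w_B(e)\). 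Combining this with the inductive bound for \(B\) gives \(w_P(e)\le F_P(e)-[e\text{ not free in }B]\), which is the claim because \(e\) has the same relation to \(P\) as to \(B\). The point of this step is that the \(+1\) coming from \(e\subseteq S_0\) is absorbed by the \(-1\) that the small branch always supplies, unless \(e\) is free there, in which case \(S_0\) earns a free count. This bookkeeping is where the care is needed.

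For the consequence, sum \eqref{eq:free_weight} over all anti-edges. This gives
\[
W=\sum_e w(e)\le\sum_e\bigl|\{S: e\text{ free in }S\}\bigr|=\sum_{S\in\mathcal{S}}f(S).
\]
Substitute this into Lemma~\ref{lem:edgenum_septree}, whose edge formula equals \(\binom{n}{r}-\binom{n-k}{r}+\sum_{A\in\mathcal{A}}\binom{|A|-k}{r}+W\). This yields the stated bound on \(e(H)\).
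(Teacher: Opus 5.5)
Your proof is correct, but it is organized differently from the paper's. The paper uses a global injection. To each separator $S\supseteq e$ in which $e$ is not free, it assigns the node where the recursive process of Definition~\ref{def:orientation_and_free}, started at the small branch of $S$, terminates: either an atom containing $e$, or a node whose separator $e$ is bonded to. It then shows these nodes are distinct. If two small branches with $P_1$ strictly below $P_2$ led to the same node, the process from $P_2$ would have to enter $P_1$; but rule (c) sends it into the big branch of $S_1$, because $e\subseteq S_1$.

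Your invariant $w_P(e)\le F_P(e)-[e\text{ not free in }P]$ is the local, unrolled form of the same charging. The reserved $-1$ at $P$ is exactly the terminal node of the process started at $P$. In the case $e\subseteq S_0$, you spend the small branch's $-1$ on $S_0$ and pass the big branch's $-1$ upward, so injectivity is built into the recursion rather than proved by a comparability argument. The cost is the case analysis: in cases 2 and 3 you must check that nothing below a free or bonded termination contributes to $w_P$, which the paper's injection never needs. You also spell out the double counting $\sum_e w(e)=W$ and $\sum_e|\{S: e\text{ free in }S\}|=\sum_S f(S)$ that gives the edge bound; the paper leaves this implicit.

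One small repair: as written, your cases are not exhaustive. Case 4 ($e\subseteq A\setminus S_0$ or $e\subseteq B\setminus S_0$) misses anti-edges that meet $S_0$ and exactly one of $A\setminus S_0$, $B\setminus S_0$. Phrase it instead as ``$e$ lies in one branch but $e\not\subseteq S_0$''. Your argument then applies verbatim. Such an $e$ is neither contained in nor bonded to $S_0$. It is also not contained in the other branch, so no node in that subtree contains $e$ or has $e$ bonded to it. Hence $w_P(e)$ and $F_P(e)$ equal the values for the branch containing $e$, and rule (c) or (d) transfers the relation.
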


\begin{proof}
By the definition of \(w(e)\), it suffices to show that for any two distinct separators \(S_1, S_2\) such that \(e\) is not free in either \(S_1\) or \(S_2\), there exist distinct subgraph nodes \(A_1, A_2\) such that the recursive process in Definition~\ref{def:orientation_and_free} applied to \((e, S_1)\) and \((e, S_2)\) terminates at \(A_1\) and \(A_2\), respectively.
If this holds, then
\begin{equation}\label{eq:not_free}
\bigl|\{S \in \mathcal{S} : e \text{ is not free in } S\}\bigr| \leq \bigl|\{S'\in\mathcal{S} : e \text{ is bonded to } S'\}\bigr| + \bigl|\{A\in\mathcal{A} : e \subseteq A\}\bigr|.
\end{equation}
Indeed, for each \((e, S_i)\) with \(e\) not free in \(S_i\), there exists a corresponding subgraph node \(A_i\). If \(A_i\) is an atom, then \(e \subseteq A_i\); if \(A_i\) is not an atom, then \(e\) is bonded to the separator node \(S_i'\) directly below \(A_i\). In either case, the pair \((e, A_i)\) or \((e, S_i)\) is counted at most once on the right-hand side.
Clearly, \eqref{eq:not_free} implies \eqref{eq:free_weight}, and the lemma follows.

Now suppose, for contradiction, that \(A_1 = A_2\).
For \(i = 1, 2\), let \(P_i\) be the small branch of \(S_i\). Then \(A_i\) lies below \(P_i\), and consequently one of \(P_1\) and \(P_2\) is below the other. Without loss of generality, assume that \(P_1\) is below \(P_2\).

Consider the recursive process starting from \(P_2\). Since \(A_1 = A_2\), the process must pass through \(P_1\). However, because \(e \subseteq S_1\), this anti-edge lies entirely within the big branch of \(S_1\) (since \(S_1\) is a separator and \(e\) is contained in it). According to Definition~\ref{def:orientation_and_free} 2(c), when the process encounters \(S_1\), it proceeds into the big branch of \(S_1\) rather than into \(P_1\). Hence the process cannot reach \(P_1\), contradicting the assumption that \(A_1 = A_2\). Therefore, \(A_1\) and \(A_2\) must be distinct.
\end{proof}

Equipped with Lemma~\ref{lem:edge_f(S)}, to estimate the number of edges in \(H\), we only need to determine:
\begin{itemize}
\item The shape of \(T_H\) and the sizes of all atoms in \(T_H\); and
\item The quantity \(\sum_{S\in\mathcal{S}} f(S)\).
\end{itemize}
This is why we do not need to concern about the specific structure inside each subgraph node.
Instead, we aim to wisely orient \(T_H\) and derive an upper bound for \(f(S)\) for each separator.

\begin{definition}\label{def:abstract_tree}
Suppose \(T_H\) is a separator tree. 
We define \(\widehat{T_H}\) from \(T_H\) as follows:
\begin{itemize}
\item For each subgraph node \(A\), we retain only its size \(|A|\);
\item For each separator node \(S\), we no longer regard it as an actual separator within a subgraph of \(H\).
\end{itemize}
For a given abstract tree \(\widehat{T}\) and an orientation \(\sigma\), let \(\widehat{F}_{\sigma}\) denote the maximum number of free anti-edges over all hypergraphs \(H' \in \mathcal{F}_{c,r}(n,k)\) for which \(\widehat{T}\) is an abstract tree of \(H'\).
We then define the edge estimate
\[
e(\widehat{T_H}) = \binom{n}{r} - \binom{n-k}{r} + \sum_{A\in \mathcal{A}} \binom{|A|-k}{r} +  \widehat{F}_{\sigma}.
\]
When no ambiguity arises, we simply write \(\widehat{F} = \widehat{F}_{\sigma}\).
\end{definition}

The following result is an immediate consequence of Lemma~\ref{lem:edge_f(S)}.

\begin{corollary}\label{cor:edge_abstract_tree}
For any \(H\) with separator tree \(T_H\), we have \(e(H) \leq e(\widehat{T_H})\).
\end{corollary}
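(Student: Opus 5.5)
The plan is to chain Lemma~\ref{lem:edge_f(S)} with the definition of $\widehat{F}_\sigma$. We fix the orientation $\sigma$ of $T_H$ that is used to define $e(\widehat{T_H})$; Lemma~\ref{lem:edge_f(S)} is valid for an arbitrary orientation, so this choice is allowed. Applying Lemma~\ref{lem:edge_f(S)} to $H$ with this $\sigma$ gives
\[
e(H) \leq \binom{n}{r} - \binom{n-k}{r} + \sum_{A\in \mathcal{A}} \binom{|A|-k}{r} + \sum_{S\in\mathcal{S}} f_\sigma(S).
\]
The first three terms coincide with those in $e(\widehat{T_H})$, since they depend only on $n$, $k$, $r$ and the atom sizes, and all of these are recorded in $\widehat{T_H}$.

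It therefore remains to show that $\sum_{S\in\mathcal{S}} f_\sigma(S) \le \widehat{F}_\sigma$. The key observation is that $H$ itself is one of the hypergraphs $H'$ over which the maximum defining $\widehat{F}_\sigma$ is taken. Indeed, $H\in\mathcal{F}_{c,r}(n,k)$, and $\widehat{T_H}$ is by construction an abstract tree of $H$, obtained from the separator tree $T_H$. The total number of free anti-edges of $H$ with respect to $T_H$ and $\sigma$ is precisely $\sum_{S} f_\sigma(S)$. Hence this quantity is at most the maximum $\widehat{F}_\sigma$, and substituting gives $e(H)\le e(\widehat{T_H})$.

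The only point requiring care is interpretive rather than technical. We must make sure that ``the number of free anti-edges'' in the definition of $\widehat{F}_\sigma$ means the sum $\sum_S f_\sigma(S)$, counted with multiplicity over separators, and that the orientation on $\widehat{T}$ transfers to every realizing $H'$. This holds because the small/big designations are attached to the tree nodes themselves. With these points checked, the corollary follows immediately.
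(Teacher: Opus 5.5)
Your proof is correct and matches the paper's reasoning, which states the corollary as an immediate consequence of Lemma~\ref{lem:edge_f(S)}. The second step is the observation that $H$, together with $T_H$ and $\sigma$, is itself one of the realizations over which $\widehat{F}_\sigma$ is maximized, so $\sum_{S} f_\sigma(S)\le \widehat{F}_\sigma$; reading ``number of free anti-edges'' as this sum with multiplicity, as you do, is what makes the two quantities match.
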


\section{Normal Atoms}\label{normal}
In section~\ref{normal} to section~\ref{bound}, we always assume that \(c\geq 1\).

Now we give a rule to determine which branch of a separator is considered big or small. We begin by considering the size of atoms. An atom is \emph{normal} if its size is at least $\frac{4k}{3}$; otherwise, it is called \emph{tiny}. The choice of the threshold $\frac{4k}{3}$ will be justified in the proof.

A separator tree or abstract tree is \emph{normal} if all its atoms have size greater than $\frac{4k}{3}$. In this section, we consider only normal abstract trees. For a branch of a separator, we say it is \emph{big} if it contains more normal atoms than the other branch; the other branch is called \emph{small}. If both branches contain the same number of normal atoms, we choose the big branch arbitrarily. This rule defines an orientation $\sigma$ of the separator tree. We begin by estimating the number of free anti-edges at separators in a normal separator tree to bound $f(S)$. For convenience, let \(\ell(S)\) and \(\ell(S,+)\) be the numbers of normal atoms in the small branch and the big branch of \(S\), respectively. We say a separator $S$ is \emph{balanced} if $\ell(S)=\ell(S,+)$.
\begin{lemma}\label{normal1}
Let \(S\) be a separator in a normal separator tree. The number \(f(S)\) of free anti-edges in \(S\) within the small branch of \(S\) is at most
\[
f(S) \le \binom{k}{r} 
- \bigl(\ell(S)-q\bigr) \cdot \binom{\bigl\lfloor\frac{k}{\ell(S)}\bigr\rfloor}{r}
- q \cdot \binom{\bigl\lceil\frac{k}{\ell(S)}\bigr\rceil}{r}\leq \binom{k}{r}-\ell(S)\cdot\binom{\frac{k}{\ell(S)}}{r} \footnote{We use the extended binomial coefficient \(\binom{x}{r}\) for non‑integer \(x\), defined for all real numbers \(x\) by \(\binom{x}{r} = \frac{x(x-1)\cdots(x-r+1)}{r!}\) when \(x \geq r-1\), and \(\binom{x}{r} = 0\) when \(x < r-1\). This convention ensures the continuity and convexity required in the subsequent analysis.},
\]
where \( q = k \bmod \ell(S) \).
\end{lemma}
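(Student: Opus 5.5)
The plan is to bound $f(S)$ from above by exhibiting many $r$-subsets of $S$ that are certainly \emph{not} free in the small branch $P$ of $S$. Since $f(S)$ counts only anti-edges $e\subseteq S$ that are free in $P$, we have
\[
f(S)\le \binom{k}{r}-\#\{e\in\tbinom{S}{r}: e \text{ is not free in } P\}.
\]
So it suffices to find a partition of $S$ into at most $\ell(S)$ classes such that every $r$-subset lying inside a single class is non-free in $P$. Note $S\subseteq P$, since $S=P_1\cap P_2$ for the two children of $S$.

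To build the partition, I would follow the recursive process of Definition~\ref{def:orientation_and_free} inside $P$ at the level of vertex sets.
\begin{itemize}
\item Start with $X_P:=S$ at the node $P$.
\item At a non-leaf subgraph node $P'$ carrying a set $X_{P'}\subseteq P'$, let $S_0$ be the separator directly below $P'$, with small branch $A$ and big branch $B$.
\item Send $X_{P'}\cap(A\setminus S_0)$ to $A$ and $X_{P'}\cap B$ (which includes the part in $S_0$) to $B$.
\end{itemize}
This is a partition of $X_{P'}$, because $P'=(A\setminus S_0)\sqcup B$. Iterating down to the leaves partitions $S$ into sets $X_{A'}$ indexed by the atoms $A'$ of the subtree below $P$. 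Because the tree is normal, every atom is normal, so the number of atoms below $P$ is exactly $\ell(S)$; some classes may be empty.

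Next I check that any $r$-subset $e\subseteq X_{A'}$ is not free in $P$. I trace the recursive definition along the path from $P$ down to $A'$; at each separator $S_0$ on this path, $e$ lies in the set carried along the path. There are two cases.
\begin{itemize}
\item If the path goes to $B$, then $e\subseteq B$ and rule 2(c) applies. Rules 2(a) and 2(b) do not fire, since $e$ does not meet $A\setminus S_0$.
\item If the path goes to $A$, then $e\subseteq A\setminus S_0$, so $e$ meets neither $S_0$ nor $B\setminus S_0$. Hence $e$ is neither free nor bonded at $S_0$, and rule 2(d) sends it to $A$.
\end{itemize}
Thus the process ends at the atom $A'$, and $e$ is atomic, not free, in $P$. (Anti-edges and edges alike: being non-free is a property of the $r$-set, so this only shrinks the set of candidates for free anti-edges.)

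It follows that $f(S)\le\binom{k}{r}-\sum_{A'}\binom{|X_{A'}|}{r}$, where the sum runs over $\ell(S)$ nonnegative integers summing to $k$. By convexity of $x\mapsto\binom{x}{r}$ (with the stated extended convention), the sum is minimized by the most balanced integer split. That split has $q$ parts of size $\lceil k/\ell(S)\rceil$ and $\ell(S)-q$ parts of size $\lfloor k/\ell(S)\rfloor$, where $q=k\bmod \ell(S)$, which gives the first inequality. The second inequality follows from Jensen applied to the real-valued convex extension: $\sum\binom{x_i}{r}\ge \ell(S)\binom{k/\ell(S)}{r}$.

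The main point requiring care is the partition step. One must route the $S_0$-part into the \emph{big} branch so that it matches rule 2(c)'s priority over 2(d), and confirm that the number of leaves below $P$ is exactly $\ell(S)$, which uses normality. The convexity step is routine.
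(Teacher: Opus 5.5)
Your proof is correct and follows the paper's approach: partition $S$ into $\ell(S)$ classes indexed by the normal atoms of the small branch, treat every $r$-subset inside a single class as non-free, and finish by convexity of $\binom{x}{r}$. Your explicit downward routing, which sends the vertices of $S_0$ into the big branch, is the path $P_u$ the paper only introduces later in the proof of Lemma~\ref{technical data}, and it is what makes the non-freeness step valid. The paper's proof here assigns each vertex to an arbitrary normal atom containing it, which does not suffice: for example, an $r$-set inside $S_0$ can lie in a single atom of the small branch of $S_0$, yet it is routed into the big branch, where it can become free.
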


\begin{proof}
Every vertex in \(S\) belongs to at least one normal atom. If an anti-edge is free, its vertices cannot all lie in the same normal atom. Assign each vertex of \(S\) to one of the normal atoms that contain it; thus the vertex set of \(S\) is partitioned into groups, each assigned to a single normal atom. The number of free anti-edges of \(S\) in the small branch is then bounded above by the number of anti-edges whose endpoints lie in different groups. This quantity equals the number of edges in a complete \(\ell(S)\)-partite \(r\)-graph whose part sizes are as balanced as possible. The latter is maximized when the parts differ in size by at most one, yielding the first inequality.
The second inequality follows from the convexity of \(h(x) = \binom{x}{r}\).
\end{proof}

In Lemma~\ref{normal1}, we focus on the small branch of a separator \(S\) because it contains fewer normal atoms. When the two branches \(A\) and \(B\) of \(S\) are glued along \(S\), the induced subgraphs \(A[S]\) and \(B[S]\) coincide. Consequently, \(S\) can only be realized as a complete \(\ell(S)\)-partite hypergraph, since the small branch has at most as many normal atoms as the big branch. 

Now we can obtain an upper bound on the number of edges of a normal separator tree. Example~\ref{ex1} will demonstrate that this bound is tight.

\begin{theorem}\label{normal2}
Let \(T_N\) be a normal separator tree with \(t + k\) vertices and let \(\mathcal{S}\) be the set of its separators. Then the number of edges \(e(T_N)\) satisfies
\begin{equation}\label{eq:normal_abstract_tree}
\begin{aligned}
e(T_N) \leq \binom{t+k}{r} - \binom{t}{r} +\frac{t}{ck}\binom{ck}{r}
        + \left(\frac{t}{ck} - 1\right)\binom{k}{r} 
        - \frac{t}{2ck} \sum_{i = 0}^{\infty} \binom{k / 2^{\,i}}{r}
        + f(k, r, L(T_N)) - X(T_N),    
\end{aligned}
\end{equation}
where \(L(T_N)\) is the number of normal atoms in \(\widehat{T_N}\), 
$f(k, r, L) = \frac{L}{2} \sum_{i = 0}^{\infty} \binom{k / (2^i L)}{r}$,
and \(X(T_N) = \sum_{S \in \mathcal{S}} X(S)\) is the branch-error term with 
\begin{equation}\label{eq:def_xs}
X(S) := f(k,r,\ell(S)+\ell(S,+))-f(k,r,\ell(S))-f(k,r,\ell(S,+))
      +\ell(S)\,\binom{\frac{k}{\ell(S)}}{r}.
\end{equation}
It will be shown in the proof that \(X(S) \ge 0\).
\end{theorem}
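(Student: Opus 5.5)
The plan is to start from Lemma~\ref{lem:edge_f(S)} with the orientation $\sigma$ given by normal-atom counts, bound each $f(S)$ by Lemma~\ref{normal1}, and then rearrange the resulting sum so that it telescopes along the tree. Write $L=L(T_N)$. Since every atom is normal, $T_N$ has $L$ atoms and $L-1$ separators.

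Each gluing removes $k$ vertices, so $t+k=\sum_{A}|A|-(L-1)k$. With $x_A:=|A|-k$ this gives $\sum_A x_A=t$, where $k/3\le x_A\le ck$. Lemma~\ref{lem:edge_f(S)} and Lemma~\ref{normal1} then yield
\[
e(T_N)\le \binom{t+k}{r}-\binom{t}{r}+\sum_{A}\binom{x_A}{r}+(L-1)\binom{k}{r}-\sum_{S\in\mathcal S}\ell(S)\binom{k/\ell(S)}{r}.
\]

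The key step is to rewrite the last sum using the definition \eqref{eq:def_xs}. It gives $\ell(S)\binom{k/\ell(S)}{r}=X(S)-\bigl[f(k,r,\ell(S)+\ell(S,+))-f(k,r,\ell(S))-f(k,r,\ell(S,+))\bigr]$. The bracket telescopes over the tree. For the subgraph node directly above $S$, the normal-atom count is $\ell(S)+\ell(S,+)$, and $f(k,r,\cdot)$ of every internal subgraph node appears once with a plus sign (from its own separator) and once with a minus sign (from its parent separator). Summing therefore leaves $f(k,r,L)-L\,f(k,r,1)$. Note that $f(k,r,1)=\tfrac12\sum_{i\ge0}\binom{k/2^i}{r}$. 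Setting $D:=\binom{k}{r}-f(k,r,1)$, the bound becomes
\[
e(T_N)\le \binom{t+k}{r}-\binom{t}{r}+\sum_A\Bigl(\binom{x_A}{r}+D\Bigr)-\binom{k}{r}+f(k,r,L)-X(T_N).
\]
It then remains to show $\sum_A\bigl(\binom{x_A}{r}+D\bigr)\le \frac{t}{ck}\bigl(\binom{ck}{r}+D\bigr)$. This reproduces exactly the terms $\frac{t}{ck}\binom{ck}{r}+\frac{t}{ck}\binom{k}{r}-\frac{t}{2ck}\sum_i\binom{k/2^i}{r}$ in \eqref{eq:normal_abstract_tree}.

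The main obstacle is this per-atom inequality. I would prove $g(x):=\binom{x}{r}+D\le \frac{x}{ck}g(ck)$ for $x\in[k/3,ck]$, and this is where the normality threshold $4k/3$ enters. Since $g$ is convex, $g(x)/x$ is quasi-convex on the interval, so it suffices to check the endpoints; the endpoint $x=ck$ is trivial. At $x=k/3$ the inequality is equivalent to $3c\binom{k/3}{r}+(3c-1)D\le\binom{ck}{r}$. The case $c=1$ reduces to
\[
\sum_{i\ge1}\binom{k/2^i}{r}\ge 3\binom{k/3}{r},
\]
which I would verify using $\binom{k/2}{r}\ge(3/2)^r\binom{k/3}{r}$ together with the $\binom{k/4}{r}$ term. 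Since $(3/2)^r\ge 3.375$ for $r\ge3$, this has room to spare. I would then argue that increasing $c$ only helps, because $\binom{ck}{r}$ grows like $c^r$ while the left side grows linearly in $c$.

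Finally, to show $X(S)\ge0$, I would write $\varphi(m)=m\binom{k/m}{r}$, so that $f(k,r,m)=\tfrac12\sum_{i\ge0}2^{-i}\varphi(2^i m)$. Here $\varphi$ is nonincreasing and convex, and $\varphi(m)\le\varphi(a)$ whenever $m\ge a$. With $a=\ell(S)\le b=\ell(S,+)$, I would compare $f(a+b)$ with $f(a)+f(b)-\varphi(a)$ termwise in $i$. The key point is that the extra $\varphi(a)$ dominates the half-sum $\tfrac12\sum_i2^{-i}\varphi(2^ia)$ appearing in $f(a)$, since that sum is at most $\varphi(a)$. The remaining comparison, $f(a+b)\ge f(b)$-type terms, follows from $a+b\le 2b$ and the doubling structure of the series.
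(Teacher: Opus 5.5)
Your telescoping argument is correct. It is the unrolled form of the paper's induction on branches: summing $\ell(S)\binom{k/\ell(S)}{r}=X(S)-[\,f(\ell(S)+\ell(S,+))-f(\ell(S))-f(\ell(S,+))\,]$ over all separators leaves $f(k,r,L)-L\,f(k,r,1)$. The residual per-atom inequality $\binom{x}{r}+D\le\frac{x}{ck}\bigl(\binom{ck}{r}+D\bigr)$ is exactly the paper's base-case inequality $M_p\ge 0$.

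Your proof of that inequality takes a different route from the paper's three-case analysis on $[k,ck]$, $[k/2,k)$, $[k/3,k/2)$, and it is cleaner. Since $x\mapsto\binom{x}{r}+D-\lambda x$ is convex, $\bigl(\binom{x}{r}+D\bigr)/x$ is quasi-convex, so only $x=k/3$ needs checking. At $c=1$ this reduces to $\sum_{i\ge1}\binom{k/2^i}{r}\ge 3\binom{k/3}{r}$, which already follows from the $i=1$ term. You should make ``increasing $c$ only helps'' quantitative. For instance, use $\binom{ck}{r}\ge c^r\binom{k}{r}$ together with the fact that the linear side has slope $3\binom{k/3}{r}+3D\le(\tfrac19+\tfrac32)\binom{k}{r}<r\binom{k}{r}$, where $D\le\tfrac12\binom{k}{r}$. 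This part is routine.

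The gap is in $X(S)\ge 0$. With $a=\ell(S)\le b=\ell(S,+)$, you effectively write $X(S)=[\varphi(a)-f(a)]+[f(a+b)-f(b)]$. You then argue that the first bracket is nonnegative and that the second is handled by ``$f(a+b)\ge f(b)$-type'' comparisons. But $\varphi$ is nonincreasing, so $f(k,r,\cdot)$ is nonincreasing and the second bracket is $\le 0$. Moreover, the inequality is tight: the doubling identity $2f(a)=f(2a)+\varphi(a)$ (Claim~\ref{equal}) gives $X(S)=0$ whenever $a=b$.

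So knowing only $\varphi(a)\ge f(a)$ throws away exactly the quantity you need. A termwise comparison with $\varphi(a)$ spread evenly over $i$ also fails already at $i=0$, since it would require $\varphi(a+b)\ge\varphi(b)$. What you must prove is precisely $f(b)-f(a+b)\le \varphi(a)-f(a)$.

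The missing step is as follows. First rewrite the right side as $f(a)-f(2a)$ using the doubling identity. Then compare termwise using the convexity of $\varphi$, which you already noted. Since $2^ib\ge 2^ia$, the drop of $\varphi$ over $[2^ib,\,2^i(a+b)]$ is at most its drop over $[2^ia,\,2^{i+1}a]$, because both intervals have length $2^ia$. Summing with weights $2^{-i-1}$ gives $f(b)-f(a+b)\le f(a)-f(2a)$; this is the paper's Claim~\ref{convex}.

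Your alternative hint via $a+b\le 2b$ only gives $f(b)-f(a+b)\le\varphi(b)-f(b)$. That would then require $m\mapsto\varphi(m)-f(m)$ to be nonincreasing. This is true here, since it follows from convexity of $u\mapsto\binom{u}{r}/u$, but it does not follow from convexity of $\varphi$ alone. For example, it fails for $\varphi(y)=\max(0,1-y)$, and it is not in your sketch.
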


\begin{proof}
We proceed by induction on the branches \(P\) of \(T_N\), following the tree structure of \(T_N\). Thus, we view \(P\) as a subtree of \(T_N\). 
Let \(p = |P| - k\).
Let \(\mathcal{A}_P\) and \(\mathcal{S}_P\) be the sets of atoms and separators below \(P\), respectively.
We define
\[
\tilde{e}(P) := \binom{p + k}{r} - \binom{p}{r} + \sum_{A \in \mathcal{A}_P} \binom{|A| - k}{r} + \sum_{S \in \mathcal{S}_P} f(S).
\]
By Lemma~\ref{lem:edge_f(S)}, \(e(P)\leq \tilde{e}(P)\), hence it suffices to show that \(\tilde{e}(P)\) is at most the right-hand side of \eqref{eq:normal_abstract_tree}.

\bigskip

\noindent \textbf{The base case.}  For the base case, \(P\) consists of a single atom. 
Then \(L(P) = 1\) and \(\frac{k}{3} \le p  \le ck\).
We have \(X(T_N) = 0\) and 
\(f(k, r, L(P)) = f(k, r, 1) = \frac{1}{2} \sum_{i=0}^{\infty} \binom{k / 2^{i}}{r}.\)

Since \(\tilde{e}(P) = \binom{p + k}{r}\), to verify \eqref{eq:normal_abstract_tree} it suffices to show
\begin{equation}\label{eq:base_normal}
M_p := \frac{p}{ck}\binom{ck}{r}-\binom{p}{r}+\left(\frac{p}{ck}-1\right)\binom{k}{r}+\left(\frac{1}{2}-\frac{p}{2ck}\right)\sum_{i=0}^\infty \binom{k/2^i}{r}\geq 0.
\end{equation}
We prove it case by case.
\medskip

\noindent\textbf{Case 1: } \(k\leq p\leq ck\).  
Then \(\binom{p}{r} \le \left(\frac{p}{ck}\right)^{r} \binom{ck}{r}\) and \(\binom{k}{r} \le \frac{1}{c^{r}} \binom{ck}{r}\).  
Inserting these bounds into \eqref{eq:base_normal} and keeping only the first term of the sum, we obtain
\[
M_p\geq \Bigl[\frac{p}{ck}
- \Bigl(\frac{p}{ck}\Bigr)^{r} - \Bigl(\frac{1}{2}-\frac{p}{2ck}\Bigr)\frac{1}{c^{r}}\Bigr] \binom{ck}{r}.
\]
Hence it suffices to prove that
\(h_1\!\left(\frac{p}{ck}\right) = \frac{p}{ck} - \left(\frac{p}{ck}\right)^{r} - \left(\frac{1}{2} - \frac{p}{2ck}\right) \frac{1}{c^{r}} \ge 0\).
Set \(x = \frac{p}{ck}\). The derivative \(h_1'(x) = 1 - r x^{r-1} + \frac{1}{2c^{r}}\) is decreasing, hence it has at most one zero on \(x \in [1/c, 1]\).  
Checking the endpoints:
\(h_1(1/c) = \frac{1}{c} - \frac{1}{c^{r}} - \left(\frac{1}{2} - \frac{1}{2c}\right) \frac{1}{c^{r}} \ge 0\),
\(h_1(1) = 0\).
Thus \eqref{eq:base_normal} holds when \(k \le p \le ck\).

\medskip
\noindent\textbf{Case 2: } \(\frac{k}{2} \le p < k\).  
Then \(\binom{p}{r} \le \left(\frac{p}{k}\right)^{r} \binom{k}{r}\) and \(\binom{ck}{r} \ge c^{r} \binom{k}{r}\).  
Substituting these bounds into \eqref{eq:base_normal} and keeping only the first term of the sum, we obtain
\[
M_p\geq \Bigl[c^{r-1}\frac{p}{k} - \left(\frac{p}{k}\right)^r- \frac{1}{2} + \frac{p}{2ck} \Bigr]\binom{k}{r}.
\]
Hence it suffices to verify
\[
h_2(p) = c^{r-1} \frac{p}{k} - \left(\frac{p}{k}\right)^{r} - \frac{1}{2} + \frac{p}{2ck} \ge 0 \qquad \left(p \in \left[\tfrac{k}{2}, k\right]\right).
\]
The derivative \(h_2'(p) = \frac{c^{r-1}}{k} - \frac{r p^{r-1}}{k^{r}} + \frac{1}{2ck}\) is decreasing in \(p\) and therefore has at most one zero, so it suffices to check the endpoints:
\[
h_2\!\left(\frac{k}{2}\right) = \frac{c^{r-1}}{2} - \frac{1}{2^{r}} - \frac{1}{2} + \frac{1}{4c} \ge 0,
\qquad
h_2(k) = c^{r-1} - 1 - \frac{1}{2} + \frac{1}{2c} \ge 0.
\]
These inequality holds because for \(c \ge 1\) we have \(c^{r-1} + \frac{1}{2c} \ge \frac{3}{2}\).

\medskip
\noindent\textbf{Case 3: } \(\frac{k}{3} \le p < \frac{k}{2}\).  
We have \(\binom{p}{r} \le \left(\frac{2p}{k}\right)^{r} \binom{k/2}{r}\) and \(\binom{ck}{r} \ge c^{r} \binom{k}{r}\). Substituting these bounds into \eqref{eq:base_normal} and keeping the first two terms of the sum, we obtain
\begin{equation}
M_p\geq \Bigl(\frac{p\,c^{r-1}}{k}-\frac12 + \frac{p}{2ck}\Bigr)\binom{k}{r}
+ \Bigl(\frac12 - \frac{p}{2ck} - \Bigl(\frac{2p}{k}\Bigr)^{\!r}\Bigr)
  \binom{\frac{k}{2}}{r}.
\label{eq:case3_bound}
\end{equation}
Since \(\frac{p c^{r-1}}{k} - \frac{1}{2} + \frac{p}{2ck} \ge \frac{3p}{2k} - \frac{1}{2} \ge 0\) for \(p \ge k/3\) (recall \(c \ge 1\)) and $\binom{k}{r}\ge 2^r \binom{k/2}{r}$, the right-hand side of \eqref{eq:case3_bound} is bounded below by
\[
\left[ \left( -\frac{1}{2} + \frac{3p}{2k} \right) 2^{r}
      + \frac{1}{2} - \frac{p}{2k} - \left(\frac{2p}{k}\right)^{r} \right] \binom{k/2}{r}.
\]
Define
\[
h_3\left(\frac{p}{k}\right) = \left( -\frac{1}{2} + \frac{3p}{2k} \right) 2^{r}
        + \frac{1}{2} - \frac{p}{2k} - \left(\frac{2p}{k}\right)^{r}.
\]
It suffices to show \(h_3(x)\geq 0\) for \(\frac{1}{3}\leq x \leq \frac{1}{2}\) For \(r \ge 3\), the derivative satisfies
\[
h_3'(x) = 3 \cdot 2^{r-1}- \frac{1}{2} - 2^r\cdot r\cdot x^{r-1} 
        \ge   3 \cdot 2^{r-1} - \frac{1}{2} - 2r > 0,
\]
so \(h_3\) is increasing on \([1/3, 1/2)\). Moreover,
\(h_3(1/3) = \frac{1}{3} - \left(\frac{2}{3}\right)^{r} \ge 0\) for \(r \ge 3\).
Hence \(h_3(p) \ge 0\) for all \(p \in [k/3, k/2)\), which establishes \eqref{eq:case3_bound}.

Combining these three cases, inequality \eqref{eq:base_normal} holds for every \(p \in [k/3, ck]\).

\bigskip
\noindent \textbf{The induction step.} For the induction step, let \(P\) be a branch whose abstract tree contains at least one separator node. Let \(S\) be the separator node directly below \(P\), and let \(A\) and \(B\) be the two branches of \(S\) with \(A\) being the small branch. 
Let \(a=|A|-k\) and \(b=|B|-k\). 

Applying the induction hypothesis to \(A\) and \(B\), we obtain:
\begin{align*}
\tilde{e}(A) &\le \binom{k + a}{r} - \binom{a}{r} + \frac{a}{ck}\binom{ck}{r} + \left(\frac{a}{ck} - 1\right)\binom{k}{r} - \frac{a}{2ck} \sum_{i=0}^{\infty} \binom{k / 2^{i}}{r} + f(k, r, \ell(S)) - X(A), \\
\tilde{e}(B) &\le \binom{k + b}{r} - \binom{b}{r} + \frac{b}{ck}\binom{ck}{r} + \left(\frac{b}{ck} - 1\right)\binom{k}{r} - \frac{b}{2ck} \sum_{i=0}^{\infty} \binom{k / 2^{i}}{r} + f(k, r, \ell(S,+)) - X(B).
\end{align*}
By the structure of \(P\), we have \(p = a + b\) and \(L(P)=\ell (S)+ \ell(S,+)\). Moreover,
\begin{align}
\tilde{e}(P) &= \tilde{e}(A) + \tilde{e}(B) + \binom{a + b + k}{r} - \binom{a + k}{r} - \binom{b + k}{r} + \binom{k}{r} - \binom{a + b}{r} + \binom{a}{r} + \binom{b}{r} - \binom{k}{r} + f(S) \notag \\
&\le \binom{k + p}{r} - \binom{p}{r} + \frac{p}{ck}\binom{ck}{r} + \left(\frac{p}{ck} - 1\right)\binom{k}{r} - \frac{p}{2ck} \sum_{i=0}^{\infty} \binom{k / 2^{i}}{r} \notag \\
&\quad + f(k, r, \ell(S)) + f(k, r, \ell(S,+)) - \binom{k}{r} + f(S) - X(P) + X(S). \label{eq:edge_estimate}
\end{align}

From Lemma~\ref{normal1} we obtain the estimate
\begin{equation}
f(S) \le \binom{k}{r} - \ell(S) \cdot \binom{k / \ell(S)}{r}.\label{eq:f_estimate}
\end{equation}
Combining \eqref{eq:edge_estimate}, \eqref{eq:f_estimate} and the definition of \(X(S)\) in \eqref{eq:def_xs}, we obtain the bound in \eqref{eq:normal_abstract_tree}.
The only remaining part is to show \(X(S)\geq 0\).

\bigskip
\noindent \textbf{Proof of \(X(S) \ge 0\).}
It suffices to prove that for \(1 \le \ell \le \ell'\),
\begin{equation*}
f(k, r, \ell + \ell') - f(k, r, \ell) - f(k, r, \ell') + \ell \cdot \binom{k / \ell}{r} \ge 0.
\end{equation*}

Recall that \(f(k, r, L) = \frac{L}{2} \sum_{i=0}^{\infty} \binom{k / (2^{i} L)}{r}\). 
Fix \(k\) and \(r\), and define functions \(g_i\) by
\[
g_i(x) = x k \cdot \binom{1 / (2^{i} \cdot x)}{r},
\]
and set \(g(x) = \frac{1}{2} \sum_{i=0}^{\infty} g_i(x)\). 
Then \(f(k, r, k x) = g(x)\).
Note that \(g(x) = 0\) for \(x > \frac{1}{r-1}\). 
Thus it suffices to show that for any \(0 < x < y \le \frac{1}{r-1}\),
\begin{equation}\label{eq:g_geq_0}
g(x) + g(y) - x k \cdot \binom{1 / x}{r} \le g(x + y).
\end{equation}
We establish this inequality using the following two claims.

\begin{claim}\label{convex}
    For each \(i \in \mathbb{N}^+\) and \(x \in \mathbb{R}^+\), the function \(\tilde{g}_i(y)=g_i(y) - g_i(x + y)\) is decreasing on \(\mathbb{R}^+\).
    Consequently, \(\tilde{g}(y)=g(y) - g(x + y)\) is decreasing on \(\mathbb{R}^+\).
\end{claim}

\begin{claim}\label{equal}
    For each \(x \in \bigl(0, \frac{1}{r-1}\bigr]\),
    \[
    2g(x) = g(2x) + x k \cdot \binom{1 / x}{r}.
    \]
\end{claim}

\noindent Assuming these two claims, we prove \eqref{eq:g_geq_0} as follows.
Since \(g(y) - g(x + y)\) is decreasing in \(y\) by Claim~\ref{convex}, we have
\[
g(y) - g(x + y) \leq g(x) - g(2x).
\]
Applying Claim~\ref{equal} we obtain
\[
g(y)-g(x+y) \le g(x) - 2g(x) + y k \cdot \binom{1 / y}{r},
\]
implying \eqref{eq:g_geq_0}.

\begin{proof}[Proof of Claim~\ref{convex}]
    For \(y \in \bigl(0, \frac{1}{2^{i}(r-1)}\bigr]\), we can write
    \[
    g_i(y) = \frac{k}{r! \cdot 2^{i}} \left( \frac{1}{2^{i} y} - 1 \right) \left( \frac{1}{2^{i} y} - 2 \right) \cdots \left( \frac{1}{2^{i} y} - (r-1) \right).
    \]
    For each \(j \le r-1\), the function \(\frac{1}{2^{i} y} - j\) is non-negative, decreasing, and convex on \(\bigl(0, \frac{1}{2^{i}(r-1)}\bigr]\). Hence \(g_i(y)\), being a positive constant multiple of the product of such functions, is also non-negative, decreasing, and convex on the same interval.

    If \(x + y \le \frac{1}{2^{i}(r-1)}\), then the convexity of \(g_i\) implies that \(g_i(y) - g_i(x + y)\) is decreasing.
    Otherwise, when \(x + y > \frac{1}{2^{i}(r-1)}\), we have \(g_i(x + y) = 0\), and hence \(g_i(y) - g_i(x + y) = g_i(y)\), which is decreasing as well. This completes the proof.
\end{proof}

\begin{proof}[Proof of Claim~\ref{equal}]
    Define \(i_x = \bigl\lfloor -\log_2 x - \log_2 (r-1) \bigr\rfloor\).
    Then \(g(x)\) can be expressed as
    \[
    g(x) = \frac{x k}{2} \sum_{i=0}^{i_x} \binom{1 / (2^{i} x)}{r}.
    \]
    Since \(i_{2x} = i_x - 1\), we have
    \begin{align*}
        g(2x) &= \frac{2x k}{2} \sum_{i=0}^{i_{2x}} \binom{1 / (2^{i} \cdot 2x)}{r} = x k \sum_{i=0}^{i_x - 1} \binom{1 / (2^{i+1} x)}{r} = x k \sum_{i=1}^{i_x} \binom{1 / (2^{i} x)}{r} = 2g(x) - x k \binom{1 / x}{r}.
    \end{align*}
    Rearranging yields the desired identity.
\end{proof}
This establishes \(X(S) \ge 0\) and completes the proof of Theorem~\ref{normal2}.
\end{proof}

Note that \(L(T)\) and \(X(T)\) are invariant among separator trees that share the same abstract tree. 
For any normal separator tree \(T\) and its abstract tree \(\widehat{T}\), we define \(L(\widehat{T}) = L(T)\), \(X(\widehat{T}) = X(T)\) and
\begin{equation}\label{eq:normal_abstract_def}
e(\widehat{T}) = \binom{t+k}{r} - \binom{t}{r} + \frac{t}{ck}\binom{ck}{r}
        + \left(\frac{t}{ck} - 1\right)\binom{k}{r} 
        - \frac{t}{2ck} \sum_{i = 0}^{\infty} \binom{k / 2^{\,i}}{r}
        + f(k, r, L(\widehat{T})) - X(\widehat{T}).
\end{equation}
By Theorem~\ref{normal2}, this definition is well-defined and satisfies Corollary~\ref{cor:edge_abstract_tree}.

\section{Tiny atoms}\label{tiny}
We now proceed to the non-normal case.
For a separator tree, we first define a \emph{tiny vertex}: a vertex \(v\) is called a \emph{tiny vertex} if it belongs to a tiny atom \(A\) but not to the separator directly above \(A\). 
Thus, the number of tiny vertices in a tiny atom \(A\) is \(|A| - k\).
We may assume that if a separator \(S\) has two atoms \(A\) and \(B\) directly below it, then \(|A|+|B|\geq (c+2)k\), since otherwise we could merge them into a larger atom without deleting any edges.
As a consequence, no separator has both branches being tiny atoms. 

For a separator tree \(T\) and a separator \(S\) of \(T\), we designate its \emph{big branch} according to the following rules:
\begin{enumerate}
    \item If one branch of \(S\) contains strictly more normal atoms than the other, it is chosen as the big branch.
    \item If both branches contain the same number of normal atoms, the branch containing more tiny vertices is selected as the big branch.
    \item If the two branches are also equal in the number of tiny vertices, the big branch is chosen arbitrarily.
\end{enumerate}
The other branch is called the \emph{small branch}. The \emph{reach} of a separator is defined as the number of normal atoms in its small branch; it is denoted by \(\ell(S)\) as defined earlier. This definition yields an orientation \(\sigma\) of the separator tree.

Except that we cannot distinguish tiny vertices in an abstract tree, these definitions are also valid for abstract trees.
 
\begin{definition}
Let \(T\) be a separator tree. For each tiny vertex \(v\), let \(S_v\) be the first (lowest) separator in \(T\) satisfying:
\begin{itemize}
    \item the tiny atom containing \(v\) lies in the small branch of \(S_v\);
    \item \(v \in S_v\) and \(S_v\) is not the separator directly above the tiny atom of \(v\).
\end{itemize}
We then say that \(v\) is \emph{assigned} to \(S_v\).
\end{definition}

\begin{definition}[Technical data]
Let \(T\) be a separator tree. 
For each pair \((A,S)\), where \(A\) is a tiny atom in the small branch of a separator node \(S\) and \(S\) is not the separator node directly above \(A\), we introduce a function \(m(A,S)\) as the number of tiny vertices of \(A\) that are assigned to \(S\) in \(T\). It is straightforward to verify that this definition satisfies the following properties. 
\begin{enumerate}
   \item For a fixed tiny atom \(A\), \(\displaystyle\sum_{S \in \mathcal{S}} m(A,S)\) does not exceed the total number of tiny vertices in \(A\).
    \item For a fixed separator \(S\), \(\displaystyle\sum_{A \in \mathcal{T}} m(A,S) \le k\), the size of the separator.
\end{enumerate}
where we denote by \(\mathcal{S}\) and \(\mathcal{T}\) the sets of separators and tiny atoms in \(T\), respectively. We denote $m(S)=\sum_{A \in \mathcal{T}}m(A,S)$.
\end{definition}
\begin{lemma}\label{technical data}
 Let  \( S \) be a separator in a separator tree \(T\), the number $f(S)$ of free anti-edges in \(S\) within the small branch  of $S$ is at most
\[
f(S) \le \binom{k}{r} 
       - \bigl(\ell(S)-q\bigr)\cdot\binom{\bigl\lfloor\frac{k - m(S)}{\ell(S)}\bigr\rfloor}{r}
       - q\cdot\binom{\bigl\lceil\frac{k - m(S)}{\ell(S)}\bigr\rceil}{r}
       - \sum_{A} \binom{m(A, S)}{r},
\]
where 
\begin{itemize}
    \item  $\ell(S) \ne 0$ and \( q = \bigl(k - m(S)\bigr) \bmod \ell(S) \).
    \item the sum runs over all tiny atoms \( A \) in the small branch of $S$ that are not directly below \( S \).
\end{itemize}
Moreover, if \(\ell(S)=0\) then \(f(S)=0\).
\end{lemma}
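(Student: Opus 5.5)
We extend the argument of Lemma~\ref{normal1}: we partition the vertices of \(S\) into groups such that every \(r\)-set lying inside a single group is \emph{not} free in \(S\). Then \(f(S)\) is at most \(\binom{k}{r}\) minus the number of \(r\)-sets inside groups. The new feature is that, besides groups indexed by the normal atoms of the small branch, we also use groups indexed by tiny atoms \(A\) in the small branch not directly below \(S\). For such \(A\), the group consists of the \(m(A,S)\) tiny vertices of \(A\) assigned to \(S\).

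\textbf{Groups of tiny vertices.} Fix such a tiny atom \(A\) and let \(U_A\) be the set of tiny vertices of \(A\) assigned to \(S\); these lie in \(S\). We must show that no anti-edge \(e\subseteq U_A\) is free in the small branch \(P\) of \(S\). Follow the recursive process of Definition~\ref{def:orientation_and_free} starting from \(P\). At each separator \(S'\) strictly between \(P\) and \(A\), the set \(U_A\) cannot meet both sides outside \(S'\), since it lies inside \(A\), which is on one side. So \(e\) is never free or bonded at \(S'\), and the process descends. We claim it descends along the path toward \(A\). If \(e\subseteq S'\) and \(A\) lies in the small branch of \(S'\), then rule 2(c) sends the process into the big branch, which is the wrong way. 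But this situation would mean the vertices of \(e\) belong to \(S'\), with \(A\) in the small branch of \(S'\) and \(S'\) not directly above \(A\). Then \(S'\) would be a lower separator satisfying the assignment conditions, contradicting the minimality in the choice of \(S_v\).

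The remaining cases are handled as follows.
\begin{itemize}
\item If \(S'\) is the separator directly above \(A\), the tiny vertices avoid \(S'\), so \(e\not\subseteq S'\). Hence \(e\) lies in the branch containing \(A\) minus \(S'\), and the process enters \(A\).
\item If \(A\) is in the big branch of \(S'\) and \(e\subseteq S'\), rule 2(c) sends the process into the big branch, which is the correct way.
\end{itemize}
So the process ends at \(A\), and \(e\) is atomic, not free. This induction along the path is the main point to check carefully. In particular, the first bullet is exactly where the exclusion of tiny atoms directly below \(S\) enters: for those atoms, the separator directly above is \(S\) itself.

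\textbf{Normal-atom groups.} Every other vertex of \(S\) (there are \(k-m(S)\) of them) lies in some normal atom of the small branch. This holds because each vertex of a separator lies in an atom within each branch. A vertex of \(S\) lying only in tiny atoms would have to be a tiny vertex of one of them, but then the lowest such \(S\) with the stated properties would have it assigned. (Making this precise may require some care, possibly using that no separator has two tiny branches.) As in Lemma~\ref{normal1}, an anti-edge inside one normal atom is not free in \(P\): tracing the process toward that atom gives the same path argument without obstruction. The case where a separator \(S'\) contains \(e\) with that atom on the small side is resolved by reassigning to whichever normal atom the process reaches. Grouping these \(k-m(S)\) vertices into \(\ell(S)\) classes and minimizing \(\sum\binom{|\text{group}|}{r}\) via the balanced partition gives the floor/ceiling terms. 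The tiny groups contribute \(\sum_A\binom{m(A,S)}{r}\).

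\textbf{The case \(\ell(S)=0\).} Then the small branch contains no normal atom. Since no separator has two tiny branches as children, the small branch must be a single tiny atom directly below \(S\). The process stops there immediately, so every anti-edge in \(S\) is atomic, and \(f(S)=0\).

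The main obstacle is the path argument for tiny groups, namely verifying that minimality of \(S_v\) exactly prevents diversion into big branches.
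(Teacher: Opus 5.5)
Your treatment of the tiny classes is correct and matches the paper's reasoning. Tiny vertices avoid the separator directly above their atom. Minimality of $S_v$ forbids any separator below $S$ that contains them from having $A$ on its small side, so the recursive process is never diverted.

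\textbf{The gap is in the normal classes.} Grouping the remaining vertices by \emph{some normal atom containing them} does not make the $r$-sets inside a group non-free. For instance, let $S'$ be a separator inside the small branch $P$ whose small side is a normal atom $N$, and whose big side is split further by $S''$ into normal atoms $N_2,N_3$. An anti-edge $e\subseteq S\cap S'$ lies in $N$. Yet rule 2(c) sends it into the big side of $S'$, where it can meet both $N_2\setminus S''$ and $N_3\setminus S''$ and hence be free. So your sentence ``an anti-edge inside one normal atom is not free in $P$'' is false. ``Reassigning to whichever normal atom the process reaches'' does not repair it as stated: the process of an anti-edge need not reach any atom (here it stops at $S''$), and the count needs one partition of $S$ fixed in advance, not one depending on $e$.

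\textbf{The missing idea is the paper's per-vertex rule.} For $u\in S$, let $P_u$ be the downward path from the small branch of $S$ that, at each separator, takes the branch containing $u$, or the big branch if $u$ lies in that separator. Let $A_u$ be its end atom. All vertices with the same $A_u$ share this path, and the recursive process of any $r$-set among them follows it and ends atomically. Hence the classes $\{u:A_u=X\}$ form a genuine partition whose internal $r$-sets are non-free. This also gives the disjointness of the sets $U_A$, which you use implicitly when counting $k-m(S)$ remaining vertices.

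You still need the converse you left open: if $A_u$ is a tiny atom $A$, then $u$ is counted in $m(A,S)$. This holds for two reasons.
\begin{itemize}
\item $A$ is always the small branch of the separator $S_A$ directly above it, since its sibling is, or contains, a normal atom. So $u\in S_A$ would send $P_u$ away from $A$; hence $u$ is a tiny vertex of $A$.
\item Every separator on $P_u$ containing $u$ has $A$ on its big side. So no separator below $S$ qualifies, and $u$ is assigned to $S$.
\end{itemize}
With this, exactly $m(A,S)$ vertices fall in the class of each tiny $A$. The other $k-m(S)$ fall into at most $\ell(S)$ normal classes, which gives the floor/ceiling bound. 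Your $\ell(S)=0$ case agrees with the paper.
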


\begin{proof}
We consider the separator tree \(T\).

First, the special case where no tiny vertex is assigned to \(S\) is handled by Lemma~\ref{normal1}.

Let \(u\) be a vertex in \(S\). 
We define a path \(P_u\) in \(T\) that starts in the small branch at \(S\) and proceeds downward as follows: at each separator node, if \(u\) appears in only one branch, \(P_u\) follows that branch; otherwise, \(P_u\) follows the big branch. 
The path ends at an atom, which we denote by \(A_u\). 
Then, for any tiny atom \(A\), we have \(A_u = A\) if and only if \(u\) is counted in \(m(A,S)\).

Recall that \(f(S)\) is bounded by the number of anti-edges that are free in \(S\) within the small branch of \(S\). 
By Definition~\ref{def:orientation_and_free}, an anti-edge \(e\subseteq S\) is free in \(S\) only if it first belongs to the small branch of \(S\), and then, moving downward through the separator tree, its vertices are eventually separated by some lower separator into different parts. 
Therefore, if all the vertices of \(e\) always remain in the same part below \(S\), then \(e\) is not free in \(S\). 
Consequently, every anti-edge \(e\) that is free in \(S\) must contain two vertices \(u,v\) with \(A_u\neq A_v\).

We distribute the vertices of \(S\) according to the atoms to which they belong. Among the \(k\) vertices of \(S\), exactly \(m(S)\) belong to tiny atoms, with \(m(A,S)\) vertices belonging to each tiny atom \(A\), while the remaining \(k - m(S)\) vertices belong to the \(\ell(S)\) normal atoms in the small branch of \(S\).

An anti-edge contained entirely in the class of a single atom cannot be free in \(S\). 
Thus, to obtain an upper bound on the number of free anti-edges, we subtract all \(r\)-sets lying completely inside one such class.
For the tiny atoms, this contributes the term $\sum_A \binom{m(A,S)}{r}.$
For the normal atoms, the number of forbidden \(r\)-sets is minimized when the \(k - m(S)\) vertices are distributed as evenly as possible among the \(\ell(S)\) normal atoms. 
Writing
$k-m(S)=(\ell(S)-q)\Bigl\lfloor\frac{k-m(S)}{\ell(S)}\Bigr\rfloor
+q\Bigl\lceil\frac{k-m(S)}{\ell(S)}\Bigr\rceil,$
the resulting contribution is
$(\ell(S)-q)\dbinom{\Bigl\lfloor\frac{k-m(S)}{\ell(S)}\Bigr\rfloor}{r}
+q\dbinom{\Bigl\lceil\frac{k-m(S)}{\ell(S)}\Bigr\rceil}{r}.$
Subtracting these non-free \(r\)-sets from the total \(\binom{k}{r}\) completes the proof for \(\ell(S)\geq 1\).

If \(\ell(S) = 0\), then the small branch of \(S\) is a single tiny atom (otherwise it would violate the assumption that no separator has two tiny atoms directly below it), so \(f(S) = 0\).
\end{proof}

\section{The deleting operation}\label{delete}
In this section, we analyze the decrease in the number of edges when all tiny atoms are deleted from an abstract tree. The reason we use the abstract tree framework is that we cannot delete atoms directly from the separator tree while ignoring the detailed graph structure within separator nodes and subgraph nodes. The deletion proceeds as follows.

\begin{definition}[Deletion Operation]
If $A$ is a tiny atom of $\widehat{T}$, we delete the subgraph node $A$. Let $S$ be the separator immediately above $A$; by definition, the branch of $S$ that does not contain $A$ cannot be a single tiny atom. Denote by $Q$ the subgraph node directly below $S$ other than $A$, and by $P$ the subgraph node directly above $S$ in $\widehat{T}$. We delete the separator node $S$ and merge $P$ and $Q$ into a single subgraph node. We emphasize that the resulting abstract tree is normal.
\end{definition}

\begin{remark}
Note that this deletion operation is well-defined.
The resulting tree \(\widehat{T'}\) is indeed an abstract tree of some real hypergraph. 
Suppose that \(a_1, \ldots, a_m\) are the sizes of all atoms in \(\widehat{T'}\). 
Let \(W_1, \ldots, W_m, S\) be disjoint vertex sets of sizes \(a_1 - k, \ldots, a_m - k, k\), respectively, and let \(A_i = W_i \cup S\).
Let \(H'\) be a hypergraph such that \(V(H') = \bigcup_{i=1}^m A_i\) and
\(
E(H') = \left( \bigcup_{i=1}^m \binom{A_i}{r} \right) \cup \left\{ e \in \binom{V(H')}{r} : e \cap S \neq \emptyset \right\}.
\)
Then \(\widehat{T'}\) is an abstract tree of \(H'\).
\end{remark}

\begin{figure}[htbp]
\centering
\begin{tikzpicture}[scale=0.9, node distance=1.2cm, every node/.style={font=\small}]
    \begin{scope}[local bounding box=left]
        \node[circle, minimum size=1.8mm, draw=black, fill=black, line width=0.6pt] (v1) at (0,1.2) {};
        \node[circle, minimum size=1.8mm, draw=black, fill=black, line width=0.6pt] (v2) at (-0.6,0) {};
        \node[circle, minimum size=1.8mm, draw=black, fill=black, line width=0.6pt] (v3) at (0.6,0) {};
        \node[circle, minimum size=1.8mm, draw=black, fill=black, line width=0.6pt] (v4) at (0.4,-0.9) {};
        \node[circle, minimum size=1.8mm, draw=black, fill=black, line width=0.6pt] (v5) at (1.4,-0.9) {};
        
        \node[circle, minimum size=1.8mm, draw=blue!80!black, fill=blue!80!black, line width=0.6pt] (w1) at (0,0.6) {};
        \node[circle, minimum size=1.8mm, draw=blue!80!black, fill=blue!80!black, line width=0.6pt] (w2) at (1,-0.5) {};
        
        \draw[black, line width=0.8pt] (v1) -- (w1);
        \draw[black, line width=0.6pt] (w1) -- (v2);
        \draw[black, line width=0.6pt] (w1) -- (v3);
        \draw[black, line width=0.6pt] (v3) -- (w2);
        \draw[black, line width=0.6pt] (w2) -- (v4);
        \draw[black, line width=0.6pt] (w2) -- (v5);
        
        \node[font=\small\bfseries] at (-0.4,1.4) {$H$};
        \node[font=\small\bfseries] at (-1,0) {$A$};
        \node[font=\small\bfseries] at (1.3,0) {$B \cup C$};
        \node[font=\small\bfseries] at (0.4,-1.25) {$B$};
        \node[font=\small\bfseries] at (1.4,-1.25) {$C$};
        \node[font=\small\bfseries, color=blue!80!black] at (0.45,0.6) {$S_1$};
        \node[font=\small\bfseries, color=blue!80!black] at (1.45,-0.5) {$S_2$};
    \end{scope}
    
    \node[below, font=\footnotesize\itshape, align=center] at (left.south) 
        {(a) Before deleting $A$};
    
    \begin{scope}[xshift=6.5cm, local bounding box=right]
        \node[circle, minimum size=1.8mm, draw=black, fill=black, line width=0.6pt] (v3) at (0,0.9) {};
        \node[circle, minimum size=1.8mm, draw=black, fill=black, line width=0.6pt] (v4) at (-0.8,-0.9) {};
        \node[circle, minimum size=1.8mm, draw=black, fill=black, line width=0.6pt] (v5) at (1.2,-0.9) {};
        
        \node[circle, minimum size=1.8mm, draw=blue!80!black, fill=blue!80!black, line width=0.6pt] (w2) at (0.5,0) {};
        
        \draw[black, line width=0.6pt] (v3) -- (w2);
        \draw[black, line width=0.6pt] (w2) -- (v4);
        \draw[black, line width=0.6pt] (w2) -- (v5);
        
        \node[font=\small\bfseries] at (0,1.3) {$B \cup C$ $(H)$};
        \node[font=\small\bfseries] at (-0.8,-1.3) {$B$};
        \node[font=\small\bfseries] at (1.2,-1.3) {$C$};
        \node[font=\small\bfseries, color=blue!80!black] at (0.8,0.3) {$S_2$};
    \end{scope}
    
    \node[below, font=\footnotesize\itshape, align=center] at ([xshift=6.5cm]left.south) 
        {(b) After deleting $A$};

\end{tikzpicture}

\caption{The small branch of $S_1$ contains only a single tiny atom $A$. After deleting $A$, the separator $S_1$ disappears, and the two subgraph nodes that were directly above and below $S_1$—namely $H$ (above) and $B\cup C$ (below)—merge into a single subgraph node. }
\label{fig:separator_tree_decomposition}
\end{figure}
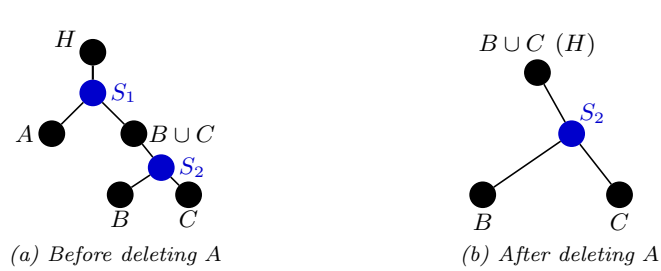

Performing this operation for every tiny atom of \(\widehat{T}\) yields a structure that can be verified to be again an abstract tree; moreover it is a normal abstract tree. We denote it by \(\widehat{T'}\). It is straightforward to verify the following description of the separator nodes in \(\widehat{T'}\).

\begin{itemize}
    \item If \(S\) is a separator node in \(\widehat{T}\) with reach \(0\), then \(S\) disappears after the deletion.
    \item If \(S\) is a separator node in \(\widehat{T}\) with reach at least \(1\), then \(S\) retains the same reach in \(\widehat{T'}\).
\end{itemize}

Next we analyze the change in the edge estimate caused by the deletion operation. 
Suppose that \(H \in \mathcal{F}_{c,r}(n,k)\) is a hypergraph with \(\widehat{T_H} = \widehat{T}\).
Let \(t+k\) and \(t'+k\) denote the numbers of vertices of \(\widehat{T}\) and \(\widehat{T'}\), respectively.
Let \(\mathcal{A}, \mathcal{A}'\) be the sets of atoms in \(\widehat{T}\) and \(\widehat{T'}\), respectively, and let \(\mathcal{S}, \mathcal{S}'\) be the sets of separators in \(\widehat{T}\) and \(\widehat{T'}\), respectively.
For each separator node \(S \in \mathcal{S}\), we define
\[
\widehat{f}(S) = \binom{k}{r} 
- \bigl(\ell(S) - q\bigr) \cdot \binom{\bigl\lfloor \frac{k}{\ell(S)} \bigr\rfloor}{r}
- q \cdot \binom{\bigl\lceil \frac{k}{\ell(S)} \bigr\rceil}{r},
\]
where \(q= k \bmod \ell(S)\).
Using the proof of Theorem~\ref{normal2}, it holds that
\begin{equation}\label{eq:after_deletion}
e(\widehat{T'}) \geq \binom{t'+k}{r} - \binom{t'}{r} + \sum_{A \in \mathcal{A}'} \binom{|A| - k}{r} + \sum_{S \in \mathcal{S}'} \widehat{f}(S).
\end{equation}
Then we consider the hypergraph \(H\) before deletion. 
By Lemma~\ref{lem:edge_f(S)}, 
\begin{equation}\label{eq:before_deletion}
e(H) \leq \binom{t+k}{r} - \binom{t}{r} + \sum_{A \in \mathcal{A}} \binom{|A| - k}{r} + \sum_{S \in \mathcal{S}} f(S) =  \binom{t+k}{r} - \binom{t}{r} + \sum_{A \in \mathcal{A}} \binom{|A| - k}{r} + \sum_{S \in \mathcal{S}'} f(S),
\end{equation}
where the equality holds because when \(S\in \mathcal{S}\setminus \mathcal{S}'\), \(\ell(S)=0\) and hence \(f(S)=0\).
Suppose that \(\mathcal{T}=\mathcal{A} \setminus \mathcal{A}' = \{A_1, A_2, \ldots, A_s\}\) be the set of tiny atoms and \(|A_i| = a_i + k\) for \(i = 1, 2, \ldots, s\).
Taking the difference between \eqref{eq:before_deletion} and \eqref{eq:after_deletion}, we obtain
\begin{equation}\label{eq:difference_deletion}
e(H) \leq e(\widehat{T'}) + \binom{t+k}{r} - \binom{t}{r} - \binom{t'+k}{r} + \binom{t'}{r} + \sum_{i=1}^s \binom{a_i}{r} + \sum_{S \in \mathcal{S}'} \bigl(f(S) - \widehat{f}(S)\bigr).
\end{equation}
We then reduce the problem of estimating the number of edges in a general hypergraph to estimating the number of edges in hypergraphs with a normal separator tree, together with the discrepancy between a general separator tree and a normal one.

\begin{definition}\label{essential}
We define the \emph{essential difference} of \(\widehat{T}\) as
\[
\mathcal{E}(\widehat{T}) = \binom{t+k}{r} - \binom{t}{r} - \binom{t - \sum_{i=1}^{s} a_i+k}{r} + \binom{t - \sum_{i=1}^{s} a_i}{r} + \sum_{i=1}^{s} \binom{a_i}{r},
\]
and for each \(S \in \mathcal{S}\), define the \emph{free difference} \(D_{f(S)} = f(S) - \widehat{f}(S)\). 
Note that \(t - t' = a_1 + a_2 + \cdots + a_s\). Consequently,
\begin{equation}\label{eq:delete_difference}
e(H) \leq e(\widehat{T'}) + \mathcal{E}(\widehat{T}) + \sum_{S \in \mathcal{S}'} D_{f(S)}.    
\end{equation}
\end{definition}

For \(\ell(S)=1\) the difference can be estimated directly. For \(\ell(S)\ge 2\) we give a uniform upper bound.

\begin{proposition}\label{free2}
For any separator node \(S\) with \(\ell(S) \ge 2\),
\begin{equation}\label{eq:dfs_bound}
D_{f(S)} \le \frac{m(S) r}{2^{r-1} k} \binom{k}{r}.
\end{equation}
\end{proposition}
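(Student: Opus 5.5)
We compare the bound on \(f(S)\) from Lemma~\ref{technical data} with the definition of \(\widehat{f}(S)\), and control the difference by a one-vertex-at-a-time removal argument. Fix \(S\) with \(\ell:=\ell(S)\ge 2\) and write \(m:=m(S)\). For an integer \(N\ge 0\), let
\[
\Phi(N)=\min\Bigl\{\textstyle\sum_{j=1}^{\ell}\binom{x_j}{r} : x_1+\dots+x_\ell=N,\ x_j\in\mathbb{Z}_{\ge 0}\Bigr\}.
\]
By the convexity argument already used in Lemma~\ref{normal1}, \(\Phi(N)\) is attained by the balanced partition. Hence \(\widehat{f}(S)=\binom{k}{r}-\Phi(k)\). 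Lemma~\ref{technical data} gives \(f(S)\le \binom{k}{r}-\Phi(k-m)-\sum_A\binom{m(A,S)}{r}\). Dropping the nonnegative tiny-atom sum, we get
\[
D_{f(S)}\le \Phi(k)-\Phi(k-m).
\]
So it suffices to show that \(\Phi(k)-\Phi(k-m)\le m\binom{k-1}{r-1}/2^{r-1}\). This is what \eqref{eq:dfs_bound} requires, since \(\frac{r}{k}\binom{k}{r}=\binom{k-1}{r-1}\).

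To bound \(\Phi(k)-\Phi(k-m)\) from above, start from the balanced partition of \(k\) into \(\ell\) parts, which has value \(\Phi(k)\) and largest part \(\lceil k/\ell\rceil\). Delete \(m\) vertices one at a time, each time from a currently largest part. Deleting a vertex from a part of size \(x\) lowers \(\sum_j\binom{x_j}{r}\) by exactly \(\binom{x-1}{r-1}\). Every part always has size at most \(\lceil k/\ell\rceil\), so each deletion lowers the sum by at most \(\binom{\lceil k/\ell\rceil-1}{r-1}\). The result is some partition of \(k-m\), whose value is at least \(\Phi(k-m)\). Therefore
\[
\Phi(k-m)\ge \Phi(k)-m\binom{\lceil k/\ell\rceil-1}{r-1}.
\]

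It remains to compare binomials. Since \(\ell\ge 2\), we have \(\lceil k/\ell\rceil-1\le \lceil k/2\rceil-1\le \frac{k-1}{2}\). By monotonicity of the extended binomial in its upper argument,
\[
\binom{\lceil k/\ell\rceil-1}{r-1}\le\binom{\frac{k-1}{2}}{r-1}=\frac{1}{(r-1)!}\prod_{j=0}^{r-2}\Bigl(\frac{k-1}{2}-j\Bigr).
\]
Each factor satisfies \(\frac{k-1}{2}-j\le\frac{k-1-j}{2}\), and all factors are nonnegative whenever the extended binomial is nonzero. Hence the right-hand side is at most \(2^{-(r-1)}\binom{k-1}{r-1}\), which completes the argument.

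The main point needing care is the first step. The bound of Lemma~\ref{technical data} is stated with floors and ceilings, while \(\widehat{f}(S)\) uses the balanced partition of \(k\). I would check that both genuinely equal \(\binom{k}{r}-\Phi(\cdot)\), and that the extended-binomial convention (zero below \(r-1\)) does not break the identity that deleting a vertex from a part of size \(x\) changes the sum by \(\binom{x-1}{r-1}\). For integer \(x\) this is Pascal's rule, valid for all \(x\ge 1\) under the convention.
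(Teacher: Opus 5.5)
Your reduction to $D_{f(S)}\le\Phi(k)-\Phi(k-m)$ is correct, and so is your closing comparison $\binom{\lceil k/\ell\rceil-1}{r-1}\le 2^{-(r-1)}\binom{k-1}{r-1}=2^{-(r-1)}\frac{r}{k}\binom{k}{r}$. The step in between, however, is a non sequitur as written: it uses an inequality that points the wrong way. Let $V$ be the value of the partition of $k-m$ you end with. You know two things. From the per-step bound, $V\ge\Phi(k)-m\binom{\lceil k/\ell\rceil-1}{r-1}$. Because $\Phi$ is a minimum, $V\ge\Phi(k-m)$. Two lower bounds on $V$ do not compare $\Phi(k-m)$ with $\Phi(k)-m\binom{\lceil k/\ell\rceil-1}{r-1}$. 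Exhibiting some partition of $k-m$ can only bound the minimum $\Phi(k-m)$ from \emph{above}. That bounds $\Phi(k)-\Phi(k-m)$ from below, which is the opposite of what you need. So the ``Therefore'' does not follow.

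The repair uses something you already set up but did not invoke. Each deletion is made from a currently largest part, so the part sizes stay within one of each other throughout. Hence the final partition is the balanced partition of $k-m$, and $V=\Phi(k-m)$ exactly, by the same convexity fact you quoted for $\Phi(k)$. Then $\Phi(k)-\Phi(k-m)$ equals the sum of the $m$ decrements, each at most $\binom{\lceil k/\ell\rceil-1}{r-1}$, and the rest of your argument goes through. Equivalently, you can run the process upward: start from an optimal partition of $k-m$ and add each vertex to a smallest part. Then minimality of $\Phi(k)$ points in the right direction.

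That upward version is exactly the paper's proof. It telescopes $g(s+1,\ell_0)-g(s,\ell_0)=\binom{\lfloor s/\ell_0\rfloor}{r-1}$ over $s=k-m,\dots,k-1$. Since $\lceil k/\ell\rceil-1=\lfloor (k-1)/\ell\rfloor$, your per-step bound is just its largest increment. Your final normalization via $\binom{k-1}{r-1}=\frac{r}{k}\binom{k}{r}$ is the correct one. The paper's last line writes $m\binom{k}{r-1}=m\frac{r}{k}\binom{k}{r}$, which is not an identity; it needs the observation $s\le k-1$, which your version builds in.
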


\begin{proof}
Define the function 
\[
g(s, \ell) = (\ell - q) \binom{\bigl\lfloor \frac{s}{\ell} \bigr\rfloor}{r} + q \binom{\bigl\lceil \frac{s}{\ell} \bigr\rceil}{r},
\qquad \text{where } q = s \bmod \ell.
\]
Fix a separator node \(S\). Let \(m = m(S) = \sum_A m(A,S)\) and \(\ell_0 = \ell(S)\).

Then, by Lemma~\ref{technical data}, the quantity \(f(S)\) is bounded above by 
\[
f(S) \le \binom{k}{r} - g(k - m, \ell_0) - \sum_{A \in \mathcal{T}} \binom{m(A,S)}{r}.
\]
By the definition of $\widehat{f}(S)$, we have
$\widehat{f}(S) = \binom{k}{r} - g(k, \ell_0)$,
hence 
\begin{equation}\label{eq:dfs_bound2}
D_{f(S)} = f(S) - \widehat{f}(S) \le g(k, \ell_0) - g(k - m, \ell_0) - \sum_{A \in \mathcal{T}} \binom{m(A,S)}{r}.
\end{equation}

Since the summation term is nonnegative, to establish \eqref{eq:dfs_bound} it suffices to show that for all \(\ell_0 \ge 2\),
\[
g(k, \ell_0) - g(k - m, \ell_0) \le \frac{m r}{2^{r-1} k} \binom{k}{r}.
\]

Observe that for any \(s\),
\[
g(s+1, \ell_0) - g(s, \ell_0) = \binom{\bigl\lfloor \frac{s}{\ell_0} \bigr\rfloor}{r-1} \le \binom{\bigl\lfloor \frac{s}{2} \bigr\rfloor}{r-1} \le \frac{1}{2^{r-1}} \binom{s}{r-1}.
\]
Summing this inequality over \(s = k - m, k - m + 1, \ldots, k - 1\) yields
\[
g(k, \ell_0) - g(k - m, \ell_0) \le \frac{1}{2^{r-1}} \sum_{s = k - m}^{k - 1} \binom{s}{r-1}.
\]
Since \(\binom{s}{r-1}\) is increasing in \(s\) for fixed \(r\), we have \(\binom{s}{r-1} \le \binom{k}{r-1}\) for all \(s \le k\). Therefore,
\[
\sum_{s = k - m}^{k - 1} \binom{s}{r-1} \le m \binom{k}{r-1} = m \cdot \frac{r}{k} \binom{k}{r}.
\]
Combining these inequalities gives the desired bound.
\end{proof}

The point of this proposition is that, for a separator $S$, when the reach \(\ell(S)\) is large, the contribution of free difference of $S$ to the overall missing‑edge count becomes negligible.

\section{Upper bounds}\label{bound}
In this section, we derive an upper bound for the edge number of an arbitrary abstract tree.
\begin{theorem}\label{tight}
Let $\widehat{T}$ be the associated abstract tree of a separator tree $T$ with $t+k$ vertices and $r \ge 3$
\begin{enumerate}
    \item If $t \ge \frac{2^{r}}{2^{r}-1}ck$ and $k \ge r $, then
    \begin{equation}\label{eq:general_bound}
     e(T)\leq \binom{t+k}{r} - \binom{t}{r} 
            + \frac{t}{ck}\binom{ck}{r}
            + \Bigl(\frac{3t}{2ck}-1\Bigr)\binom{k}{r}
            - \frac{t}{2ck} \sum_{i = 0}^{\infty} \binom{k / 2^{\,i}}{r}.    
    \end{equation}
    \item For $t\geq k+\frac{4}{3}ck+\frac{ck^2}{r-1}$, if $c\geq (2r)^{1/(r-1)}$ and $k \ge 2(r-1)$, then
    \begin{equation}\label{eq:special_bound}
    e(T)\leq \binom{t+k}{r} - \binom{t}{r} 
            + \frac{t}{ck}\binom{ck}{r}
            + \Bigl(\frac{t}{ck}-1\Bigr)\binom{k}{r}
            - \frac{t}{2ck} \sum_{i = 0}^{\infty} \binom{k / 2^{\,i}}{r}.    
    \end{equation}
\end{enumerate}
\end{theorem}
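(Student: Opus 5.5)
We combine the deletion inequality \eqref{eq:delete_difference} with the normal-tree bound of Theorem~\ref{normal2} applied to $\widehat{T'}$. Write $t'=t-\sum_i a_i$ and $L=L(\widehat{T'})$. Theorem~\ref{normal2} gives $e(\widehat{T'})$ as the right-hand side of \eqref{eq:normal_abstract_tree} with $t$ replaced by $t'$, plus $f(k,r,L)-X(\widehat{T'})$. The target bounds have $t$ in place of $t'$. The plan is therefore to show three things: the essential difference $\mathcal{E}(\widehat{T})$ pays for replacing $\binom{t'+k}{r}-\binom{t'}{r}$ by $\binom{t+k}{r}-\binom{t}{r}$; the linear-in-$t$ terms increase by $\frac{t-t'}{ck}[\cdots]$; and the sum of free differences $\sum_{S\in\mathcal{S}'}D_{f(S)}$, together with $f(k,r,L)-X(\widehat{T'})$, is absorbed by the extra slack. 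That slack is $\frac{t}{2ck}\binom{k}{r}$ in part~1, and comes from the growth in $t$ in part~2. The first step is to compare the binomial part of $\mathcal{E}(\widehat T)$ with the telescoping difference. Since $\binom{x+k}{r}-\binom{x}{r}$ is increasing in $x$, the quantity $\mathcal{E}(\widehat T)$ is exactly the cost of moving from $t'$ to $t$ plus $\sum_i\binom{a_i}{r}$, where $a_i<k/3$. So the real work is to bound everything else by a multiple of $\sum_i a_i$ and of the number of normal atoms.

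Second, we bound the free differences. For $\ell(S)\ge2$, Proposition~\ref{free2} gives $D_{f(S)}\le \frac{m(S)r}{2^{r-1}k}\binom{k}{r}$. For $\ell(S)=1$ we estimate directly from Lemma~\ref{technical data}: $D_{f(S)}\le \binom{k}{r}-\binom{k-m(S)}{r}-\sum_A\binom{m(A,S)}{r}$. Summing over $S$ and using $\sum_S m(A,S)\le a_A$ (property 1 of the technical data), the total is controlled by $\sum_i a_i$ times $\frac{r}{k}\binom{k}{r}$, up to a $2^{-(r-1)}$ factor except at reach-$1$ separators. At reach $1$ the free difference can be as large as about $\binom{k}{r}$ per separator. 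We charge it against the big branch's normal atoms: a reach-$1$ separator has at least one normal atom on its small side, and the number of such separators is at most $L$. This charging is the source of the extra $\frac{t}{2ck}\binom{k}{r}$ in \eqref{eq:general_bound}, since $L\le t/(k/3)$, and more precisely one normal atom is worth about $ck$ of $t$. Here the hypothesis $t\ge\frac{2^r}{2^r-1}ck$ is used to absorb the lower-order terms $f(k,r,L)$ and $-\binom{k}{r}$ in the constant.

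Third, for part~2 we must remove the extra $\frac{t}{2ck}\binom{k}{r}$ entirely. The idea is that when $c\ge(2r)^{1/(r-1)}$, each unit of $t$ contributed by normal atoms yields $\frac1{ck}\binom{ck}{r}\approx c^{r-1}\binom{k}{r}/k\cdot\frac{1}{r!}$-type gain. The loss from a tiny atom of size $a+k$ is at most about $a\cdot\frac{r}{k}\binom{k}{r}$, coming from $\binom{t+k}{r}$-type terms and the $D_f$ terms, while the gain is $\frac{a}{ck}\binom{ck}{r}\ge a\,c^{r-1}\frac1k\binom{k}{r}$. Thus $c^{r-1}\ge 2r$ makes the gain beat the loss. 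The lower bound $t\ge k+\frac43ck+\frac{ck^2}{r-1}$ is used to absorb the additive constants $f(k,r,L)$ and the single-atom boundary effects. We expect the main obstacle to be the reach-$1$ free differences in part~1. The charging must be sharp enough to lose only $\frac{t}{2ck}\binom{k}{r}$, which requires pairing each reach-$1$ separator with the $X(S)$ branch-error term (positive by Theorem~\ref{normal2}) and with the $\sum_A\binom{m(A,S)}{r}$ savings. Our first attempt will be a case split on whether $m(S)\le k/2$.
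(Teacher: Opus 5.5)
\textbf{Verdict: genuine gap in part~1.} Your skeleton matches the paper's. It uses the deletion inequality \eqref{eq:delete_difference}, the normal-tree formula for $\widehat{T'}$, Proposition~\ref{free2} at reach $\ge 2$, and Lemma~\ref{technical data} at reach $1$. For part~2 it compares, per tiny vertex, a loss of at most $\frac{2r}{k}\binom{k}{r}$ against the gain $\frac{1}{ck}\binom{ck}{r}\ge\frac{c^{r-1}}{k}\binom{k}{r}$; the loss bound uses $\binom{k}{r}-\binom{k-m}{r}\le m\binom{k}{r-1}$ and $k\ge 2(r-1)$.

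The gap is at exactly the step that produces the coefficient $\frac{3t}{2ck}$, and your proposed charging does not deliver it:
\begin{itemize}
\item The bound $L\le 3t/k$ only gives an extra term of order $\frac{3t}{k}\binom{k}{r}$.
\item A normal atom is not ``worth about $ck$ of $t$'' in general: it may have only $k/3$ vertices outside its parent separator.
\item Even granting $ck$ per reach-$1$ separator, charging its cost to one branch gives $\frac{t}{ck}\binom{k}{r}$, twice the available slack.
\end{itemize}
This factor is not negligible. For $c=1$ and large $r$, the cost $\binom{k}{r}-\binom{k-m(S)}{r}$ of a reach-$1$ separator can exceed the tiny-vertex gain $\frac{m(S)}{ck}\binom{ck}{r}$ by nearly $\binom{k}{r}$. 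Example~\ref{ex2} shows the part-1 bound is essentially sharp there, so there is no room to lose a factor~$2$. Knowing only $X(S)\ge 0$ does not help.

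\textbf{What the paper uses instead.} Only reach-$1$ separators with $m(S)>0$ cost anything, since otherwise $D_{f(S)}\le 0$.
\begin{itemize}
\item For such $S$, the small branch holds exactly one normal atom plus a tiny atom, hence two sibling atoms. The no-merge convention ($|A|+|B|\ge (c+2)k$ for siblings) then forces at least $ck+k$ vertices in that branch.
\item If $S$ is balanced, orientation rule~2 puts a tiny atom in the big branch too, so the big branch also has at least $ck+k$ vertices.
\item These branches are pairwise disjoint, which gives $2ck|R|+ck|P|\le t$ (Claim~\ref{claim1}).
\item If $S$ is unbalanced, then $\ell(S,+)\ge 2$, and Claim~\ref{convex} yields $X(S)\ge \frac12\binom{k}{r}-\frac32\binom{k/2}{r}$ (Claim~\ref{claim2}), halving its cost.
\end{itemize}
Together these give $\bigl(|R|+\frac12|P|\bigr)\binom{k}{r}\le \frac{t}{2ck}\binom{k}{r}$.

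\textbf{Absorbing $f(k,r,L)$ in part~1.} Once that slack is spent, $f(k,r,L)$ can no longer be absorbed via $t\ge\frac{2^r}{2^r-1}ck$. That route works only when no reach-$1$ separator carries tiny vertices. Otherwise the paper uses $L\ge 2$ to get $f(k,r,L)\le f(k,r,2)=O\bigl(\binom{k/2}{r}\bigr)$. It then absorbs this term, together with the residual $\frac32|P|\binom{k/2}{r}$, against the saving $(|R|+|P|)\binom{k(1-x)}{r}$ or the tiny-vertex gain, splitting on $x=|V_1|/(k|\mathcal{S}_1|)$.

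\textbf{Smaller gap in part~2.} You need a dichotomy on $L$ to dispose of $f(k,r,L)$.
\begin{itemize}
\item If $L>k/(r-1)$, then $f(k,r,L)=0$.
\item If $L\le k/(r-1)$, then $t'\le k+ckL\le k+\frac{ck^2}{r-1}$, so the hypothesis on $t$ forces $t-t'\ge\frac43ck$. The remaining gain $\frac{3}{7c}\cdot\frac{t-t'}{k}\binom{k}{r}\ge\frac47\binom{k}{r}$ then covers $f(k,r,L)\le f(k,r,1)\le\frac47\binom{k}{r}$.
\end{itemize}
Also, the $\binom{t+k}{r}$-type terms cancel exactly and are not a source of loss. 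The loss is $\sum_i\binom{a_i}{r}$ plus the $D_{f(S)}$.
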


\begin{proof}
If \(L(T)=0\), then \(T\) consists of a tiny atom, hence \(t\leq 1+\frac{k}{3} < \frac{2^r}{2^r-1}ck\), violates the condition of Theorem~\ref{tight}.
Therefore, we assume that \(L(T)\geq 1\).

Let \(\mathcal{A}, \mathcal{S}, \mathcal{T}\) denote the sets of atoms, separators and tiny atoms of \(T\), respectively.
Performing the deletion operation on \(\widehat{T}\) yields a normal abstract tree \(\widehat{T'}\).
Suppose that \(\widehat{T'}\) has \(t' + k\) vertices; then the number of deleted vertices is \(t - t'\).
Let \(\mathcal{A}'\) and \(\mathcal{S}'\) denote the sets of atoms and separators of \(\widehat{T'}\), respectively.
By \eqref{eq:normal_abstract_def} we have
\[
e(\widehat{T'}) = \binom{t' + k}{r} - \binom{t'}{r} + \frac{t'}{ck}\binom{ck}{r} + \left( \frac{t'}{ck} - 1 \right) \binom{k}{r} - \frac{t'}{2ck} \sum_{i=0}^{\infty} \binom{k / 2^{i}}{r} + f(k, r, L) - X(\widehat{T'}).
\]

Where $L=L(T)=L(T')$, by Definition~\ref{essential}, we have
\[
e(T) \le e(\widehat{T'}) + \mathcal{E}(\widehat{T}) + \sum_{S \in \mathcal{S}'} D_{f(S)},
\]
where
\[
\mathcal{E}(\widehat{T}) = \binom{t + k}{r} - \binom{t}{r} - \binom{t' + k}{r} + \binom{t'}{r} + \sum_{A \in \mathcal{T}} \binom{|A| - k}{r}.
\]
Combining these yields
\[
\begin{aligned}
e(T)&\leq \binom{t+k}{r}-\binom{t}{r} + \frac{t'}{ck}\binom{ck}{r} + \left( \frac{t'}{ck} - 1 \right) \binom{k}{r}
- \frac{t'}{2ck} \sum_{i=0}^{\infty} \binom{k/2^i}{r}\\
& \qquad+f(k, r, L)-X(\widehat{T'}) + \sum_{A\in\mathcal{T}}\binom{|A|-k}{r} + \sum_{S\in\mathcal{S}'} D_{f(S)}.
\end{aligned}
\]

Note that \(X(T) = \sum_{S \in \mathcal{S}} X(S)\).
Compared with \eqref{eq:general_bound} and \eqref{eq:special_bound}, and note that \(\sum_{i=0}^\infty\binom{k/2^i}{r}\leq \sum_{i=0}^\infty\frac{1}{2^{ir}}\binom{k}{r}= \frac{2^r}{2^r-1}\binom{k}{r} \), it suffices to prove
\begin{enumerate}
\item For \(t \ge \frac{2^{r}}{2^{r} - 1} ck\) and \(k \ge r\),
\begin{equation}\label{eq:general_bound2}
f(k, r, L) + \sum_{A \in \mathcal{T}} \binom{|A| - k}{r} + \sum_{S \in \mathcal{S}'} \bigl( D_{f(S)} - X(S) \bigr)
\le \frac{t - t'}{ck} \binom{ck}{r}  + \frac{t}{2ck} \binom{k}{r} + \frac{t - t'}{ck}\cdot\left(1- \frac{2^{r-1}}{2^r-1}\right)\binom{k}{r}
\end{equation}
\item For \(c \ge (2r)^{1/(r-1)}\), \(k \ge 2(r-1)\) and \(t\geq k+\frac{4}{3}ck+\frac{ck^2}{r-1}\),
\begin{equation}\label{eq:special_bound2}
f(k, r, L) + \sum_{A \in \mathcal{T}} \binom{|A| - k}{r} + \sum_{S \in \mathcal{S}'} \bigl( D_{f(S)} - X(S) \bigr)
\le \frac{t - t'}{ck}  \binom{ck}{r} + \frac{t - t'}{ck}\cdot\left(1- \frac{2^{r-1}}{2^r-1}\right)\binom{k}{r}
\end{equation}
\end{enumerate}

The main part of this proof is estimating \(f(k,r,L)+\sum_{A \in \mathcal{T}} \binom{|A| - k}{r} + \sum_{S \in \mathcal{S}'} \bigl( D_{f(S)} - X(S) \bigr)\).

Let \(V_{1}\) be the set of tiny vertices in \(T\) that are assigned to some separator \(S\) with \(\ell(S) = 1\), and let \(V_{2}\) be the set of all other tiny vertices.
Let \(\mathcal{S}_1 = \{S \in \mathcal{S} : \ell(S) = 1,\ \exists\text{ tiny vertex assigned to } S\}\) and \(\mathcal{S}_2 = \{S \in \mathcal{S} : \ell(S) \ge 2,\ \exists\text{ tiny vertex assigned to } S\}\).
Then clearly,
\[
|V_1| = \sum_{S\in\mathcal{S}_1} m(S),\quad |V_1|+|V_2|=t-t' \quad\text{and}\quad |V_2|\ge\sum_{S\in\mathcal{S}_2} m(S).
\]
(Note that there may exist tiny vertices not assigned to any separator, and there may exist separator $S$ with no tiny vertex assigned to $S$; such $S$ satisfy $D_f(S) \le 0$.)

\medskip
\noindent \textbf{Free edges in \(S \in \mathcal{S}_2\).} First, we focus on the contribution from tiny vertices assigned to separators of reach at least $2$.
By Proposition~\ref{free2},
\begin{equation}\label{eq:et2_1}
\sum_{S \in \mathcal{S}_2} \bigl( D_{f(S)} - X(S) \bigr)
\le \sum_{S \in \mathcal{S}_2} D_{f(S)}
\le \sum_{S \in \mathcal{S}_2} \frac{m(S) r}{2^{r-1} k} \binom{k}{r}
\le \frac{|V_2| r}{2^{r-1} k} \binom{k}{r}.    
\end{equation}

For each \(A \in \mathcal{T}\), let \(A' = A \setminus V_2\) and let \(\mathcal{T}' = \{A' : A \in \mathcal{T}\}\). Then
\begin{equation}\label{eq:et2_2}
\sum_{A \in \mathcal{T}} \binom{|A| - k}{r} - \sum_{A' \in \mathcal{T}'} \binom{|A'| - k}{r}
\le \sum_{A \in \mathcal{T}} |A \cap V_2| \binom{k/3-1}{r-1}
\le \frac{|V_2| r}{3^{r-1} k} \binom{k}{r}.
\end{equation}

Note that for $S \in \mathcal{S}' \setminus (\mathcal{S}_1 \cup \mathcal{S}_2)$, $D_{f(S)} \le 0$. 
From \eqref{eq:et2_1} and \eqref{eq:et2_2} we obtain
\begin{equation}\label{eq:et2}
\sum_{A \in \mathcal{T}} \binom{|A| - k}{r} + \sum_{S \in \mathcal{S}'} \bigl( D_{f(S)} - X(S) \bigr)
\le \sum_{A' \in \mathcal{T}'} \binom{|A'| - k}{r} + \sum_{S \in \mathcal{S}_1} \bigl( D_{f(S)} - X(S) \bigr)
+ \left( \frac{1}{2^{r-1}} + \frac{1}{3^{r-1}} \right) \frac{|V_2| r}{k} \binom{k}{r}.
\end{equation}

\medskip
\noindent \textbf{Free edges in \(S \in \mathcal{S}_1\).}
Suppose \(A\) is a tiny atom. Then there is at most one separator \(S \in \mathcal{S}_1\) such that \(A\) lies in the small branch of \(S\).
Hence the tiny vertices in \(A' = A \setminus V_2\) are exactly the set of vertices in \(A\) that are assigned to \(S\).
For any \(S \in \mathcal{S}_1\), let \(\mathcal{A}_S\) be the set of tiny atoms in the small branch of \(S\). Then by Lemma~\ref{technical data},
\[
D_{f(S)} = f(S) \le \binom{k}{r} - \sum_{A \in \mathcal{A}_S} \binom{m(A,S)}{r} - \binom{k - m(S)}{r}.
\]
Note that 
\[
\sum_{S \in \mathcal{S}_1}\sum_{A \in \mathcal{A}_S}\binom{m(A,S)}{r}=\sum_{A' \in \mathcal{T}'} \binom{|A'| - k}{r}.
\]
This yields
\[
\sum_{A' \in \mathcal{T}'} \binom{|A'| - k}{r} + \sum_{S \in \mathcal{S}_1} \bigl( D_{f(S)} - X(S) \bigr)
\le \sum_{S \in \mathcal{S}_1} \left( \binom{k}{r} - \binom{k - m(S)}{r} -X(S)\right).
\]
If \(\mathcal{S}_1\) is nonempty, then let \(x = \frac{|V_1|}{k\cdot|\mathcal{S}_1|}\), by convexity of \(h(m)=\binom{k-m}{r}\), it holds that
\begin{equation}\label{eq:et1_1}
\sum_{A' \in \mathcal{T}'} \binom{|A'| - k}{r} + \sum_{S \in \mathcal{S}_1} \bigl( D_{f(S)} - X(S) \bigr)
\le |\mathcal{S}_1| \binom{k}{r} - |\mathcal{S}_1|\binom{k(1-x)}{r} -\sum_{S\in\mathcal{S}_1}X(S).
\end{equation}

Let \(R\) be the set of balanced separators \(S\) with \(\ell(S) = 1\) and let \(P\) be the set of unbalanced separators \(S\) with \(\ell(S) = 1\). 
Clearly \(|\mathcal{S}_1|=|R|+|P|\).
Moreover, \(R\) and \(P\) satisfy the following constraint.

\begin{claim}\label{claim2}
If \(P\) is not empty, then for any \(S\in P\)
\[
X(S)\geq \frac{1}{2}\binom{k}{r}-\frac{3}{2}\binom{k/2}{r}.
\]
\end{claim}

We prove Claim~\ref{claim2} in the end of Section~\ref{bound}.
Assume Claim~\ref{claim2} holds, the right hand side of \eqref{eq:et1_1} reduces to
\begin{equation*}
\Bigl(|R|+\frac{1}{2}|P|\Bigr)\binom{k}{r} + \frac{3}{2}|P|\binom{k/2}{r} - \bigl(|R|+|P|\bigr)\binom{k(1-x)}{r}.
\end{equation*}
Combining this with \eqref{eq:et2} yields
\begin{equation}\label{eq:et1}
\begin{aligned}
&\ f(k, r, L) + \sum_{A \in \mathcal{T}} \binom{|A| - k}{r} + \sum_{S \in \mathcal{S}'} \bigl( D_{f(S)} - X(S) \bigr)\\
\leq&\ f(k, r, L)+\left( \frac{1}{2^{r-1}} + \frac{1}{3^{r-1}} \right) \frac{|V_2| r}{k} \binom{k}{r} + \Bigl(|R|+\frac{1}{2}|P|\Bigr)\binom{k}{r} + \frac{3}{2}|P|\binom{k/2}{r} - \bigl(|R|+|P|\bigr)\binom{k(1-x)}{r}.
\end{aligned}
\end{equation}

\bigskip
\noindent\textbf{Proof of \eqref{eq:general_bound2}.}
We consider three cases: (a) \(R,P = \emptyset\); (b) \(P\neq\emptyset\) or \(R\neq \emptyset\).

\medskip
\noindent\textbf{Case (a).} Suppose \(R = P = \emptyset\). Then \(|V_1| = 0\), \(|V_2| = t - t'\), and the right-hand side of \eqref{eq:et1} reduces to
\(f(k,r,L) + \bigl( \frac{1}{2^{r-1}} + \frac{1}{3^{r-1}} \bigr) \frac{|V_2| r}{k} \binom{k}{r}\).
Note that \(f(k,r,\ell)\) is decreasing in \(\ell\) (Claim~\ref{convex}, since \(f(k,r,\ell)=g(\ell/k)=\sum_{i=0}^\infty g_i(\ell/k)\) and each \(g_i(z)\) is decreasing in \(z\)). Hence
\[
f(k,r,L) \le f(k,r,1) = \frac12\sum_{i=0}^\infty \binom{k/2^i}{r}
\le \frac{2^{r-1}}{2^r-1}\binom{k}{r}
\le \frac{t}{2ck}\binom{k}{r},
\]
where the last inequality uses \(t \ge \frac{2^r}{2^r-1}ck\).
To prove \eqref{eq:general_bound2}, it suffices to show
\begin{equation}\label{eq:proof_of_v2}
\Bigl( \frac{1}{2^{r-1}} + \frac{1}{3^{r-1}} \Bigr) \frac{|V_2| r}{k}\binom{k}{r}
\le \frac{|V_2|}{ck}\binom{ck}{r} + \frac{|V_2|}{ck}\Bigl(1-\frac{2^{r-1}}{2^r-1}\Bigr)\binom{k}{r}.
\end{equation}
This reduces to
\[
c^r - \Bigl(\frac{1}{2^{r-1}}+\frac{1}{3^{r-1}}\Bigr) c r + 1 - \frac{2^{r-1}}{2^r-1} \ge 0
\]
for \(r\ge 3\) and \(c\ge 1\). This inequality holds because
\[
c^r - \Bigl(\frac{1}{2^{r-1}}+\frac{1}{3^{r-1}}\Bigr) c r + 1 - \frac{2^{r-1}}{2^r-1}
\ge c^r - \frac{13}{12}c + \frac37 \ge 0.
\]

\medskip
\noindent\textbf{Case (b).} Suppose \(P\neq\emptyset\) or \(R\neq\emptyset\). Using \eqref{eq:et1}, \eqref{eq:proof_of_v2} and \(t-t' = |V_1|+|V_2|\), it suffices to prove
\begin{equation}\label{eq:case_b1}
\begin{aligned}
& f(k,r,L) + \bigl(|R|+\tfrac12|P|\bigr)\binom{k}{r} + \tfrac32|P|\binom{k/2}{r} - (|R|+|P|)\binom{k(1-x)}{r} \\
\le &\ \frac{|V_1|}{ck}\binom{ck}{r} + \frac{t}{2ck}\binom{k}{r} + \frac{|V_1|}{ck}\Bigl(1-\frac{2^{r-1}}{2^r-1}\Bigr)\binom{k}{r}.
\end{aligned}
\end{equation}

\begin{claim}\label{claim1}
\(2ck|R| + ck|P| \le t\). 
\end{claim}

We prove Claim~\ref{claim1} at the end of Section~\ref{bound}.

Assume Claim~\ref{claim1} holds, \(|R|+\frac12|P| \le \frac{t}{2ck}\). Since there exists a separator \(S\) with \(\ell(S)=1\), we have \(L \ge 2\ell(S) \ge 2\). Consequently,
\[
f(k,r,L) \le f(k,r,2) \le \sum_{i=0}^\infty \binom{k/2^{i+1}}{r}
\le \frac{2^r}{2^r-1}\binom{k/2}{r} \le \frac87\binom{k/2}{r},
\]
and thus
\[
\frac32|P|\binom{k/2}{r} + f(k,r,L)
\le \frac32|P|\binom{k/2}{r} + \frac87\binom{k/2}{r}
\le \frac{27}{8}(|R|+|P|)\binom{k/2}{r}
\le (|R|+|P|)\binom{2k/3}{r}.
\]
To establish \eqref{eq:case_b1}, it remains to verify
\begin{equation}\label{eq:case_b2}
\begin{aligned}
& (|R|+|P|)\binom{k(1-x)}{r}
+ \frac{|V_1|}{ck}\binom{ck}{r} + \frac{|V_1|}{ck}\Bigl(1-\frac{2^{r-1}}{2^r-1}\Bigr)\binom{k}{r} 
 - (|R|+|P|)\binom{2k/3}{r} \ge 0.
\end{aligned}
\end{equation}
Here \(x = \dfrac{|V_1|}{k|\mathcal{S}_1|} = \dfrac{|V_1|}{k(|R|+|P|)}\).
If \(x \le \frac13\), then \(\binom{k(1-x)}{r} \ge \binom{2k/3}{r}\) and \eqref{eq:case_b2} holds.
If \(x > \frac13\), then \(|V_1| \ge \frac13 k(|R|+|P|)\), and it suffices to show
\[
\frac{1}{3c}\binom{ck}{r} + \frac{1}{3c}\Bigl(1-\frac{2^{r-1}}{2^r-1}\Bigr)\binom{k}{r} - \binom{2k/3}{r} \ge 0.
\]
This follows from
\[
c^r + 1 - \frac{2^{r-1}}{2^r-1} - \frac{2^r}{3^{r-1}}c \ge c^r - \frac89c + \frac37 \ge 0.
\]
This completes Case (b) and the proof of \eqref{eq:general_bound2}.

\bigskip
\noindent\textbf{Proof of \eqref{eq:special_bound2}.}
Since \(x = \frac{|V_1|}{k|\mathcal{S}_1|}\) and \(k \ge 2(r-1)\),
\[
|\mathcal{S}_1|\binom{k}{r} - |\mathcal{S}_1|\binom{k(1-x)}{r}
\le kx|\mathcal{S}_1|\binom{k}{r-1}
\le |V_1|\frac{r}{k-r+1}\binom{k}{r}
\le |V_1|\frac{2r}{k}\binom{k}{r}.
\]
Combining with \eqref{eq:et2} and \eqref{eq:et1_1}, we obtain
\[
\sum_{A \in \mathcal{T}} \binom{|A| - k}{r} + \sum_{S \in \mathcal{S}'} \bigl( D_{f(S)} - X(S) \bigr)
\le |V_1|\frac{2r}{k}\binom{k}{r} + \left(\frac{1}{2^{r-1}}+\frac{1}{3^{r-1}}\right)\frac{|V_2|r}{k}\binom{k}{r}
\le \frac{2r(t-t')}{k}\binom{k}{r}.
\]

To establish \eqref{eq:special_bound2}, it suffices to show
\[
f(k,r,L) \le \frac{t-t'}{ck}\binom{ck}{r} - \frac{2r(t-t')}{k}\binom{k}{r}
+ \frac{t-t'}{ck}\left(1 - \frac{2^{r-1}}{2^r-1}\right)\binom{k}{r}.
\]
Using \(c \ge (2r)^{1/(r-1)}\) and \(r \ge 3\), we have
\(\frac{1}{c}\binom{ck}{r} \ge c^{r-1}\binom{k}{r} \ge 2r\binom{k}{r}\) and \(1 - \frac{2^{r-1}}{2^r-1} \ge \frac{3}{7}\).
Hence it suffices to show
\begin{equation}\label{eq:special1}
f(k,r,L) \le \frac{3}{7c}\cdot\frac{t-t'}{k}\binom{k}{r}.
\end{equation}

If \(L \le \frac{k}{r-1}\), then \(\widehat{T'}\) contains \(L\) normal atoms, so
\(t' = |V(\widehat{T})| \le k + ck \cdot L \le k + \frac{ck^2}{r-1}\).
In this case, \(f(k,r,L) \le f(k,r,1) \le \frac{2^{r-1}}{2^r-1}\binom{k}{r} \le \frac{4}{7}\binom{k}{r}\).
Since \(t \ge k + \frac{4}{3}ck + \frac{ck^2}{r-1}\), we have \(\frac{3}{7c}\cdot\frac{t-t'}{k} \ge \frac{4}{7}\), which implies \eqref{eq:special1} and hence \eqref{eq:special_bound2} holds.

If \(L > \frac{k}{r-1}\), then
\[
f(k,r,L) = \frac{L}{2}\sum_{i=0}^\infty \binom{k/(2^i L)}{r} = 0,
\]
which directly gives \eqref{eq:special1} and completes the proof.

\end{proof}

\begin{proof}[Proof of Claim~\ref{claim2}]
Recall that
\[
X(S) = f(k,r,\ell(S)+\ell(S,+))-f(k,r,\ell(S))-f(k,r,\ell(S,+))
      +\ell(S)\,\binom{\frac{k}{\ell(S)}}{r}.
\]
For $S \in P$, we have $\ell(S)=1$ and $\ell(S,+)\ge 2$. By Claim~\ref{convex}, $f(k,r,\ell)=g(\ell/k)$, and $\tilde{g}(y)=g(y)-g(1/k+y)$ is decreasing on $\mathbb{R}^+$. Hence
\[
X(S)=\frac{1}{2}\binom{k}{r}-\frac{1}{2}\sum_{i=1}^{\infty}\binom{k/2^i}{r}
      -g(\ell(S,+)/k)+g((\ell(S,+)+1)/k)
      \ge \frac{1}{2}\binom{k}{r}-\frac{1}{2}\sum_{i=1}^{\infty}\binom{k/2^i}{r}-g(2/k)+g(3/k).
\]
Now
\[
-g(2/k)+g(3/k) 
= -\sum_{i=1}^{\infty}\binom{k/2^{i}}{r} + \frac{3}{2}\sum_{i=0}^{\infty}\binom{k/(3\cdot 2^{i})}{r}.
\]
Therefore,
\[
X(S) \ge \frac{1}{2}\binom{k}{r}-\frac{3}{2}\sum_{i=1}^{\infty}\binom{k/2^{i}}{r} + \frac{3}{2}\sum_{i=0}^{\infty}\binom{k/(3\cdot 2^{i})}{r}
\ge \frac{1}{2}\binom{k}{r} - \frac{3}{2}\binom{k/2}{r}.
\]
This completes the proof of the claim.
\end{proof}

\begin{proof}[Proof of Claim~\ref{claim1}]
We aim to prove
\[
2ck|R| + ck|P| \le t.
\]

For each \(S \in R\), let \(A_S\) and \(B_S\) be its small branch and big branch, respectively. For each \(S' \in P\), let \(P_{S'}\) be its small branch. 
Each of \(A_S\), \(B_S\) and \(P_{S'}\) contains exactly one normal atom, and there is no separator of reach \(1\) inside any of these branches. Consequently, the branches \(A_S\), \(B_S\) (for \(S \in R\)) and \(P_{S'}\) (for \(S' \in P\)) are pairwise disjoint. Therefore,
\[
\sum_{S \in R} \bigl( (|A_S| - k) + (|B_S| - k) \bigr) + \sum_{S' \in P} (|P_{S'}| - k) \le t.
\]

It remains to show that \(|A_S|, |B_S|, |P_{S'}| \ge ck + k\) for all \(S \in R\) and \(S' \in P\).

Consider some \(S \in R \cup P\). Since there exists a tiny vertex assigned to \(S\), its small branch must contain a normal atom and a tiny atom. Hence there are at least two atoms \(A_1, A_2\) in its small branch directly below a separator \(S_0\). If \(|A_1 \setminus S_0| + |A_2 \setminus S_0| \le ck\), then we could merge them into a single atom, contradicting our assumption. Thus \(|A_1 \setminus S_0| + |A_2 \setminus S_0| \ge ck\), which implies that the small branch of \(S\) has size at least \(ck + k\).

If \(S \in R\), then because its small branch contains tiny vertices and \(S\) is balanced, its big branch also contains a tiny atom. By the same argument, the big branch of \(S\) has size at least \(ck + k\). This completes the proof.
\end{proof}

\section{Proofs for main theorems}\label{proof}

In this section we prove Theorem~\ref{thm:limit}, as well as the upper bounds for Theorems~\ref{main} and~\ref{0}.

We first prove Theorem~\ref{thm:limit}.
The following lemma will be used.

\begin{lemma}\label{lem:limit_exist}
Suppose that \(\{a_n : n \in \mathbb{N}^*\}\) satisfies the following conditions:
\begin{itemize}
\item [(1)] there exists \(C > 0\) such that \(0 \le a_n \le C\) for all \(n \in \mathbb{N}^*\);
\item [(2)] \(\{n a_n\}\) is increasing.
\item [(3)] \(a_{mn} \ge a_n\) for all \(m, n \in \mathbb{N}^*\);
\end{itemize}
Then \(\lim_{n \to \infty} a_n\) exists.
\end{lemma}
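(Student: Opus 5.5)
Since $\{a_n\}$ is bounded, $L:=\limsup_{n\to\infty} a_n$ is finite. It therefore suffices to show $\liminf_{n\to\infty} a_n \ge L$. Fix $\varepsilon>0$ and choose $n_0$ with $a_{n_0} \ge L-\varepsilon$. The plan is to show that every sufficiently large $N$ satisfies $a_N \ge L-2\varepsilon$, by comparing $N$ with the nearest multiple of $n_0$ below it.

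Write $N = m n_0 + s$ with $m\ge 1$ and $0\le s< n_0$. Condition (3) applied to $m$ and $n_0$ gives $a_{mn_0} \ge a_{n_0} \ge L-\varepsilon$. Condition (2) gives $N a_N \ge (mn_0) a_{mn_0}$, because $N \ge mn_0$ and $\{na_n\}$ is increasing. Combining the two,
\[
a_N \;\ge\; \frac{mn_0}{N}\,a_{mn_0} \;\ge\; \Bigl(1-\frac{n_0}{N}\Bigr)(L-\varepsilon),
\]
where we used $mn_0 > N-n_0$ and $L-\varepsilon$ may be assumed nonnegative (otherwise the bound is trivial because $a_N\ge 0$). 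Condition (1) gives $L\le C$, so the right-hand side is at least $L-\varepsilon - \frac{n_0 C}{N}$. This exceeds $L-2\varepsilon$ as soon as $N \ge n_0 C/\varepsilon$.

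Hence $\liminf_{n\to\infty} a_n \ge L-2\varepsilon$ for every $\varepsilon>0$, so $\liminf = \limsup$ and the limit exists. The argument is elementary. The one point needing care is the order in which monotonicity of $na_n$ and the multiplicative property (3) are applied: the reduction must go from $N$ down to the multiple $mn_0$, and then from $mn_0$ down to $n_0$. I do not expect a real obstacle in proving the lemma itself. The harder work is in its later application, namely verifying that a suitable normalization of $g_{c,r}(n,k)$ satisfies (2) and (3), for instance via disjoint unions or gluing of extremal hypergraphs.
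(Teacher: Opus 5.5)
Your proposal is correct and follows essentially the same argument as the paper. Both use (2) to go from $N$ down to the largest multiple of $n_0$ below it, and then (3) to go down to $n_0$. The only difference is the error bookkeeping: the paper takes $n_0$ large with $1/n_0\le \varepsilon/(2C)$ and restricts to $N\ge n_0^2$, whereas you keep $n_0$ fixed and absorb the loss $n_0C/N$ by letting $N\to\infty$.
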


\begin{proof}
First, \(A = \limsup_{n \to \infty} a_n\) exists because \(\{a_n\}\) has a uniform upper bound.
For any \(\varepsilon > 0\), there exists \(n \in \mathbb{N}\) such that \(a_n \ge A - \dfrac{\varepsilon}{2}\) and \(\dfrac{1}{n} \le \dfrac{\varepsilon}{2C}\).

Now consider any \(m \ge n^2\). Write \(m = qn + r\) with \(0 \le r \le n-1\). Then \(q \ge n\). Since \(\{n a_n\}\) is increasing, we have \(m a_m \ge qn a_{qn}\), and consequently
\[
a_m \ge \frac{qn}{m} \cdot a_{qn} \ge \left(1 - \frac{r}{qn + r}\right) a_n.
\]
Using \(r \le n-1\) and \(q \ge n\), we obtain \(1 - \frac{r}{qn + r} \ge 1 - \frac{1}{n}\). Hence
\[
a_m \ge \left(1 - \frac{1}{n}\right) a_n \ge a_n - \frac{a_n}{n} \ge a_n - \frac{C}{n} \ge A - \frac{\varepsilon}{2} - \frac{\varepsilon}{2} = A - \varepsilon.
\]
Thus for all sufficiently large \(m\), we have \(a_m \ge A - \varepsilon\), and since \(A = \limsup a_n\) implies \(a_m \le A + \varepsilon\) for large \(m\), we conclude \(\lim_{n \to \infty} a_n = A\).
\end{proof}

Now we are ready to present the proof of Theorem~\ref{thm:limit}.

\begin{proof} [\bf Proof of Theorem~\ref{thm:limit}]
For any fixed \(r\geq 2\), \(c\) and \(k\), our aim is to show that the sequence \(\{a_n := g_{c,r}(n + k, k)\}\) satisfies the three conditions of Lemma~\ref{lem:limit_exist}. If this holds, then the limit \(g_{c,r}^k := \lim_{n \to \infty} g_{c,r}(n, k)\) exists and equals \(\lim_{n \to \infty} a_n\).

First, we verify condition (1), i.e., there exists \(C > 0\) such that \(a_n \le C\). Recall that 
\[
f_{c,r}(n,k) = \max_{H \in \mathcal{F}_{c,r}(n,k)} e(H) - \binom{n}{r} + \binom{n - k}{r}, \qquad 
g_{c,r}(n,k) = \frac{r! \cdot f_{c,r}(n,k)}{k^{r-1}(n - k)}.
\]

Note that Lemma~\ref{lem:edgenum_septree} and the separator tree framework of Section~\ref{sep-tree} are valid for all \(r\ge 2\) (the bonded edge term \(\bar{\beta}(S)\) vanishes when \(r=2\), but the inequalities remain valid). 
For any \(H \in \mathcal{F}_{c,r}(n + k, k)\), by dropping the non-negative terms \(\bar{e}(A)\) and \(\bar{\beta}(S)\) while noting that \(\bar{e}(S) \le \binom{k}{r}\), \(\sum_A a_A = n\), and \(|\mathcal{S}| = |\mathcal{A}| - 1 \le n\), we obtain
\[
f_{c,r}(n + k, k)
\le \sum_{A \in \mathcal{A}} \binom{a_A}{r} + \sum_{S \in \mathcal{S}} \binom{k}{r}
\le n \binom{ck}{r} + n \binom{k}{r}.
\]
Consequently
\[
a_n = \frac{r!}{n k^{r-1}}\,f_{c,r}(n + k, k)
\le \frac{r!}{k^{r-1}} \Bigl( \binom{ck}{r} + \binom{k}{r} \Bigr),
\]
which is a constant depending only on \(c, k, r\) and satisfies condition (1).

Second, we verify condition (2). Note that \(n a_n\) is proportional to \(f_{c,r}(n + k, k)\); hence it suffices to show that \(f_{c,r}(n + k, k)\) is increasing in \(n\).

Let \(H \in \mathcal{F}_{c,r}(n + k, k)\) be a hypergraph that attains the maximum number of edges.
Then \(f_{c,r}(n+k,k) = e(H) - \binom{n+k}{r}+\binom{n}{r}\). 
Let \(S\) be a subset of \(V(H)\) of size \(k\) and let \(H'\) be the hypergraph obtained from \(H\) by adding a new vertex \(v\) together with all edges in 
$V(H) \cup \{ v  \}  $
that intersect both \(v\) and \(S\). Then \(H' \in \mathcal{F}_{c,r}(n + 1, k)\) and
\[
e(H') = e(H) + \binom{|V(H)\cup \{v\}|}{r} - \binom{|V(H)|}{r} - \binom{|(V(H)\cup \{v\})\setminus S|}{r} + \binom{|V(H)\setminus S|}{r}.
\]
Consequently,
\[
f_{c,r}(n+k+1,k)\geq e(H') -\binom{n+k+1}{r}+\binom{n+1}{r} = e(H) - \binom{n+k}{r}+\binom{n}{r} = f_{c,r}(n+k,k),
\]
which satisfies condition (2).

Next, we verify condition (3). It suffices to show that \(f_{c,r}(mn + k, k) \ge m \cdot f_{c,r}(n + k, k)\) for all \(m, n \in \mathbb{N}^*\).

Again, let \(H \in \mathcal{F}_{c,r}(n + k, k)\) be a hypergraph that attains the maximum number of edges and let \(S\) be a \(k\)-subset of \(V(H)\).
Construct a hypergraph \(H_m\) as follows:
\begin{itemize}
\item \(V=V(H_m) = V_1 \sqcup V_2 \sqcup \cdots \sqcup V_m \sqcup S\);
\item For each \(i = 1, 2, \ldots, m\), the induced subgraph \(H_m[V_i \cup S]\) is a copy of \(H\);
\item For any \(r\)-subset \(e\) that intersects at least two distinct \(V_i\)'s, \(e\in E(H_m)\) if and only if it intersects \(S\).
\end{itemize}
Then \(H_m\in\mathcal{F}_{c,r}(mn+k,k)\) and
\begin{equation}\label{eq:edge_gluing}
e(H_m) = m\cdot e(H) - (m-1)\cdot e(S) + \binom{|V|}{r} - \binom{|V\setminus S|}{r} -\binom{|S|}{r} - \sum_{i=1}^m\left( \binom{|V_i\cup S|}{r} - \binom{|V_i|}{r}-\binom{|S|}{r}\right).
\end{equation}
This yields
\[
f_{c,r}(mn+k,k)\geq e(H_m) -\binom{mn+k}{r}+\binom{mn}{r} = m\cdot\left( e(H)-\binom{n+k}{r}+\binom{n}{r}\right)+(m-1)\left(\binom{k}{r}-e(S)\right).
\]
Note that \(e(S)\leq \binom{k}{r}\). Consequently,
\[
f_{c,r}(mn+k,k)\geq m \cdot f_{c,r}(n+k,k),
\]
which satisfies condition (3) and completes the proof.
\end{proof}

Finally, we establish Theorem~\ref{main} and the upper bound in Theorem~\ref{0}.

\begin{proof}[\bf Proof of Theorem~\ref{main} and the upper bound in Theorem~\ref{0}]
Let \(H\) be an \(n\)-vertex \(r\)-uniform hypergraph with no \((k+1)\)-connected subgraph, and let \(T_H\) be a separator tree of \(H\) with associated abstract tree \(\widehat{T_H}\).
By Corollary~\ref{cor:edge_abstract_tree}, we have \(e(H) \le e(\widehat{T_H})\), while
Theorem~\ref{tight} provides the two upper bounds for \(e(\widehat{T_H})\) in the two stated cases.
In the first case of Theorem~\ref{tight}, for \(n-k \ge \frac{2^{r}}{2^{r}-1}ck\) and \(k \ge r\), we obtain
\[
\begin{aligned}
g_{c,r}(n,k) 
&\le \frac{r!}{c k^{r}}\Bigl(\binom{ck}{r}+\binom{k}{r}-\frac{1}{2}\sum_{i=1}^{\infty}\binom{k/2^i}{r}\Bigr)-\frac{r!}{(n-k) k^{r-1}}\binom{k}{r}\\
&\le c^{r-1}+\frac{1}{2c}+\frac{1}{2c}\cdot\frac{r!}{k^r}\Bigl(\binom{k}{r}-\sum_{i=1}^{\infty}\binom{k/2^i}{r}\Bigr)-\frac{k}{n-k}.
\end{aligned}
\]
Note that  \(\frac{k^r}{r!} \ge \binom{k}{r}\) and \(\frac{r!}{k^r}\bigl(\frac{1}{c}\binom{ck}{r}-\frac{k}{n-k}\binom{k}{r}\bigr)\le \bigl(c^{r-1}-\frac{k}{n-k}\bigr)\frac{r!}{k^r}\binom{k}{r} \le c^{r-1}-\frac{k}{n-k}\) .

In the second case of Theorem~\ref{tight}, for sufficiently large \(n\), if \(c \ge (2r)^{1/(r-1)}\) and \(k \ge 2(r-1)\), we obtain
\[
g_{c,r}(n,k) \le c^{r-1}+\frac{1}{2c} \cdot \frac{r!}{k^r}\Bigl(\binom{k}{r}-\sum_{i=1}^{\infty}\binom{k/2^i}{r}\Bigr)-\frac{k}{n-k}.
\]
Thus to complete the proof of Theorem~\ref{main} and the upper bound in Theorem~\ref{0}, it suffices to prove
\begin{align}
\binom{k}{r}-\sum_{i=1}^{\infty}\binom{k/2^i}{r}\le \frac{k^r}{r!}\Bigl(1-\frac{1}{2^r-1}\Bigr). \tag{A.1}
\end{align}
We refer the reader to the Appendix for a complete proof of (A.1).
\end{proof}

\section{Construction for hypergraphs}\label{7}
In this section, we prove the lower bounds in Theorem~\ref{0}.
Define
\begin{equation}\label{eq:Nnkr}
N_{n,k,r} = \binom{n}{r} - \binom{n - k}{r} 
           +\frac{n-k}{ck}\binom{ck}{r} + \left( \frac{n - k}{ck} - 1 \right) \binom{k}{r} 
            - \frac{n - k}{2ck} \sum_{i = 0}^{\infty} \binom{k / 2^i}{r}    
\end{equation}
to be the upper bound of \(e(\widehat{T})\) in the second part of Theorem~\ref{tight}.


We first provide the desired lower bounds for the second part of Theorem~\ref{0}.

\begin{example}
\label{ex1}

Let \( k = 2^s r \), where \( s \ge 1 \) is an integer, and let \( n = k + \frac{2ck^2}{r} = k + 2^{s+1} ck \). We construct an \( r \)-graph \( G_{k,r} \) on \( n \) vertices with no \( (k+1) \)-connected subgraphs as follows.

Let \( H_0 \) be the complete \( r \)-graph on \( k+ck \) vertices for $c \ge 1$. For \( i \ge 1 \), define \( H_i \) recursively:
\begin{itemize}
    \item Take two disjoint copies of \( H_{i-1} \).
    \item In each copy, select a separator \( S_{i-1} \) of size \( k \) by taking exactly \( \frac{k}{2^{\,i-1}} \) vertices from each of the \( 2^{\,i-1} \) atoms of \( H_{i-1} \), with the requirement that none of these vertices has already been used in any previous separator.
    \item Glue the two \( H_{i-1} \) copies along \( S_{i-1} \).
\end{itemize}
We demonstrate that each time we glue $A$ and $B$ along their common separator $S$, bonded edges appear; that is, all edges have endpoints in each of the three parts $A \setminus S$, $B \setminus S$, and $S$. This rule also applies to the following example.
Since each atom has \( k+ck \ge 2k \) vertices, the separators can be chosen disjoint from all previously selected ones, and the number of free anti‑edges in \(S_{i}\) equals
$\binom{k}{r} - 2^{i} \cdot \binom{\frac{k}{2^{i}}}{r} , (i=0,1,\dots ,s).$
Finally, define \( G_{k,r} = H_{s+1} \).
Since each gluing step doubles the number of atoms, the hypergraph \( G_{k,r}\) has \( 2^{s+1} \) atoms and \(2^{s+1} ck + k = n\) vertices. 
For each \(0\le i\le s\), the gluing of two copies of \(H_i\) into \(H_{i+1}\) creates \(2^{s-i}\) separators at level \(i\). 
At each such separator, the \(k\) separator vertices are evenly distributed among \(2^i\) atoms in the corresponding small branch, so each class has size \(k/2^i\). 
Hence each separator at level \(i\) has \(\binom{k}{r}-2^i\binom{k/2^i}{r}\) free edges. 
Summing over all levels, we obtain
\[
\sum_{S} \bar{e}(S) = (2^{s+1} - 1)\binom{k}{r} - 2^{s} \sum_{i=0}^{s} \binom{k / 2^{i}}{r}.
\]
Moreover, since \(G_{k,r}\) has \(2^{s+1}\) atoms each of size \(ck + k\) and \(\bar{e}(A) = \bar{\beta}(S) = 0\) for every atom \(A\) and every separator \(S\), Lemma~\ref{lem:edgenum_septree} yields
\[
\begin{aligned}
e(G_{k,r}) 
&= \binom{n}{r} - \binom{n - k}{r} +2^{s+1}\binom{ck}{r}+ \left(2^{s+1} - 1\right)\binom{k}{r} 
   - 2^s \sum_{i = 0}^{s} \binom{k / 2^i}{r} \\[4pt]
&= \binom{n}{r} - \binom{n - k}{r} +\frac{n-k}{ck}\binom{ck}{r}+ \left( \frac{n - k}{ck} - 1 \right) \binom{k}{r} 
   - \frac{n - k}{2ck} \sum_{i = 0}^{\infty} \binom{k / 2^i}{r} = N_{n,k,r}.
\end{aligned}
\]
Note that \(H_{s+1}\) contains an independent set \(S\) of size \(k\), formed by selecting \(\frac{k}{2^{s+1}} = \frac{r}{2}\) vertices from each of the \(2^{s+1}\) atoms in \(H_{s+1}\). 
Then for each positive integer \(m\), we glue \(m\) copies of \(H_{s+1}\) on \(S\) and add all possible bonded edges to obtain a new hypergraph \(H_{s+1}^{(m)}\).
It is easy to verify that \(H_{s+1}^{(m)}\) has \(n_m = k + m \cdot \frac{2ck^2}{r}\) vertices and its number of edges attains the upper bound \(N_{n_m, k, r}\) in Theorem~\ref{tight}.
By Theorem~\ref{thm:limit}, \(g_{c,r}^k\) exists; therefore,
\begin{equation}\label{eq:gcrknm}
g_{c,r}^k = \lim_{m \to \infty} \left( e\bigl(H_{s+1}^{(m)}\bigr) - \binom{n_m}{r} + \binom{n_m - k}{r} \right) = \frac{r!}{k^{r-1}} \lim_{m \to \infty} \frac{r}{2m c k^2} \left( N_{n_m, k, r} - \binom{n_m}{r} + \binom{n_m - k}{r} \right).    
\end{equation}
This determines the precise value of \(g_{c,r}^{k}\) for \(k=2^s r\).
For \(2^{s-1}r < k < 2^s r\), we apply an analogous gluing procedure to construct \(H_0, H_1, \ldots, H_{s+1}\).
In the \(i\)-th layer, we choose the separator \(S_{i-1}\) by taking either \(\lfloor k/2^{i-1} \rfloor\) or \(\lceil k/2^{i-1} \rceil\) vertices from each of the \(2^{i-1}\) atoms of \(H_{i-1}\).
Consequently,
\[
\bar{e}(S_{i-1}) = \binom{k}{r} - 2^{i-1}\binom{k/2^{i-1}}{r} + O_r(k^{r-1}).
\]
Summing over all layers yields
\[
e(H_{s+1}) = N_{n,k,r} - O_r(k^{r-2}n).
\]
Since \(H_{s+1}\) contains an independent set of size \(k\), we can construct infinitely many hypergraphs whose edge number deviates from the upper bound by \(O(k^{r-2}n)\). 
This gives the \(O_r(1/k)\) estimate of \(g_{c,r}^k\) for general \(k\) and completes the proof of the second part of Theorem~\ref{0}.
\qed
\end{example}

Next, we construct examples of $r$-uniform hypergraphs that exceed the tight upper bound established for $r$-uniform hypergraphs with normal separator trees when $c=1$ (see Theorem~\ref{normal2}). 
Moreover, as $r\to\infty$, the number of edges in these constructions asymptotically attains the bound given in Theorem~\ref{0} for the case $c=1$.
These hypergraphs contain tiny atoms in their separator trees, which, in our view, indicates a significant difference between the graph case and the case $r\geq 3$.

\begin{example}\label{ex2}
When $c=1$, let \( k = 2^{s} r \) where \( s \ge 1 \) is an integer, and let
$n = k + \frac{2k^{2}}{r} + \frac{2k}{r}\cdot pk = k + 2^{s+1}(1+p)k$,
where \(pk\) is an integer and the parameter \(p\) will be specified later, with \(p \le 1\).
We construct an \(r\)-uniform hypergraph \(G_{k,r,p}'\) on \(n\) vertices that contains no \((k+1)\)-connected subgraph in 3 steps, as follows.

\begin{enumerate}
\item Let \(H_{0}\) be the complete \(r\)-uniform hypergraph on \(2k\) vertices and \(G_{0}\) the complete \(r\)-uniform hypergraph on \(k+pk\) vertices. Form \(H_{0}'\) by gluing \(H_{0}\) and \(G_{0}\) along a separator \(S\) of size \(k\). Since both \(H_{0}\) and \(G_{0}\) are complete, \(S\) is also complete.

\item Build \(H_{1}'\) by taking two disjoint copies of \(H_{0}'\) and gluing them along a separator \(S_{0}\) of size \(k\). Here \(S_{0}\) consists of \(pk\) vertices from \(G_{0}\setminus S\) and \(k-pk\) vertices from \(H_{0}\setminus S\). All anti‑edges in \(S_{0}\) are free; their number is \(\binom{k}{r} - \binom{k-pk}{r} - \binom{pk}{r}\). No edges exist between the two copies of \(S\) in \(H_{1}'\).

\item For \(i \ge 2\) define \(H_{i}'\) recursively: take two disjoint copies of \(H_{i-1}'\) and in each copy choose a separator \(S_{i-1}\) of size \(k\) that contains exactly \(\frac{k}{2^{\,i-1}}\) vertices from each of the \(2^{\,i-1}\) copies of \(S\) present in \(H_{i-1}'\). Glue the two copies along \(S_{i-1}\). Because \(S\) has \(k\) vertices, the separators \(S_{j}\;(j\in[s])\) can be chosen to be pairwise disjoint.
\end{enumerate}

Finally, set \(G_{k,r,p}' = H_{s+1}'\).  
Using Lemma~\ref{lem:edgenum_septree} again, we obtain that \(e(G'_{k,r,p})\) equals to
\begin{equation}\label{eq:e_G'_k,r,p}
\begin{aligned}
&\quad \  \binom{n}{r} - \binom{n-k}{r} + 2^{s+1}\binom{k}{r}+ 2^{s+1}\binom{pk}{r}+ \left(\Bigl(3\cdot2^s-1\Bigr)\binom{k}{r}
            - 2^s\sum_{i=0}^{\infty}\binom{k/2^{\,i}}{r}
            -2^{s}\left( \binom{k-pk}{r}+\binom{pk}{r} \right)\right)   \\
            &= \binom{n}{r} - \binom{n-k}{r} + \Bigl(\frac{5(n-k)}{2(1+p)k}-1\Bigr)\binom{k}{r} - \frac{n-k}{2(1+p)k}\binom{(1-p)k}{r}- \frac{n-k}{2(1+p)k}\sum_{i=0}^{\infty}\binom{k/2^{\,i}}{r} + \frac{n-k}{2(1+p)k}\binom{pk}{r}.
\end{aligned}
\end{equation}

Define
\[
M_{n,k,r} = \binom{n}{r} - \binom{n-k}{r} 
          + \Bigl( \frac{5(n-k)}{2k} - 1 \Bigr)\binom{k}{r}  
          - \frac{n-k}{2k} \sum_{i=0}^{\infty} \binom{k/2^{\,i}}{r}.
\]
be the upper bound of \(e(\widehat{T})\) in the first part of Theorem~\ref{tight}.
If there exists an $r$-uniform hypergraph $H$ such that
$e(H) \ge M_{n,k,r} - o_r(1) \cdot \frac{k^{r-1}}{r!} (n-k),$
then $g_{1,r}^k \ge 2 - o_{k,r}(1)$. We shall show that $G_{k,r,p}'$ asymptotically meets this bound as $r\to\infty$.
By \eqref{eq:e_G'_k,r,p}, we have
$$
e(G_{k,r,p}') - M_{n,k,r}
= -\frac{p}{1+p}\left(\frac{5(n-k)}{2k}\binom{k}{r} - \frac{n-k}{2k}\sum_{i=0}^\infty \binom{k/2^i}{r}\right)
 -\frac{1}{1+p}\frac{n-k}{2k}\binom{(1-p)k}{r}
+ \frac{1}{1+p}\frac{n-k}{2k}\binom{pk}{r}.$$
If we further choose \(p\) in the interval \(\bigl(1-(\frac{2}{r})^{1/r},\;1-(\frac{1}{r})^{1/r}\bigr)\) with \(pk\) an integer, then
\(\frac{p}{1+p}\leq p \leq 1-(\frac{1}{r})^{\frac{1}{r}} \leq 1- \exp (-\frac{\log r}{r})\leq\frac{\log r}{r}\to 0\) and \(\binom{(1-p)k}{r}\leq (1-p)^r \binom{k}{r}\leq \frac{2}{r}\cdot \frac{k^r}{r!}\). 
Hence
\[
e(G_{k,r,p}') - M_{n,k,r} \ge -O\Bigl(\frac{\log r}{r}\Bigr)\cdot \frac{k^{r-1}}{r!}\cdot(n-k).
\]
This shows that \(G_{k,r,p}'\) asymptotically attains \(M_{n,k,r}\). 
Since \(G_{k,r,p}'\) contains an independent set of size \(k\), by an argument analogous to that in Example~\ref{ex1}, for infinitely many $n$, we can construct hypergraphs whose number of edges asymptotically attains the upper bound \(M_{n,k,r}\).
Therefore, we obtain \(g^k_{1,r} \ge 2 - o_r(1)\), which completes the proof of the first part of Theorem~\ref{0}. 
\qed
\end{example}


\section{Concluding remarks}\label{sec:remarks}

In this paper, we study hypergraph analogues of Conjecture~\ref{conj:graph} and address a related question of Carmesin~\cite{3}.
We establish a general upper bound on the number of edges in an $r$-uniform hypergraph that contains no $(k+1)$-connected subgraph on more than $ck+k$ vertices, for any $c\ge 1$ and integers $k \ge r \ge 3$.
A key innovation in our proof is the estimation of the number of anti-edges destroyed after deleting all tiny atoms, which differs substantially from the approach used in the graph case in~\cite{3}.
We also present explicit constructions that asymptotically attain the upper bound when \(c=1\) and \(r\to\infty\), and that attain the upper bound when \(c>1\) and $r$ is large (more precisely, under the conditions \(c\geq (2r)^{1/(r-1)}\) and \(k \ge 2(r-1)\)).

We highlight two significant differences between the hypergraph case \(r \ge 3\) and the graph case \(r=2\). 
First, bonded edges appear, a phenomenon absent from Carmesin's original framework. 
Second, and more importantly, the structure of extremal hypergraphs is substantially different. 
As Example~\ref{ex2} illustrates, for \(c=1\) and every \(r \ge 4\), any \(r\)-uniform hypergraph with a normal separator tree is not extremal, whereas Carmesin's extremal graph \cite{3} must have a normal separator tree.

We now explain why the cut-off at \(2k\) vertices is a crucial threshold for Mader's conjecture and its hypergraph analogues. 
Both Mader's extremal graph \(G_{q,k}\) and the extremal graph in Carmesin~\cite{3} share the property that all atoms in their separator trees have size \(2k\) and are isomorphic. 
Other extremal hypergraphs with the same property can also be constructed, and these constructions extend naturally to extremal hypergraphs. 
This indicates that characterizing all extremal (hyper)graphs requires a base (hyper)graph \(H_0\) of size \(2k\) and a systematic method to glue copies of \(H_0\) together, making \(2k\) a key threshold for capturing the essential structure of the problem.

In our main results, we only consider the case \(k \ge r\). The case \(k < r\) is straightforward: a simple induction shows that
\[
e(H) \le \binom{n}{r} - \binom{n-k}{r}
\]
for any \(n\)-vertex \(r\)-uniform hypergraph \(H\) with no \((k+1)\)-connected subgraph and \(n \ge k+r-1\), since no edge can be entirely contained in a \(k\)-vertex separator. Moreover, this bound is tight, as witnessed by \(G_{q,k}\). Hence, in this case, the maximum number of edges is precisely \(\binom{n}{r} - \binom{n-k}{r}\).

In the second item of Theorem~\ref{0}, our techniques suffice in principle to obtain a tight upper bound for all $k\gg r\ge 3$, without requiring $c\ge(2r)^{1/(r-1)}$. The main difficulty lies in the complex calculation.

Finally, we conclude the paper with the following hypergraph formulation of Mader's Conjecture:
\begin{conjecture}[See Conjecture~2 in \cite{15}]\label{mader}
Let \(H\) be an \(r\)-uniform hypergraph on \(n\) vertices containing no \((k+1)\)-connected subgraph. Then for all sufficiently large \(n\),
\[
e(H) \le \binom{n}{r} - \binom{n-k}{r} 
        + \left(\frac{n}{k} - 2\right)\binom{k}{r}.
\]
\end{conjecture}
\noindent 
We can construct an \(n\)-vertex \(r\)-uniform hypergraph \(G_{q,k}^r\) achieving this bound. Let \(k,q\) be integers with \(n = kq\), and partition the vertex set into \(V_0, V_1, \dots, V_{q-1}\), each of size \(k\). The hypergraph \(G_{q,k}^r\) contains all \(r\)-subsets except those that are completely contained in \(V_0\), and except those that have no vertex in \(V_0\) but are not entirely contained in any single \(V_i\) with \(i\ge 1\). It is easy to verify that \(G_{q,k}^r\) has no \((k+1)\)-connected subgraph.
This is equivalent to asserting that, for sufficiently large $n$,
\(F_r(n,k)=\binom{n}{r}-\binom{n-k}{r}+\left(\frac{n}{k}-2\right)\binom{k}{r}.\)
Note that the leading term of $F_r(n,k)$ was determined in Theorem~\ref{thm:limit} (e.g. \eqref{equ:F_r}).

\section*{Acknowledgement}

This work is supported by National Key Research and Development Program of China 2023YFA1010201, National Natural Science Foundation of China grant 12125106, and Innovation Program for Quantum Science and Technology 2021ZD0302902. We thank Yupeng Lin for valuable discussions.

\section*{Appendix}

\subsection*{Proof of (A.1)}

\[
\binom{k}{r}-\sum_{i=1}^{\infty}\binom{k/2^i}{r}\le\frac{k^r}{r!}\Bigl(1-\frac1{2^r-1}\Bigr). \tag{A.1}
\]
\begin{proof}
Define $f(x)=\dfrac{x^r}{r!}-\dbinom{x}{r}\;(x>0)$. Then $f(x)\ge0$ and (A.1) is equivalent to
\begin{equation}\label{appendix6}
f(k)\ge\sum_{i=1}^{\infty}f(k/2^i). 
\end{equation}

We prove $f(2x)\ge2f(x)$ for all $x>0$.
We consider three cases.

\textbf{Case 1: $2x\le r-1$.} Then $\binom{2x}{r}=\binom{x}{r}=0$, so
$f(2x)-2f(x)=\dfrac{(2^r-2)x^r}{r!}\ge0$.

\textbf{Case 2: $x\ge r-1$.} Then $2x\ge r-1$. The Vandermonde identity holds as a polynomial identity in $x$; therefore,
\[
\binom{2x}{r}=\sum_{j=0}^r\binom{x}{j}\binom{x}{r-j}.
\]
Hence
\[
f(2x)-2f(x)=\frac{(2^r-2)x^r}{r!}-\sum_{j=1}^{r-1}\binom{x}{j}\binom{x}{r-j}
\ge\frac{(2^r-2)x^r}{r!}-\frac{x^r}{r!}\sum_{j=1}^{r-1}\binom{r}{j}=0.
\]

\textbf{Case 3: $(r-1)/2<x<r-1$.} Here $\binom{x}{r}=0$ and $\binom{2x}{r}\le\frac{(2x)^{r-1}(2x-(r-1))}{r!}$. The desired inequality $f(2x)\ge2f(x)$ reduces to
\[
(2^r-2)x^r\ge (2x)^{r-1}\bigl(2x-(r-1)\bigr).
\]
Set $t=x/(r-1)\in(\frac12,1)$. The inequality becomes
\[
(2^r-2)t\ge2^{r-1}(2t-1),
\]
which holds because $t/(2t-1)\ge1$ and $r\ge 3$.

From $f(2x)\ge2f(x)$ we obtain $f(x/2)\le f(x)/2$. Iterating gives $f(k/2^i)\le f(k)/2^i$ for all $i\ge1$. Summing,
\[
\sum_{i=1}^{\infty}f(k/2^i)\le f(k)\sum_{i=1}^{\infty}\frac1{2^i}=f(k),
\]
which is exactly \eqref{appendix6}. Substituting back $f(k)=k^r/r!-\binom{k}{r}$ and $f(k/2^i)=(k/2^i)^r/r!-\binom{k/2^i}{r}$ yields (A.1). 
\end{proof}

\medskip

\textit{E-mail address:} jiema@ustc.edu.cn (Jie Ma)

\medskip

\textit{E-mail address:} jeff\_776532@mail.ustc.edu.cn (Shengjie Xie)

\medskip

\textit{E-mail address:} lhotse@mail.ustc.edu.cn (Zhiheng Zheng)
\end{document}